\newtheorem{theorem}{Theorem}[section]
\theoremstyle{plain}
\newtheorem{corollary}[theorem]{Corollary}
\newtheorem{definition}[theorem]{Definition}
\newtheorem{proposition}[theorem]{Proposition}
\newtheorem{remark}[theorem]{Remark}
\numberwithin{equation}{section}
\title{Lane--Emden Systems with Singular Nonlinearities for the Fully Nonlinear Elliptic Operator}
\author{Karan Rathore}
\address[Karan Rathore]{Visvesvaraya National Institute Of Technology, India-440010}
\email{ds23mth004@students.vnit.ac.in}
\author{Mohan Mallick}
\address[Mohan Mallick]{Visvesvaraya National Institute Of Technology, India-440010} 
\email{mohan.math09@gmail.com, mohanmallick@mth.vnit.ac.in}
\author{Ram Baran Verma}
\address[Ram Baran Verma]{Department of Mathematics, SRM University Amaravati, Andhra Pradesh-522502, India} \email{rambaran.v@srmap.edu.in}
\subjclass[2010]{Primary 35J60, 35D40.}
\keywords{Lane–Emden equation; Fully nonlinear Elliptic system; Negative exponent; Boundary behavior}
\begin{document}
\begin{abstract}
\noindent Consider
\[
\begin{cases}
F(D^2 u,Du,u,x) = u^{-p}v^{-q},~\text{in}~\Omega\\
F(D^2 v,Dv,v,x)=u^{-r}v^{-s},~~\text{in}~~\Omega\\
u,v>0~~\text{in}~~\Omega\\
u=v=0~\quad~\text{on}~~\partial\Omega,
\end{cases}
\]
where $\Omega$ is an open connected subset of $\mathbb{R}^{N}$ and $p,s$ are two non-negative and $q,r$ are positive real numbers. This article discuses the conditions in terms of the relations among $p,q,r$ and $s$ which lead to existence, uniqueness and non-existence of positive solutions to the system. Furthermore, we also have studied some regularity properties of solution of the system. These results are inspired by the study of Lane-Emden system of equations as in \cite{busca2002liouville,ghergu2010lane}.
\end{abstract}
\maketitle
\vspace{0cm}
\section{Introduction}
This article is concerned with the following system of equations
\begin{equation}\label{main}
\left\{
\begin{aligned}
F(D^2u,Du,u,x) &= u^{-p}v^{-q} && \text{in } \Omega,\\
F(D^2v,Dv,v,x) &= u^{-r}v^{-s} && \text{in } \Omega,\\
u = v &= 0 && \text{on } \partial\Omega,
\end{aligned}
\right.
\end{equation}
where $\Omega \subset \mathbb{R}^N$ is a bounded domain with $C^{2}$ boundary, $p,s \geq 0$, and $q,r>0$. We discus various criterion in terms of relations among the exponents $p,q,r$ and $s,$ which lead to existence, uniqueness and nonexistence of  solutions to system \eqref{main}. In order discuss the context of our work among the existence work, let us consider the following problem:
\begin{equation}\label{supp}
\left\{
\begin{aligned}
a_{ij}D_{ij}u&= u^{\tilde{p}} && \text{in } \Omega,\\
u&=0~~&&\text{on}~\partial\Omega,
\end{aligned}
\right.
\end{equation}
where $1\leq \tilde{p}.$ This problem in the literature is called Lane–Emden equation. There are various mathematical models which lead to   
Lane–Emden system of equations see\cite{chandrasekhar1957introduction,quittner2007superlinear,vazquez2006porous}. The problem \eqref{supp} has non-variational structure and the usual approach to study the existence of positive solution via topological degree. One of the crucial steps in this approach is to obtain uniform a prior estimate for positive solutions. B. Gidas and J. Spruck develop a technique to obtain the uniform bound by blow up technique \cite{Gidas01011981,gidas1981global}. It requires the Liouville type theorem which gives the "non-existence" of non-trivial solution of $\Delta u= u^{\tilde{p}}$ in $\mathbb{R}^{N}.$ To derive analogous estimates in nonconvex smooth domains, it is necessary to identify the range of $\tilde{p}$ for which $\Delta u= u^{\tilde{p}}$ admits no nonnegative solutions in the half-space $\mathbb{R}_+^N$, except $u\equiv0$. In the context of fully nonlinear elliptic equations, this type problem with $\tilde{p}>0$ has been studied in \cite{A,AB2,AF1}. This result has been extended for various class of operators see for example\cite{quaas2006existence,AB2,de2014priori,A,AF1,de2014liouville}. For more results in this directions see section 4\cite{tyagi2016survey}. Similar results for the system of uniformly elliptic equations has been obtained in \cite{busca2002liouville,quaas2009existence,clkment2014positive,serrin1996non}.\\
The system of equation \eqref{main} with $F=-\Delta,$ $p,s<0$ and $q,r\leq0$ has received considerable attention and investigation does not concern only the existence of solution but also many qualitative properties of solution by using moving plane method and Pohozaev-type identities etc,\cite{de2013liouville,de2014liouville,busca2002liouville,clement2000existence,zou2002priori,serrin1996non,naito2006existence,souplet2009proof,reichel2000non,quittner2004priori}. The system of equations \eqref{main} with Laplace operator and $p,q<0<r,s$ corresponds to the singular Gierer–Meinhardt system arising in molecular biology\cite{ghergu2009steady,ghergu2007class,ghergu2008singular}. The analysis of the system of equation in this case has been studied in \cite{hernandez2008positive}, where authors use monotone iteration to establish the existence of positive solution.\\
Later on the Problem \eqref{main} in the context of Laplace operator has been studied in \cite{ghergu2010lane}. In his study author establish many criterion in terms of the relation among the exponents on the nonlinear term which leads to existence, uniqueness and non-existence of solution. In accomplishing these results author frequently constructed appropriate sub and supersolutions of auxiliary equations. These sub and supersolutions are in terms of eigen functions of associated problems. In this work, we extend the results in \cite{ghergu2010lane} to the fully nonlinear setting, where the Laplacian is replaced by a general operators $F$ which involves dependent variable and its gradient. Moreover, these operators are of non-divergence form in nature unlike the Laplace operator which has both non variational as well variational. Main results of this article heavily relies on the boundary behaviour of solutions to the system \eqref{main}. These results have been deduced as a consequence of boundary H\"{o}pf Lemma and the Proposition \eqref{prop4}. This proposition deals with the boundary behaviour of solution of \eqref{eq2} which is proved by constructing appropriate sub and supersolution. These sub and supersolutions are obtained as a function(which is a solution of ordinary differential equation \eqref{H}) of eigen function in \eqref{eq:eigen}. Finally we use the properties of solution of \eqref{H} and the fact that eigenfunction is comparable with distance function \eqref{ineigen}. At this point we would like to emphasize that the boundary behaviour of solution of \eqref{eq2} with the nonlinearity behaving like $\delta(x)^{q}u^{-p}$ in place of $\delta(x)^{-q}u^{-p}$ has been established in \cite{felmer2012existence} where authors constructed the barrier function as a appropriate function of distance function directly. For more general results in this direction we refer to \cite{mallick2025regularity}. On one hand these boundary behaviour of solution combined with a suitable improper integral gives the criterion of non-existence of solution to \eqref{main} see Theorem \eqref{nonexis1}. On the other hand under suitable criterion, by using the results of \eqref{prop4}, we define suitable cone in the function space and have used the Schauder fixed point theorem in this cone prove the existence of solution. Finally, in the same section we have used these results to prove the regularity of solution of system.\\

 This article is organized as follows. In Section~2, we introduce the assumptions and structural conditions imposed on the operator $F$. Section~3 is devoted to the construction of barrier functions and the derivation of boundary estimates. The main non-existence, existence, regularity and uniqueness results are established in Section~4.

\section{Preliminaries}

Given two positive numbers $\lambda \leq \Lambda$, we define the Pucci extremal operators as follows:
\begin{equation*}
\mathcal{P}_{\lambda,\Lambda}^+(M)
= \sup_{A \in [\lambda,\Lambda]} \bigl[-\operatorname{Tr}(AM)\bigr]
\quad \text{and} \quad
\mathcal{P}_{\lambda,\Lambda}^-(M)
= \inf_{A \in [\lambda,\Lambda]} \bigl[-\operatorname{Tr}(AM)\bigr],
\end{equation*}
where $[\lambda,\Lambda]$ denotes the set of real symmetric matrices whose eigenvalues lie in the interval $[\lambda,\Lambda]$.
These operators provide canonical examples of fully nonlinear uniformly elliptic operators.

This article deals with a more general class of fully nonlinear elliptic operators.
Let
\[
F : S(N) \times \mathbb{R}^{N} \times \mathbb{R} \times \Omega \to \mathbb{R}
\]
satisfy the following structural conditions:
\begin{equation}\label{sc}
\left\{
\begin{aligned}
(H1)\;& F(tM,tp,tr,x)=tF(M,p,r,x), \quad t\ge 0.\\
(H2)\;& |F(M,p,r,x)-F(M,p,r,y)|
\le C|x-y|^{\alpha}(1+|M|+|p|+|r|).\\
(H3)\;& F \text{ is convex in } (M,p,r).\\
(H4)\;& \mathcal{P}_{\lambda,\Lambda}^-(M-N)-\Gamma|p-q|-\gamma(r-s)^+ \\
&\le F(M,p,r,x)-F(N,q,s,x) \\
&\le \mathcal{P}_{\lambda,\Lambda}^+(M-N)+\Gamma|p-q|+\gamma(s-r)^- .
\end{aligned}
\right.
\end{equation}

This article concerns a decoupled system involving fully nonlinear elliptic operators.
Such operators are of non-divergence type in nature.
Therefore, the appropriate notion of solution for studying these problems is that of viscosity solutions.

The viscosity solution of the system
\begin{equation}\label{problem1}
\left\{
\begin{aligned}
F(D^2u_{1},Du_1,u_1,x)&=f_{1}(u_{1},u_{2}) \quad &&\text{in } \Omega,\\
F(D^2u_{2},Du_2,u_2,x)&=f_{2}(u_{1},u_{2}) \quad &&\text{in } \Omega,\\
u_{1}&=u_{2}=0 \quad &&\text{on } \partial\Omega,
\end{aligned}
\right.
\end{equation}
is defined as follows.
\begin{definition}[see \cite{MHP,ishii1991viscosity}]\label{viscosity}
A vector-valued function $\vec{u}=(u_{1},u_{2}) \in
C(\overline{\Omega}) \times C(\overline{\Omega})$ is called a viscosity
subsolution (resp., supersolution) of \eqref{problem1} if, for any
$\phi \in C^{2}(\Omega)$ such that $u_i-\phi$ (for some $i=1,2$) attains
a local maximum (resp., local minimum) at a point $x_{0} \in \Omega$,
the following holds:
\begin{equation*}
F(D^2\phi(x_{0}),D\phi(x_{0}),\phi(x_{0}),x_{0})
\le f_{i}(u_{1}(x_{0}),u_{2}(x_{0}))
\quad
\bigl(\text{resp., } \ge f_{i}(u_{1}(x_{0}),u_{2}(x_{0}))\bigr).
\end{equation*}
The function $\vec{u}$ is called a viscosity solution of \eqref{problem1}
if it is both a viscosity subsolution and a viscosity supersolution.
\end{definition}

Unless otherwise stated, by a solution we always mean a viscosity solution.
We recall the following theorem from \cite{armstrong2009principal}.
\subsection{Eigenvalue}

\begin{theorem}[Theorem 2.2, \cite{armstrong2009principal}]\label{distance}
Let $F$ satisfies the above assumptions. Then there exist $\varphi^{+}_{1},\varphi^{-}_{1} \in C^{1,\alpha}(\Omega)$
such that 
\begin{equation}\label{eq:eigen}
\left\{
\begin{aligned}
F\big(D^{2}\varphi^{+}_{1}, D\varphi^{+}_{1}, \varphi^{+}_{1}, x\big) 
   =& \mu^{+}_{1}(F,\Omega)\,\varphi^{+}_{1}~ \text{in} ~\Omega,\\
F\big(D^{2}\varphi^{-}_{1}, D\varphi^{-}_{1}, \varphi^{-}_{1}, x\big) 
   =& \mu^{-}_{1}(F,\Omega)\,\varphi^{-}_{1}~\text{in }~\Omega,\\
\varphi^{+}_{1} =& \varphi^{-}_{1} = 0~\text{on}~\partial\Omega.
\end{aligned}
\right.
\end{equation}
and $\varphi^{+}_{1}> 0\quad\text{and}~\quad \varphi^{-}_{1} < 0\quad \text{in}~\Omega.$ Moreover, the eigenvalue $\mu^{+}_{1}(F,\Omega)$ (resp. $\mu^{-}_{1}(F,\Omega)$) is unique in the sense that if $\mu$ is an eigenvalue of $F$ in $\Omega$ with a corresponding nonnegative (resp. nonpositive) eigenfunction, then 
\[ \mu = \mu^{+}_{1}(F,\Omega) \quad 
\big(\text{resp. } \mu = \mu^{-}_{1}(F,\Omega)\big).
\]
Additionally, it is also simple in the sense that if $\varphi \in C(\overline{\Omega})$ 
is a solution of \eqref{eq:eigen} with $\varphi$ in place of $\varphi^{+}_{1}$ (resp. $\varphi^{-}_{1}$), 
then $\varphi$ is a constant multiple of $\varphi^{+}_{1}$ (resp. $\varphi^{-}_{1}$).
\end{theorem}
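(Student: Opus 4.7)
The plan is to follow the Berestycki--Nirenberg--Varadhan variational characterization of the principal eigenvalue, adapted to fully nonlinear operators as in Quaas--Sirakov and Birindelli--Demengel. For the positive pair I would set
\[
\mu^{+}_{1}(F,\Omega) := \sup\bigl\{\mu \in \mathbb{R} : \exists\, \phi \in C(\overline{\Omega}),\ \phi>0 \text{ in } \Omega,\ F(D^{2}\phi,D\phi,\phi,x) \le \mu\phi \text{ in the viscosity sense}\bigr\},
\]
and show this sup is finite using the Alexandrov--Bakelman--Pucci estimate together with the uniform ellipticity encoded in (H4); the positive $1$-homogeneity (H1) guarantees that the admissible class is a cone, which is what justifies taking the supremum at all.

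Next I would show that for every $\mu < \mu^{+}_{1}(F,\Omega)$ the inhomogeneous problem $F(D^{2}u,Du,u,x) - \mu u = f$ with $f \le 0$ and zero Dirichlet data admits a nonnegative viscosity solution. This is obtained by a Perron argument comparing with the Pucci operators through (H4), together with the strong maximum principle. Taking a sequence $\mu_{k} \uparrow \mu^{+}_{1}$ with, say, $f_{k} \equiv -1$ and normalizing the resulting solutions $u_{k}$ by $\|u_{k}\|_{L^{\infty}}$, the Krylov--Safonov Harnack inequality together with Caffarelli's interior $C^{1,\alpha}$ estimates (available thanks to the convexity (H3) and the H\"older dependence (H2) in $x$) yield uniform $C^{1,\alpha}$ bounds up to $\partial\Omega$. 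Compactness then extracts a limit $\varphi^{+}_{1} \ge 0$ with $\|\varphi^{+}_{1}\|_{\infty}=1$, which by stability of viscosity solutions solves \eqref{eq:eigen} at the eigenvalue $\mu^{+}_{1}(F,\Omega)$; the boundary Hopf lemma for fully nonlinear operators then upgrades $\varphi^{+}_{1}\ge 0$ to $\varphi^{+}_{1}>0$ in $\Omega$.

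For simplicity and uniqueness of the eigenvalue I would compare any other nonnegative eigenpair $(\mu,\varphi)$ with $(\mu^{+}_{1},\varphi^{+}_{1})$ through the ratio $\varphi/\varphi^{+}_{1}$. The convexity (H3) combined with homogeneity (H1) yields a linearized differential inequality satisfied by this ratio, to which the strong maximum principle applies and forces the ratio to be constant; an ABP-type comparison then rules out $\mu\neq\mu^{+}_{1}$. The negative pair $(\mu^{-}_{1},\varphi^{-}_{1})$ follows by running the same construction for the conjugate operator $\widetilde F(M,p,r,x) := -F(-M,-p,-r,x)$, which still satisfies (H1)--(H4) because the Pucci inequalities in (H4) are invariant under the transformation $M \mapsto -M$, $p \mapsto -p$, $r\mapsto -r$.

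The principal technical obstacle is the global $C^{1,\alpha}$ regularity needed both to compactify the approximating sequence and to justify the Hopf-type lower bound that gives strict positivity up to the boundary. This rests on the global $C^{1,\alpha}$ theory for convex fully nonlinear operators (Caffarelli, Swiech, Winter), and it is precisely here that the convexity (H3) and the H\"older dependence (H2) play their essential role; without both, the stability and simplicity arguments above would break down.
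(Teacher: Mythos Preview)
The paper does not prove this theorem at all: it is simply quoted from \cite{armstrong2009principal} (as the label ``Theorem 2.2, \cite{armstrong2009principal}'' indicates) and used as a black box throughout the rest of the article. There is therefore no ``paper's own proof'' to compare your proposal against.

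That said, your sketch is a faithful outline of the standard Berestycki--Nirenberg--Varadhan/Quaas--Sirakov approach that Armstrong's paper itself follows, so as a reconstruction of the cited result it is on the right track. One small caution: in your simplicity argument you appeal to a ``linearized differential inequality satisfied by the ratio $\varphi/\varphi^{+}_{1}$,'' but in the viscosity framework this quotient is not known to satisfy a useful PDE directly. The actual argument in Armstrong (and in the references you name) instead works with the difference $\varphi - t\varphi^{+}_{1}$ for the largest $t$ keeping it nonnegative, and then applies the strong maximum principle and Hopf lemma to that difference; convexity (H3) and homogeneity (H1) are what make $F$ behave subadditively so that the difference is a supersolution of a Pucci-type inequality. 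Your proposal would benefit from making that step precise rather than invoking a ratio argument that is natural for linear operators but delicate here.
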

\begin{remark}
\begin{enumerate}
\item
As a consequence of the H\"{o}pf lemma, there exists a neighborhood
$N$ of $\partial\Omega$ such that
\begin{equation}\label{rem1}
|D\varphi^{+}_{1}(x)| \ge c
\quad \text{for all } x \in N \cap \Omega.
\end{equation}
For the proof of the H\"{o}pf lemma, see Appendix~A of \cite{armstrong2009principal}.

\item
Following the ideas of Theorem~4.9 in \cite{AB}, we obtain that the
eigenfunction $\varphi^{+}_{1}$ is Lipschitz continuous. Moreover, by
using the H\"{o}pf lemma, we can show that there exists a constant
$C>0$ such that
\begin{equation}\label{ineigen}
C\,\delta(x) \le \varphi^{+}_{1}(x)
\le \frac{1}{C}\,\delta(x)
\quad \text{for all } x \in \Omega.
\end{equation}
\end{enumerate}
\end{remark}
We state the following result, which will be used repeatedly throughout the paper.
Let $F$, $F_{1}$, and $F_{2}$ be nonlinear operators satisfying
\begin{equation}\label{eq:2.28}
F(M+N,\, p+q,\, z+w,\, x)
\leq F_{1}(M,p,z,x)+F_{2}(N,q,w,x)
\end{equation}
for all $M,N \in S^n$, $p,q \in \mathbb{R}^n$, $z,w \in \mathbb{R}$, and
$x \in \Omega$.

\begin{proposition}[Lemma~2.3 in \cite{armstrong2009principal}]\label{compute}
Let $F_{1}$, $F_{2}$, and $F$ be nonlinear operators satisfying
\textup{(H1)}, \textup{(H2)}, and \eqref{eq:2.28}.
Let $u \in C(\overline{\Omega})$ be a subsolution of
\begin{equation}\label{eq:2.29}
F_{1}(D^2u, Du, u, x)=f \quad \text{in } \Omega,
\end{equation}
and let $v \in C(\overline{\Omega})$ be a subsolution of
\begin{equation}\label{eq:2.30}
F_{2}(D^2v, Dv, v, x)=g \quad \text{in } \Omega,
\end{equation}
for some $f,g \in C(\Omega)$.
Then the function $w := u+v$ is a subsolution of
\begin{equation}\label{eq:2.31}
F(D^2w, Dw, w, x)=f+g \quad \text{in } \Omega.
\end{equation}
Likewise, if we reverse the inequality in \eqref{eq:2.28} and assume that
$u$ and $v$ are supersolutions of \eqref{eq:2.29} and \eqref{eq:2.30},
respectively, then $w$ is a supersolution of \eqref{eq:2.31}.
\end{proposition}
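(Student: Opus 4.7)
The plan is to prove the subsolution statement by a standard doubling--of--variables argument combined with the Crandall--Ishii theorem of sums, since $u+v$ need not be twice differentiable and the superjets of $u+v$ do not literally decompose as sums of superjets of $u$ and $v$. Fix $\phi\in C^{2}(\Omega)$ and assume $u+v-\phi$ has a strict local maximum at some $x_{0}\in\Omega$; strictness is free after perturbing $\phi$ by $|x-x_{0}|^{4}$. To pull the single test function $\phi$ into separate test functions for $u$ and $v$, I would penalise by
\[
\Phi_{\varepsilon}(x,y):=u(x)+v(y)-\phi\!\left(\tfrac{x+y}{2}\right)-\frac{1}{\varepsilon}|x-y|^{2}
\]
on a small closed ball around $(x_{0},x_{0})$ inside $\overline{\Omega}\times\overline{\Omega}$ and pick a maximiser $(x_{\varepsilon},y_{\varepsilon})$. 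Standard arguments yield $(x_{\varepsilon},y_{\varepsilon})\to(x_{0},x_{0})$ and $\varepsilon^{-1}|x_{\varepsilon}-y_{\varepsilon}|^{2}\to 0$.

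The theorem of sums then produces vectors $p_{\varepsilon},q_{\varepsilon}\in\mathbb{R}^{N}$ and symmetric matrices $X_{\varepsilon},Y_{\varepsilon}$ with $(p_{\varepsilon},X_{\varepsilon})\in\overline{J}^{2,+}u(x_{\varepsilon})$, $(q_{\varepsilon},Y_{\varepsilon})\in\overline{J}^{2,+}v(y_{\varepsilon})$ satisfying $p_{\varepsilon}+q_{\varepsilon}=D\phi(\tfrac{x_{\varepsilon}+y_{\varepsilon}}{2})$ and $X_{\varepsilon}+Y_{\varepsilon}\le D^{2}\phi(\tfrac{x_{\varepsilon}+y_{\varepsilon}}{2})+o(1)$ as $\varepsilon\to 0$. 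The subsolution characterisations of $u$ and $v$ give
\[
F_{1}(X_{\varepsilon},p_{\varepsilon},u(x_{\varepsilon}),x_{\varepsilon})\le f(x_{\varepsilon}),\qquad F_{2}(Y_{\varepsilon},q_{\varepsilon},v(y_{\varepsilon}),y_{\varepsilon})\le g(y_{\varepsilon}).
\]
Summing these and applying the structural hypothesis \eqref{eq:2.28} at the common base point $x_{0}$, using (H2) to absorb the error from shifting $x_{\varepsilon}$ and $y_{\varepsilon}$ to $x_{0}$, yields
\[
F\!\left(X_{\varepsilon}+Y_{\varepsilon},\,p_{\varepsilon}+q_{\varepsilon},\,u(x_{\varepsilon})+v(y_{\varepsilon}),\,x_{0}\right)\le f(x_{\varepsilon})+g(y_{\varepsilon})+o(1).
\]
Monotonicity of $F$ in the matrix variable (the upper bound in (H4) applied with $N=0$) then allows me to replace $X_{\varepsilon}+Y_{\varepsilon}$ by $D^{2}\phi(\tfrac{x_{\varepsilon}+y_{\varepsilon}}{2})+o(1)$, and passing $\varepsilon\to 0$ with the continuity of $f,g,\phi,D\phi,D^{2}\phi$ delivers $F(D^{2}\phi(x_{0}),D\phi(x_{0}),w(x_{0}),x_{0})\le f(x_{0})+g(x_{0})$, which is exactly the subsolution property of $w=u+v$ at $x_{0}$.

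The main obstacle is that \eqref{eq:2.28} holds at a common $x$ while the theorem of sums produces jets at two different points $x_{\varepsilon}\neq y_{\varepsilon}$; bridging this mismatch is exactly where the $\alpha$--H\"older bound (H2) is needed, combined with the a priori bounds on $|p_{\varepsilon}|,|q_{\varepsilon}|,\|X_{\varepsilon}\|,\|Y_{\varepsilon}\|$ that are inherent in the quadratic penalisation. The supersolution case is entirely symmetric: one either applies the argument above to $-u,-v$ under the reversed form of \eqref{eq:2.28}, or equivalently penalises with $+\varepsilon^{-1}|x-y|^{2}$ to obtain subjets $(p_{\varepsilon},X_{\varepsilon})\in\overline{J}^{2,-}u(x_{\varepsilon})$, $(q_{\varepsilon},Y_{\varepsilon})\in\overline{J}^{2,-}v(y_{\varepsilon})$ with $X_{\varepsilon}+Y_{\varepsilon}\ge D^{2}\phi(\tfrac{x_{\varepsilon}+y_{\varepsilon}}{2})+o(1)$, and reverses every inequality above.
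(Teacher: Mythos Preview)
The paper does not prove this proposition; it is quoted from \cite{armstrong2009principal} and used as a black box throughout. So there is no argument in the paper to compare yours against.

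That said, your sketch has a genuine gap at exactly the point you flag as the main obstacle. You assert ``a priori bounds on $|p_{\varepsilon}|,|q_{\varepsilon}|,\|X_{\varepsilon}\|,\|Y_{\varepsilon}\|$ that are inherent in the quadratic penalisation,'' but the theorem of sums only controls the block matrix $\operatorname{diag}(X_{\varepsilon},Y_{\varepsilon})$ by $A+\eta A^{2}$, where $A$ is the Hessian of the doubled test function and therefore contains the penalty term, whose norm is of order $\varepsilon^{-1}$. One obtains $X_{\varepsilon}+Y_{\varepsilon}\le D^{2}\phi+o(1)$ as you say, but individually only $\|X_{\varepsilon}\|,\|Y_{\varepsilon}\|\le C\varepsilon^{-1}$. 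Likewise $p_{\varepsilon}=\tfrac{1}{2}D\phi+\tfrac{2}{\varepsilon}(x_{\varepsilon}-y_{\varepsilon})$ and $q_{\varepsilon}=\tfrac{1}{2}D\phi-\tfrac{2}{\varepsilon}(x_{\varepsilon}-y_{\varepsilon})$ can each be large; only their sum is bounded. When you then invoke (H2) to move $F_{2}$ from $y_{\varepsilon}$ to a common base point, the error is of order $|x_{\varepsilon}-y_{\varepsilon}|^{\alpha}(1+\|Y_{\varepsilon}\|+|q_{\varepsilon}|)\sim |x_{\varepsilon}-y_{\varepsilon}|^{\alpha}\varepsilon^{-1}$, and since the penalisation gives only $|x_{\varepsilon}-y_{\varepsilon}|=o(\sqrt{\varepsilon})$, this is $o(\varepsilon^{\alpha/2-1})$ and need not vanish.

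The standard remedy, and the one used in \cite{armstrong2009principal}, is to pass through sup-convolutions $u^{\varepsilon},v^{\varepsilon}$. These are semiconvex and remain viscosity subsolutions of slightly perturbed equations on slightly smaller domains; here the (H2) error is harmless because one tests with a single fixed $C^{2}$ function whose Hessian is bounded independently of $\varepsilon$. By Alexandrov's theorem $u^{\varepsilon}$ and $v^{\varepsilon}$ are twice differentiable a.e., and at any common point of twice differentiability one applies \eqref{eq:2.28} directly, with no base-point shift. Since $u^{\varepsilon}+v^{\varepsilon}$ is itself semiconvex, an a.e.\ classical subsolution is automatically a viscosity subsolution, and stability as $\varepsilon\to0$ finishes the argument.
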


We will use the following results concerning the comparison principle.
\begin{proposition}\label{prop1}
Let $p \ge 0$ and let $\phi : \Omega \to (0,\infty)$ be a continuous function.
If $\underline{u}$ is a subsolution and $\overline{u}$ is a supersolution of
\begin{equation*}
\left\{
\begin{aligned}
F(D^2u,Du,u,x) &= \phi(x)u^{-p} && \text{in } \Omega,\\
u &= 0 && \text{on } \partial\Omega,
\end{aligned}
\right.
\end{equation*}
then
\[
\underline{u} \le \overline{u} \quad \text{in } \Omega.
\]
\end{proposition}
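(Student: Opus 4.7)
The plan is to exploit the fact that the right-hand side $\phi(x)u^{-p}$ is non-increasing in $u>0$ (since $p\ge 0$) by combining a scaling perturbation of $\underline u$ with a standard viscosity comparison argument.

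First, I would pass to a strict subsolution by defining, for $\epsilon\in(0,1)$, the rescaling $\underline u_\epsilon := (1-\epsilon)\underline u$. Using the $1$-homogeneity (H1) and the subsolution inequality,
\[
F(D^2 \underline u_\epsilon, D\underline u_\epsilon, \underline u_\epsilon, x)
= (1-\epsilon)\,F(D^2\underline u,D\underline u,\underline u,x)
\le (1-\epsilon)\phi(x)\underline u^{-p}
= (1-\epsilon)^{1+p}\phi(x)\underline u_\epsilon^{-p}.
\]
Because $p\ge 0$, the factor $(1-\epsilon)^{1+p}$ is strictly less than $1$, so $\underline u_\epsilon$ is a \emph{strict} viscosity subsolution of $F(D^2u,Du,u,x)=\phi u^{-p}$. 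Since $\underline u_\epsilon\to\underline u$ uniformly on $\overline\Omega$ as $\epsilon\to 0^+$, it suffices to prove $\underline u_\epsilon\le \overline u$ for every $\epsilon$ and then let $\epsilon\to 0^+$.

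Next, assume for contradiction that $w:=\underline u_\epsilon-\overline u$ attains a positive maximum. Since both functions vanish on $\partial\Omega$, this maximum is attained at some interior point $\hat x\in\Omega$. Following the Crandall--Ishii doubling of variables, I would introduce $\Phi_\alpha(x,y)=\underline u_\epsilon(x)-\overline u(y)-(\alpha/2)|x-y|^2$, whose maximizer $(x_\alpha,y_\alpha)$ converges to $\hat x$ while $\alpha|x_\alpha-y_\alpha|^2\to 0$. The theorem of sums furnishes matrices $X_\alpha, Y_\alpha$ with $X_\alpha - Y_\alpha\le 0$ and common first-order term $p_\alpha=\alpha(x_\alpha-y_\alpha)$, such that the sub/supersolution inequalities hold at $(x_\alpha,X_\alpha,p_\alpha,\underline u_\epsilon(x_\alpha))$ and $(y_\alpha,Y_\alpha,p_\alpha,\overline u(y_\alpha))$ respectively. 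Subtracting these, invoking (H4) and the spatial Hölder control (H2), I would obtain
\[
\mathcal P_{\lambda,\Lambda}^{-}(X_\alpha-Y_\alpha)-\gamma\,(\underline u_\epsilon(x_\alpha)-\overline u(y_\alpha))^{+}
\le (1-\epsilon)^{1+p}\phi(x_\alpha)\underline u_\epsilon(x_\alpha)^{-p}-\phi(y_\alpha)\overline u(y_\alpha)^{-p}+o(1).
\]
Since $X_\alpha - Y_\alpha\le 0$ we have $\mathcal P_{\lambda,\Lambda}^{-}(X_\alpha-Y_\alpha)\ge 0$, and in the limit $\alpha\to\infty$ the right-hand side converges to
\[
\phi(\hat x)\Bigl[(1-\epsilon)^{1+p}\underline u_\epsilon(\hat x)^{-p}-\overline u(\hat x)^{-p}\Bigr],
\]
which is strictly negative because $(1-\epsilon)^{1+p}<1$ and, at the interior maximum, $\underline u_\epsilon(\hat x)>\overline u(\hat x)$ together imply $(1-\epsilon)^{1+p}\underline u_\epsilon(\hat x)^{-p}<\overline u(\hat x)^{-p}$.

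The main obstacle is absorbing the term $\gamma\,w(\hat x)$ coming from the Lipschitz dependence of $F$ on its zeroth-order argument. To overcome this I would either apply the ABP estimate of \cite{armstrong2009principal} to $w$ on a small ball around $\hat x$, using the strict-inequality gap above to force a strict sign, or equivalently reduce $\epsilon$ so that the gap $(1-(1-\epsilon)^{1+p})\phi(\hat x)\underline u_\epsilon(\hat x)^{-p}$ dominates $\gamma\,w(\hat x)$. Once this gives the desired contradiction, the conclusion $\underline u_\epsilon\le \overline u$ in $\Omega$ follows, and letting $\epsilon\to 0^+$ yields $\underline u\le \overline u$ on $\Omega$, completing the proof.
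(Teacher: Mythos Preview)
The paper disposes of this in two lines: hypothesis (H4) makes $F$ proper (nondecreasing in its zeroth-order argument), and since $u\mapsto -\phi(x)u^{-p}$ is also nondecreasing for $p\ge 0$, the full operator $(M,p,r,x)\mapsto F(M,p,r,x)-\phi(x)r^{-p}$ is proper; the comparison principle then follows directly from Theorem~3.3 in \cite{crandall1992user}. Your proposal instead redoes the Crandall--Ishii doubling argument by hand, with an $\epsilon$-rescaling to manufacture a strict subsolution.

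Your treatment of the zeroth-order term has a genuine gap. The fix ``reduce $\epsilon$ so that the strict gap dominates $\gamma w(\hat x)$'' cannot work: as $\epsilon\to 0^+$ the quantity $\bigl(1-(1-\epsilon)^{1+p}\bigr)\phi(\hat x)\,\underline u_\epsilon(\hat x)^{-p}$ tends to zero, while under the contradiction hypothesis $w(\hat x)=\max_\Omega\bigl((1-\epsilon)\underline u-\overline u\bigr)$ stays bounded below by a fixed positive constant. The ABP-on-a-small-ball alternative is left unspecified, and you have no control of $w$ along the boundary of such a ball. The point you are missing is precisely properness: if $F$ is nondecreasing in $u$, then at the doubled maximum one may replace $\underline u_\epsilon(x_\alpha)$ by $\overline u(y_\alpha)$ inside $F$ at no cost, and the troublesome $-\gamma w$ term never appears. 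Once you use this, the $\epsilon$-scaling and the whole doubling machinery become unnecessary---one simply cites the standard comparison theorem, as the paper does.
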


\begin{proof}
We observe that assumption \textup{(H4)} implies that $F$ is a proper elliptic
operator, that is, increasing. Therefore, it is easy to note that $F(M, p,., x)-\phi u^{-p}$ is also non-decreasing in $u$. So by Theorem 3.3 in \cite{crandall1992user}, comparison principle holds.

\end{proof}
In the next section, we derive lower bounds for subsolutions and supersolutions near the boundary using the above tools, and, when necessary, construct logarithmic-type barrier functions.
\section{Boundary Behavior and Singular Estimates}
In this section, we derive lower bounds for subsolutions and supersolutions of singular elliptic equations of the form \eqref{main}. These estimates are expressed in terms of the distance to the boundary, $\delta(x)=\operatorname{dist}(x,\partial\Omega)$, and capture the precise singular behavior near $\partial\Omega$.
\begin{proposition}\label{prop3}
Let $(u,v)$ be a solution of the system \eqref{main}. Then there exists a constant
$c>0$ such that
\begin{equation}\label{exeq}
u(x)\ge c\,\delta(x)
\quad \text{and} \quad
v(x)\ge c\,\delta(x)
\quad \text{in } \Omega.
\end{equation}
\end{proposition}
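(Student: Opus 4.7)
The plan is to reduce the statement to a direct application of the Hopf boundary lemma for the fully nonlinear operator $F$. The crucial observation is that once we drop information and only remember that the right-hand sides are nonnegative, the equations in \eqref{main} turn into the one-sided condition to which Hopf applies.

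Since $u,v>0$ in $\Omega$, the right-hand side $u^{-p}v^{-q}$ of the first equation is strictly positive in $\Omega$, and therefore $u\in C(\overline{\Omega})$ is a viscosity supersolution of the homogeneous inequality
\[
F(D^{2}u,Du,u,x)\ge 0 \quad \text{in }\Omega,
\]
with $u>0$ in $\Omega$ and $u=0$ on $\partial\Omega$. This is exactly the hypothesis of the Hopf lemma for fully nonlinear uniformly elliptic operators satisfying \textup{(H1)--(H4)}, a reference for which is Appendix~A of \cite{armstrong2009principal}. Since $\partial\Omega$ is $C^{2}$, the domain satisfies a uniform interior sphere condition; applying Hopf at every $x_{0}\in\partial\Omega$ with a sphere of fixed radius, and tracking that the barrier constant depends only on $\lambda,\Lambda,\Gamma,\gamma$ and this radius, gives a constant $c_{1}>0$ (independent of $x_{0}$) such that
\[
u(x_{0}-t\nu(x_{0}))\ge c_{1}t
\quad\text{for all }0<t<t_{0},
\]
where $\nu$ is the outward unit normal. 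In the tubular neighborhood $\{\delta(x)<\delta_{0}\}$ the distance $\delta(x)$ equals the normal distance to $\partial\Omega$, so this yields $u(x)\ge (c_{1}/2)\,\delta(x)$ there.

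The estimate is then extended to the whole of $\Omega$ by a trivial compactness argument: on the complementary compact set $K=\{x\in\overline{\Omega}:\delta(x)\ge\delta_{0}\}$ the continuous positive function $u$ has a positive minimum $m_{0}$, and $\delta(x)\le\operatorname{diam}(\Omega)$ on $K$, so $u\ge (m_{0}/\operatorname{diam}\Omega)\,\delta$ there. Choosing $c$ to be the smaller of the two constants gives $u\ge c\,\delta$ throughout $\Omega$. The second equation has a positive right-hand side $u^{-r}v^{-s}$ for the same reason, and an identical argument applied to $v$ produces the corresponding inequality $v\ge c\,\delta$. (Alternatively, one could use $\varepsilon\varphi_{1}^{+}$ as a barrier and invoke Proposition~\ref{prop1}, since by \textup{(H1)} the function $\varepsilon\varphi_{1}^{+}$ satisfies $F(D^{2}w,Dw,w,x)=\mu_{1}^{+}(F,\Omega)w$ and is comparable to $\delta$ by \eqref{ineigen}, but the direct Hopf route is cleanest.)

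The only place one might worry is that the right-hand side $u^{-p}v^{-q}$ blows up as $\delta\to 0$, which could a priori interfere with a barrier construction near the boundary. However, the Hopf argument uses only the one-sided inequality $F(D^{2}u,Du,u,x)\ge 0$ inside $\Omega$, which is automatic from the sign of the right-hand side, so the singularity plays no role and this is not an obstacle. Thus the entire proof reduces to invoking Hopf plus a compactness argument on the interior.
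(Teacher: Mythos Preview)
Your argument is correct and follows essentially the same route as the paper: both observe that the positivity of $u^{-p}v^{-q}$ reduces the problem to the one-sided inequality $F(D^{2}u,Du,u,x)\ge 0$ and then invoke the Hopf boundary lemma. The paper packages the Hopf step by citing the argument of Theorem~1.4 in \cite{quaas2008principal} to obtain $u\ge t\,\varphi_{1}^{+}$ and then uses \eqref{ineigen}, whereas you apply Hopf pointwise with a uniform interior-sphere radius and close with a compactness argument on the interior; these are two standard, equivalent ways of converting Hopf into a global $u\ge c\,\delta$ bound, and your parenthetical remark about comparing with $\varepsilon\varphi_{1}^{+}$ is precisely the paper's choice.
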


\begin{proof}
Observe that $F$ is convex in its first three arguments.
Therefore, we may follow the ideas of \cite{quaas2008principal}.
Since $(u,v)$ is a solution of the system \eqref{main}, we have
\begin{equation*}
\left\{
\begin{aligned}
F(D^2u(x),Du(x),u(x),x)
&= u^{-p}v^{-q} > 0
\quad \text{in } \Omega,\\
u &= 0
\quad \text{on } \partial\Omega.
\end{aligned}
\right.
\end{equation*}
By following the same argument as in Theorem~1.4 of
\cite{quaas2008principal}, we obtain
\[
u \equiv t\,\varphi^{+}_{1}
\quad \text{for some } t>0.
\]
Consequently, the first inequality in \eqref{exeq} follows from
\eqref{ineigen}. The second inequality can be obtained in a similar way.
\end{proof}
\begin{theorem}\label{th1}
Let $p \ge 0$, $\operatorname{diam}(\Omega) < A$, and let
$k : (0,A) \to (0,\infty)$ be a decreasing function such that
\[
\int_0^A t\,k(t)\,dt = \infty.
\]
Then the inequality
\begin{equation}\label{exstar}
\left\{
\begin{aligned}
F(D^2u,Du,u,x) &\ge k(\delta(x))u^{-p}
\quad \text{in } \Omega,\\
u &> 0 \quad \text{in } \Omega,\\
u &= 0 \quad \text{on } \partial\Omega,
\end{aligned}
\right.
\end{equation}
admits no solutions
\[
u \in C^2(\Omega)\cap C(\overline{\Omega}).
\]
\end{theorem}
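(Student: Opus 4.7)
The plan is to argue by contradiction using an explicit one-dimensional barrier constructed in a thin tubular neighborhood of $\partial\Omega$, together with the comparison principle for Pucci-type operators. Suppose such a $u$ exists. Since $u \in C(\overline{\Omega})$, the quantity $M := \|u\|_{L^{\infty}(\Omega)}$ is finite, so $u^{-p} \ge M^{-p}$ (interpreted as $1$ when $p = 0$), and the right-hand side is bounded below by $c_{0}\,k(\delta)$ with $c_{0} := M^{-p} > 0$. Applying (H4), using $F(0,0,0,x) = 0$ (from (H1) at $t = 0$) and the fact that $u \ge 0$ makes the $r^{-}$ term vanish, we deduce that $u$ is a viscosity supersolution of the Pucci-type inequality
\[
\mathcal{P}^{+}_{\lambda,\Lambda}(D^{2}v) + \Gamma|Dv| \ge c_{0}\,k(\delta(x)) \quad \text{in } \Omega,
\]
eliminating the $u$-dependence on the right.

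Next, I would fix $d_{0} > 0$ small enough that $\delta \in C^{2}(T_{d_{0}})$, where $T_{d_{0}} := \{x \in \Omega : \delta(x) < d_{0}\}$, and so that the sum of absolute principal curvatures of the level sets of $\delta$ is bounded by some $K_{0}$ on $T_{d_{0}}$. Setting $C_{1} := \Lambda K_{0} + \Gamma$ and $k_{n}(s) := \min(k(s), n)$, let $g_{n}$ solve the linear one-dimensional problem
\[
-\Lambda g_{n}''(s) + C_{1}\, g_{n}'(s) = c_{0}\, k_{n}(s) \text{ on } (0, d_{0}), \qquad g_{n}(0) = g_{n}(d_{0}) = 0.
\]
The Green's function representation $g_{n}(s) = c_{0}\int_{0}^{d_{0}} G(s,t)\, k_{n}(t)\, dt$ gives $g_{n} \ge 0$, and since $k_{n}$ is decreasing one verifies that $e^{-C_{1} s/\Lambda}\, g_{n}''(s)$ is non-decreasing while the ODE at $s = d_{0}$ forces $g_{n}''(d_{0}) < 0$; hence $g_{n}$ is concave. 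Define $\underline{w}_{n}(x) := g_{n}(\delta(x))$. Computing $D^{2}\underline{w}_{n} = g_{n}''(\delta)\, D\delta \otimes D\delta + g_{n}'(\delta)\, D^{2}\delta$, the eigenvalue structure (normal eigenvalue $g_{n}'' \le 0$, tangential eigenvalues $g_{n}'\kappa_{i}$) combined with the curvature bound yields the classical inequality
\[
\mathcal{P}^{+}_{\lambda,\Lambda}(D^{2}\underline{w}_{n}) + \Gamma|D\underline{w}_{n}| \le -\Lambda g_{n}''(\delta) + C_{1}\, g_{n}'(\delta) = c_{0}\, k_{n}(\delta) \le c_{0}\, k(\delta),
\]
so $\underline{w}_{n}$ is a classical (and hence viscosity) subsolution of the same Pucci-type equation in $T_{d_{0}}$.

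The comparison principle for Pucci operators with bounded right-hand side, together with the matching boundary data $\underline{w}_{n} = 0 = u$ on $\partial\Omega$ and $\underline{w}_{n} = g_{n}(d_{0}) = 0 \le u$ on the compact set $\{\delta = d_{0}\} \subset \Omega$ (where $u > 0$), then gives $u(x) \ge g_{n}(\delta(x))$ on $T_{d_{0}}$ for every $n$. The boundary asymptotics of the 1D Green's function yield $G(s,t) \sim C(s)\, t$ as $t \to 0^{+}$; since $k$ is decreasing, the divergence $\int_{0}^{A} t\, k(t)\, dt = \infty$ must originate at $t = 0$, so monotone convergence gives $g_{n}(s) \uparrow +\infty$ for every $s \in (0, d_{0})$. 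Sending $n \to \infty$ we conclude $u(x_{0}) = +\infty$ at any $x_{0} \in T_{d_{0}}$, contradicting $u \le M$. The hard part will be the one-dimensional analysis: calibrating $C_{1}$ so that the Pucci computation produces a clean inequality (which requires concavity of $g_{n}$, ultimately traceable to the monotonicity of $k$), and identifying the precise linear-in-$t$ boundary behaviour of $G$ that translates the hypothesis $\int_{0}^{A} t\, k(t)\, dt = \infty$ into blowup of the barrier. Once these one-dimensional facts are in hand, the multidimensional Pucci computation and the comparison step are essentially routine.
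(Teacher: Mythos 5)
Your barrier strategy is the correct general idea (and arguably more robust than the paper's own reduction to $\Lambda=\lambda=1$, $\Gamma=\gamma=0$: since $\mathcal{P}^+_{\lambda,\Lambda}(M)\ge -\lambda\operatorname{Tr}(M)$, a lower bound on $\mathcal{P}^+_{\lambda,\Lambda}(D^2u_0)$ does not by itself yield a lower bound on $-\Delta u_0$). But there is a concrete gap in the one-dimensional construction. With Dirichlet data $g_n(0)=g_n(d_0)=0$, the concavity you establish forces $g_n'$ to change sign on $(0,d_0)$ — indeed you already invoke $g_n'(d_0)\le 0$ to conclude $g_n''(d_0)<0$. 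The honest Pucci computation for $\underline{w}_n=g_n(\delta)$ gives
\[
\mathcal{P}^+_{\lambda,\Lambda}(D^2\underline{w}_n)+\Gamma|D\underline{w}_n|\;\le\;-\Lambda g_n''(\delta)+C_1\,|g_n'(\delta)|,
\]
with an absolute value, because the tangential eigenvalues $g_n'\mu_i$ of $D^2\underline{w}_n$ and the drift term $\Gamma|D\underline{w}_n|=\Gamma|g_n'|$ can each have either sign. On the set $(s_n^*,d_0)$ where $g_n'<0$ (here $s_n^*$ is the maximizer of $g_n$), this right-hand side strictly exceeds $-\Lambda g_n''+C_1 g_n'=c_0 k_n(\delta)$, so $\underline{w}_n$ is \emph{not} a subsolution there, and the comparison step on $T_{d_0}$ cannot be applied as stated. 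Worse, as $n\to\infty$ the truncated source $k_n$ concentrates near $t=0$, which pushes $s_n^*\to 0$, so the bad region eventually covers nearly all of $T_{d_0}$.

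Replacing the inner boundary condition by $g_n'(d_0)=0$ does force $g_n'\ge 0$ throughout, but then $g_n(d_0)\to\infty$ while $u$ is bounded, so comparison on $\{\delta=d_0\}$ breaks; and rescaling $g_n$ to fit under $u$ there ($g_n\mapsto m_0\,g_n/g_n(d_0)$) removes the divergence you need. Some further device — a cut-off combined with a quantitative Hopf-type lower bound for $u$ near $\partial\Omega$, or a Green's-function/weak-Harnack integral estimate for Pucci supersolutions — is required, and this is exactly where the one-dimensional analysis ceases to be routine. A minor separate point: the claim that $u\ge 0$ makes the $r$-term from (H4) vanish is off under the standard convention $(-u)^-=\max(u,0)=u>0$; however, the offending term contributes only $\gamma u\le\gamma\|u\|_{L^\infty(\Omega)}$, a constant that can be absorbed by replacing $c_0 k$ with $c_0 k-\gamma\|u\|_{L^\infty}$ (still with divergent integral), so this part is easily repaired once stated correctly.
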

\begin{proof}
Suppose, by contradiction, that there exists a solution $u_{0}$ of
\eqref{exstar}. Then, in view of the structural condition \textup{(H4)},
we obtain
\[
\mathcal{P}_{\lambda,\Lambda}^{+}(D^{2}u_{0})
+ \Gamma |Du_{0}|
+ \gamma (-u_{0})^{-}
\ge k(\delta(x))u_{0}^{-p}
\quad \text{in } \Omega.
\]
In particular, by taking $\Lambda=\lambda=1$ and $\Gamma=\gamma=0$, we see
that $u_{0}$ satisfies
\begin{equation}\label{exlap1}
-\Delta u_{0} \ge k(\delta(x))u_{0}^{-p}
\quad \text{in } \Omega.
\end{equation}
This contradicts Theorem~2.4 in \cite{ghergu2010lane}.
\end{proof}
Notice that for $l \ge 1$, the integral
\[
\int_{0}^{\infty} t^{-l}\,dt
\]
is infinite. Thus, if we choose $k(\delta(x))=\delta(x)^{-q}$ with $q \ge 2$
in the above theorem, we obtain the following corollary.

\begin{corollary}\label{coro}
Let $p \ge 0$ and $q \ge 2$. Then there exists no function
$u \in C^{2}(\Omega) \cap C(\overline{\Omega})$ such that
\begin{equation*}
\left\{
\begin{aligned}
F(D^2u,Du,u,x) &\ge \delta(x)^{-q}u^{-p}
&& \text{in } \Omega,\\
u &> 0
&& \text{in } \Omega,\\
u &= 0
&& \text{on } \partial\Omega.
\end{aligned}
\right.
\end{equation*}
\end{corollary}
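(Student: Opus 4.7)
The plan is to obtain this as an immediate consequence of Theorem~\ref{th1} by choosing the function $k$ appropriately. Specifically, I would set $k(t) = t^{-q}$ on $(0,A)$, where $A > \operatorname{diam}(\Omega)$, and verify that the two hypotheses of Theorem~\ref{th1} are met.

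First I would check that $k$ is decreasing: since $q \ge 2 > 0$, the function $t \mapsto t^{-q}$ is strictly decreasing on $(0,A)$ and takes values in $(0,\infty)$, so the monotonicity and positivity requirements are immediate. Next I would verify the divergence condition on $\int_0^A t\, k(t)\, dt$. Here $t k(t) = t^{1-q}$, and since $q \ge 2$ we have $1-q \le -1$, so the integral $\int_0^A t^{1-q}\,dt$ diverges at the origin (this is exactly the observation about $\int_0 t^{-l}\,dt$ with $l = q-1 \ge 1$ recorded in the remark preceding the corollary).

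With both hypotheses verified, I would apply Theorem~\ref{th1} directly: any $u \in C^2(\Omega)\cap C(\overline{\Omega})$ satisfying the conditions of the corollary would in particular be a solution of \eqref{exstar} with this choice of $k$, contradicting the non-existence statement of Theorem~\ref{th1}. Hence no such $u$ can exist.

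There is no real obstacle here—the corollary is essentially a verification that the power weight $\delta(x)^{-q}$ with $q \ge 2$ is singular enough near $\partial \Omega$ to fall within the scope of the general non-existence theorem. The only minor point to mention is that, although Theorem~\ref{th1} is stated for $k$ defined on $(0,A)$ with $\operatorname{diam}(\Omega) < A$, this is trivially arranged by taking $A$ to be any number larger than $\operatorname{diam}(\Omega)$, since the behaviour of $k$ near $t=0$ is all that matters for the divergence of $\int_0^A t\,k(t)\,dt$.
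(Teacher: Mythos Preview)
Your proposal is correct and follows exactly the paper's own approach: the paper simply observes, immediately before stating the corollary, that choosing $k(t)=t^{-q}$ in Theorem~\ref{th1} gives $\int_0^A t\,k(t)\,dt=\int_0^A t^{1-q}\,dt=\infty$ for $q\ge 2$, and the corollary follows.
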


\medskip

In view of Corollary~\ref{coro}, it is natural to consider the following
equation:
\begin{equation}\label{eq2m}
\left\{
\begin{aligned}
F(D^2u,Du,u,x) &= \delta(x)^{-q}u^{-p}
&& \text{in } \Omega,\\
u &= 0
&& \text{on } \partial\Omega,
\end{aligned}
\right.
\end{equation}
where $p \ge 0$ and $0 < q < 2$.

Our next result, Proposition~\ref{prop4}, is an analogue of
Proposition~2.6 in \cite{ghergu2010lane}. These results describe the
boundary behavior of positive solutions of \eqref{eq2m}.
We also recall that the boundary behavior of solutions of \eqref{eq2m}
in the case $-2 < q < 0$ has already been established for more general
uniformly elliptic operators; see Theorem~2 in \cite{felmer2012existence}
and Theorem~4.2 in \cite{mallick2025regularity}.
On the other hand, in the setting of the Laplace equation, this result has been proved in Theorem~3.5 of \cite{dupaigne2007lane}.
The proof of Proposition~\ref{prop4} relies heavily on the construction of appropriate subsolutions and supersolutions.
In this construction, the eigenfunction and its properties are used repeatedly.
For the existence and properties of the eigenfunction, we refer to Theorem~\ref{distance} and \eqref{rem1}.
The function employed in the construction will be a solution of the following second-order ordinary differential equation.
\begin{equation}\label{H}
\left\{
\begin{aligned}
H^{\prime\prime}(t) &= -t^{-\alpha}H^{-\beta}(t),
\quad \text{for all } 0<t\le b<1,\\
H,\, H^{\prime} &> 0
\quad \text{in } (0,b],\\
H(0) &= 0.
\end{aligned}
\right.
\end{equation}

Observe that:
\begin{enumerate}
\item
The function $H$ is concave. Consequently,
\begin{equation}\label{con}
H(t) > t H^{\prime}(t), \qquad \text{for all } 0<t\le b.
\end{equation}

\item
Note that the first equation in \eqref{H} implies
\[
- H^{\prime\prime}(t) \le c^{-\beta} t^{-(\alpha+\beta)},
\qquad \text{for all } 0<t\le b.
\]
Thus, if $\alpha+\beta<1$, it is easy to see that $0 \le H(+0) < \infty$. Consequently, there exist positive constants $c_{1}$ and $c_{2}$ such that
\begin{equation}\label{Lip}
c_{1} t \le H(t) \le c_{2} t,
\qquad \text{for all } 0<t\le b.
\end{equation}

\item
Assume now that $\beta = 1-\alpha$. Then the concavity of $H$ together with the first equation in \eqref{H} yields
\[
- H^{\prime\prime}(t)
< t^{-1} [H^{\prime}(t)]^{\alpha-1},
\qquad \text{for all } 0<t\le b,
\]
that is,
\[
- H^{\prime\prime}(t)\,[H^{\prime}(t)]^{1-\alpha}
< t^{-1},
\qquad \text{for all } 0<t\le b.
\]

Integrating the above inequality over the interval $[s,b]$, we obtain
\[
(H^{\prime})^{2-\alpha}(s) - (H^{\prime})^{2-\alpha}(b)
\le (2-\alpha)(\ln b - \ln s),
\qquad \text{for all } 0<s\le b.
\]

Now choose $\delta_{1}\in(0,b)$ such that
\[
(H^{\prime})^{2-\alpha}(b) + (2-\alpha)\ln b
\le \tfrac{1}{3}(2-\alpha)\bigl[-\ln s\bigr],
\qquad \text{for all } 0<s\le \delta_{1}.
\]
Then we deduce that
\[
H^{\prime}(s)
\le c \bigl[-\ln s\bigr]^{\frac{1}{2-\alpha}},
\qquad \text{for all } s\in(0,\delta_{1}].
\]

Fix $t\in(0,\delta_{1}]$ and integrate the above inequality over $s\in[\varepsilon,t]$ to obtain
\begin{equation}\label{m84}
H(t)-H(\varepsilon)
\le c_{1} t[-\ln t]^{\frac{1}{2-\alpha}}
+ \frac{c_{1}}{2-\alpha}
\int_{\varepsilon}^{t}[-\ln s]^{\frac{\alpha-1}{2-\alpha}}\,ds.
\end{equation}

Observe that
\[
\int_{0}^{t}[-\ln s]^{\frac{\alpha-1}{2-\alpha}}\,ds < +\infty,
\]
and, as a consequence of L'H\^{o}pital's rule,
\[
\lim_{t\to0}
\frac{\displaystyle \int_{0}^{t}[-\ln s]^{\frac{1-\alpha}{2-\alpha}}\,ds}
{t[-\ln t]^{\frac{1}{2-\alpha}}}
=0.
\]
Therefore, letting $\varepsilon\to0$ in \eqref{m84}, we conclude that there exist constants $c_{2}>0$ and $\delta_{2}\in(0,\delta_{1})$ such that
\begin{equation}
H(t)\le c_{2} t[-\ln t]^{\frac{1}{2-\alpha}},
\qquad \text{for all } 0<t\le \delta_{2}.
\end{equation}

Using the above inequality in the first equation of \eqref{H}, we obtain
\[
- H^{\prime\prime}(t)
\ge c_{2}^{\alpha-1} t [-\ln t]^{\frac{\alpha-1}{2-\alpha}},
\qquad \text{for all } 0<t\le \delta_{2}.
\]

Integrating this inequality twice, we deduce that there exist positive constants $c_{3}>0$ and
$\delta_{3}\in(0,\delta_{2})$ such that
\begin{equation}\label{hold}
H(t) \ge c_{3} t [-\ln t]^{\frac{1}{2-\alpha}},
\qquad \text{for all } 0<t\le \delta_{3}.
\end{equation}

\item
Multiplying the first equation in \eqref{H} by $H^{\prime}(t)$ and integrating over $[t,b]$, we obtain:
\begin{equation*}
\begin{aligned}
(H^{\prime})^2(t) - (H^\prime)^2(b)
&= 2 \int_t^b s^{-\alpha} H^{-\beta}(s) H^\prime(s) \, ds \\
&\le 2 t^{-\alpha} \int_{H(t)}^{H(b)} \tau^{-\beta} \, d\tau \\
&\le 2 t^{-\alpha} H^{-\beta}(t) [H(b) - H(t)] \\
&\le 2 t^{-\alpha} H^{-\beta}(t) H(b),
\end{aligned}
\end{equation*}

and hence there exists a constant $C_1>0$ such that
\begin{equation}\label{hp}
H^\prime(t) \le C_1 t^{-\alpha} H^{-\beta}(t),
\qquad \text{for all } 0<t\le b.
\end{equation}
\end{enumerate}
\begin{proposition}\label{prop4}
Let $p \ge 0$ and $0 < q < 2$. Then there exist constants $c_1, c_2 > 0$ and $A > \mathrm{diam}(\Omega)$ such that any solution $u$ of
\begin{equation}\label{eq2}
\left\{
\begin{aligned}
F(D^2 u, Du, u, x) &= \delta(x)^{-q} u^{-p}, && \text{in } \Omega,\\
u &= 0, && \text{on } \partial \Omega,
\end{aligned}
\right.
\end{equation}
satisfies the following boundary estimates:

\begin{enumerate}
    \item[(i)] If $p+q<1$, 
    \[
    c_1 \delta(x) \le u(x) \le c_2 \delta(x), \quad \text{for all } x \in \Omega.
    \]

    \item[(ii)] If $p+q=1$, 
    \[
    c_1 \, \delta(x) \, \log^{\frac{1}{1+p}}\Bigl(\frac{A}{\delta(x)}\Bigr)
    \le u(x) 
    \le c_2 \, \delta(x) \, \log^{\frac{1}{1+p}}\Bigl(\frac{A}{\delta(x)}\Bigr),
    \quad \text{for all } x \in \Omega.
    \]

    \item[(iii)] If $p+q>1$, 
    \[
    c_1 \, \delta(x)^{\frac{2-q}{1+p}} 
    \le u(x) 
    \le c_2 \, \delta(x)^{\frac{2-q}{1+p}}, 
    \quad \text{for all } x \in \Omega.
    \]
\end{enumerate}
\end{proposition}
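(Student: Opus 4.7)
The plan is to sandwich any solution $u$ of \eqref{eq2} between two explicit barriers $U_{c_1}(x) = c_1 H(\varphi_1^+(x))$ and $U_{c_2}(x) = c_2 H(\varphi_1^+(x))$ and then invoke the comparison principle, Proposition~\ref{prop1}; combined with $\varphi_1^+ \sim \delta$ from \eqref{ineigen}, the asymptotics of $H$ near $0$ will produce each of the three claimed estimates. The function $H$ will be the solution of the ODE \eqref{H} with the natural choice $\alpha = q$, $\beta = p$. In case (i), $\alpha + \beta < 1$, so \eqref{Lip} gives $H(t) \sim t$. In case (ii), $\beta = 1 - \alpha$, so \eqref{hold} together with its matching upper bound gives $H(t) \sim t[-\ln t]^{1/(2-q)} = t[-\ln t]^{1/(1+p)}$. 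In case (iii), the exponent $\eta := (2-q)/(1+p)$ lies in $(0,1)$; the explicit power $H(t) = A\,t^{\eta}$, with $A>0$ chosen so that $-H'' = t^{-q} H^{-p}$ holds as an identity on $(0,\infty)$, plays the same role and gives $H(t)\sim t^{\eta}$.

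The core computation starts from the chain rule
\[
D^2 U_c = c H''(\varphi_1^+)\,D\varphi_1^+ \otimes D\varphi_1^+ + c H'(\varphi_1^+)\,D^2 \varphi_1^+.
\]
Using \textup{(H4)} to compare $F(D^2 U_c, DU_c, U_c, x)$ with $F(cH'(\varphi_1^+) D^2\varphi_1^+,\, cH'(\varphi_1^+) D\varphi_1^+,\, cH'(\varphi_1^+)\varphi_1^+, x)$: the gradient difference vanishes; the scalar gap $c\bigl(H(\varphi_1^+) - H'(\varphi_1^+)\varphi_1^+\bigr)$ is positive by concavity \eqref{con}; and homogeneity \textup{(H1)} reduces the reference value to $cH'(\varphi_1^+)\mu_1^+(F,\Omega)\varphi_1^+$. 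The correction $cH''(\varphi_1^+)D\varphi_1^+ \otimes D\varphi_1^+$ is a rank-one, negative semidefinite matrix whose single nonzero eigenvalue is $cH''(\varphi_1^+)|D\varphi_1^+|^2<0$, and substituting $-H''(\varphi_1^+) = (\varphi_1^+)^{-q} H^{-p}(\varphi_1^+)$ from \eqref{H} shows that its Pucci extremals equal $\lambda c$ (lower) or $\Lambda c$ (upper) times $(\varphi_1^+)^{-q} H^{-p}(\varphi_1^+)|D\varphi_1^+|^2$. Collecting terms, the upper bound on $F(D^2 U_c, DU_c, U_c, x)$ becomes
\[
\Lambda c(\varphi_1^+)^{-q} H^{-p}(\varphi_1^+)|D\varphi_1^+|^2 + c H'(\varphi_1^+)\mu_1^+\varphi_1^+ + \gamma c\bigl(H(\varphi_1^+) - H'(\varphi_1^+)\varphi_1^+\bigr),
\]
and the corresponding lower bound is obtained by replacing $\Lambda$ with $\lambda$ and reversing the sign of the last summand.

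To conclude, compare with the target right-hand side $\delta(x)^{-q} U_c^{-p} = c^{-p}\delta(x)^{-q} H^{-p}(\varphi_1^+)$. On the boundary strip $\{\delta < \delta_0\}$, the Hopf-type lower bound \eqref{rem1} yields $|D\varphi_1^+| \ge c_* > 0$ and \eqref{ineigen} yields $(\varphi_1^+)^{-q} \sim \delta^{-q}$, so the Pucci term scales exactly as the right-hand side, with prefactor $\Lambda c$ (upper) or $\lambda c$ (lower) competing against $c^{-p}$. Choosing $c$ sufficiently small (respectively large) thus produces the subsolution (respectively supersolution) inequality in the strip, provided that $cH'\mu_1^+\varphi_1^+$ and $\gamma c(H - H'\varphi_1^+)$ are genuinely lower order; this is controlled by the asymptotics of $H$ together with the sharp bound \eqref{hp} on $H'$. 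On the compact complement $\{\delta \ge \delta_0\}$, the equation is non-singular, so the barrier can be extended (or $H$ redefined in cases (i)--(ii), where it is only given on $(0,b]$) and a further rescaling of $c$ secures the inequalities globally. Proposition~\ref{prop1} then yields $U_{c_1} \le u \le U_{c_2}$ in $\Omega$, and the three asymptotic regimes of $H$ translate into (i)--(iii) via \eqref{ineigen}.

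The main technical obstacle will be case (ii): because logarithmic factors enter both sides of the sub/supersolution inequality through $H^{-p}$ and through $U_c^{-p}$, merely matching powers of $\delta$ is not sufficient, and dominance of the Pucci term demands \eqref{hp} together with careful bookkeeping of the logarithmic corrections to ensure that the $\mu_1^+$-term and the $\gamma$-term remain genuinely subleading. A secondary difficulty is the globalisation step — extending the barrier construction from the boundary strip to $\{\delta \ge \delta_0\}$ without disturbing the leading-order asymptotics — which is handled by exploiting the linear scaling of the barrier in $c$ on the region where the equation loses its singularity.
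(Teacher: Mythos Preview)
Your proposal is correct and follows essentially the same route as the paper: build barriers $cH(\varphi_1^+)$ from the ODE \eqref{H} with $\alpha=q$, $\beta=p$, compute $F$ on them via the chain rule, \textup{(H4)}, homogeneity, and the Pucci action on the rank-one piece, then close with Proposition~\ref{prop1} and translate using \eqref{ineigen} together with the asymptotics \eqref{Lip}, \eqref{hold}. The only cosmetic difference is that the paper writes the barrier as $mH(c\varphi_1^+)$ with a second scaling parameter inside $H$, and in case~(iii) simply defers to the reference rather than invoking the explicit power $t^{(2-q)/(1+p)}$ as you do; both are equivalent.
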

\begin{proof}
 To prove the estimate, we construct a sub- and supersolution that behaves like a distance function.
Consider the following. If we can show that there exist constants \(m\) and \(M\) such that 
\[
v := m H(c \varphi_1^+) \quad \text{and} \quad w := M H(c \varphi_1^+)
\] 
are a subsolution and a supersolution of \eqref{eq2}, respectively, then the argument follows exactly as in Theorem 3.5 of \cite{dupaigne2007lane}.
  
Let 
\[
v := m H(c \varphi_1^+),
\] 
where $H$ is the solution of \eqref{H} with $\alpha = q$ and $\beta = p$. Then we have
\begin{align*}
D v &= m c H^{\prime}(c \varphi_1^+) D \varphi_1^+, \\
D^2 v &= m c \left[ c H^{\prime\prime}(c \varphi_1^+) D \varphi_1^+ \otimes D \varphi_1^+ + H^{\prime}(c \varphi_1^+) D^2 \varphi_1^+ \right].
\end{align*}

Since $H$, $H^{\prime}$, and $\varphi_1^+$ are nonnegative, we have
\[
[c \varphi_1^+ H^{\prime}(c \varphi_1^+) - H(c \varphi_1^+)]^{-} \le H(c \varphi_1^+).
\]

By using $(H4)$, it follows that
\begin{equation}\label{line3}
\left\{
\begin{aligned}
&F(mc\{H^{\prime}(c\varphi_1^+)D^2\varphi_1^+\},mcH^{\prime}(c\varphi_1^+)D\varphi_1^+,mH(c\varphi_1^+),x)\\
&\leq F(mc\{H^{\prime}(c\varphi_1^+)D^2\varphi_1^+\},mcH^{\prime}(c\varphi_1^+)D(\varphi_1^+),mc\varphi_1^+H^{\prime}(c\varphi_1^+),x)+\gamma m H(c\phi^{+}_{1}).
\end{aligned}
\right.
\end{equation}

Next, we compute
\begin{equation*}
\left\{    
\begin{aligned}
F(D^2 v, Dv, v, x) 
&= F\Big(mc \big[c H^{\prime\prime}(c \varphi_1^+) D \varphi_1^+ \otimes D \varphi_1^+ + H^{\prime}(c \varphi_1^+) D^2 \varphi_1^+\big], mc H^{\prime}(c \varphi_1^+) D \varphi_1^+, m H(c \varphi_1^+), x \Big) \\
&\le F\Big(mc H^{\prime}(c \varphi_1^+) D^2 \varphi_1^+, mc H^{\prime}(c \varphi_1^+) D \varphi_1^+, m H(c \varphi_1^+), x \Big) \\
&\quad + \mathcal{P}_{\lambda,\Lambda}^{+} \Big( mc \, c H^{\prime\prime}(c \varphi_1^+) D \varphi_1^+ \otimes D \varphi_1^+ \Big) \\
&\le F\Big(mc H^{\prime}(c \varphi_1^+) D^2 \varphi_1^+, mc H^{\prime}(c \varphi_1^+) D \varphi_1^+, mc \varphi_1^+ H^{\prime}(c \varphi_1^+), x \Big) \\
&\quad + \gamma m H(c \varphi_1^+) + \mathcal{P}_{\lambda,\Lambda}^{+} \Big( mc^2 H^{\prime\prime}(c \varphi_1^+) D \varphi_1^+ \otimes D \varphi_1^+ \Big) \\
&= mc H^{\prime}(c \varphi_1^+) F(D^2 \varphi_1^+, D \varphi_1^+, \varphi_1^+, x) + \gamma m H(c \varphi_1^+) \\
&\quad + mc^2 H^{\prime\prime}(c \varphi_1^+) \mathcal{P}_{\lambda,\Lambda}^{+} \big( D \varphi_1^+ \otimes D \varphi_1^+ \big) \\
&= mc H^{\prime}(c \varphi_1^+) \mu_1^+ \varphi_1^+ + \gamma m H(c \varphi_1^+) - mc^2 H^{\prime\prime}(c \varphi_1^+) \lambda |D \varphi_1^+|^2 \\
&\le C_1 mc \mu_1^+ \varphi_1^+ (c \varphi_1^+)^{-q} H^{-p}(c \varphi_1^+) + \gamma m H(c \varphi_1^+) \\
&\quad + mc^2 \lambda |D \varphi_1^+|^2 (c \varphi_1^+)^{-q} H^{-p}(c \varphi_1^+) \\
&= m \big[ C_1 c \mu_1^+ \varphi_1^+ + \gamma m (c \varphi_1^+)^q H(c \varphi_1^+)^{1+p} + c^2 \lambda |D \varphi_1^+|^2 \big] (c \varphi_1^+)^{-q} H^{-p}(c \varphi_1^+) \\
&\le m \big[ k_1 \varphi_1^+ + k_2 |D \varphi_1^+|^2 + k_3 \big] \delta(x)^{-q} H^{-p}(c \varphi_1^+),
\end{aligned}
\right.
\end{equation*}
Here, we define 
\[
k_{3} = \|(c \varphi_1^+)^q H(c \varphi_1^+)^{1+p}\|_{L^\infty(\Omega)},
\] 
and we have used \eqref{line3} in the second inequality, \eqref{hp} in the third-to-last line, and \eqref{ineigen} in the last line.  

Thus, \(v\) is a subsolution, and by the comparison principle, we obtain
\[
u \ge v = m H(c \varphi_1^+) \ge \tilde{c} \varphi_1^+ \ge c_1 \delta(x),
\]  
where we used \eqref{ineigen} and \eqref{Lip}. This establishes half of the inequality.  

In order to prove the rest of the inequality, we consider 
\[
w := M H(c \varphi_1^+),
\] 
where \(H\) again satisfies \eqref{H}. We will also use the inequality
\[
H(c \varphi_1^+) \ge H'(c \varphi_1^+) \, c \varphi_1^+,
\] 
which is a consequence of the concavity of \(H\).

We begin with the following observation.\\

Next, Let $w=MH(c\varphi_1^+).$ Then
\begin{equation*}
\left\{
\begin{aligned}
F(D^{2}w,Dw,w,x) 
&= F\Big(Mc\{c H^{\prime\prime}(c\varphi_1^+) D(\varphi_1^+) \otimes D(\varphi_1^+)\} 
+ Mc\{H^{\prime}(c\varphi_1^+) D^2\varphi_1^+\}, \\
&\quad Mc H^{\prime}(c\varphi_1^+) D(\varphi_1^+), MH(c\varphi_1^+), x\Big) \\
&\geq F\Big(Mc\{H^{\prime}(c\varphi_1^+) D^2\varphi_1^+\}, 
Mc H^{\prime}(c\varphi_1^+) D(\varphi_1^+), MH(c\varphi_1^+), x\Big) \\
&\quad + \mathcal{P}^{-}_{\lambda,\Lambda}\Big(Mc\{c H^{\prime\prime}(c\varphi_1^+) D(\varphi_1^+) \otimes D(\varphi_1^+)\}\Big) \\
&\geq F\Big(Mc\{H^{\prime}(c\varphi_1^+) D^2\varphi_1^+\}, 
Mc H^{\prime}(c\varphi_1^+) D(\varphi_1^+), Mc H^{\prime}(c\varphi_1^+) \varphi_1^+, x\Big) \\
&\quad + \mathcal{P}^{-}_{\lambda,\Lambda}\Big(Mc\{c H^{\prime\prime}(c\varphi_1^+) D(\varphi_1^+) \otimes D(\varphi_1^+)\}\Big) \\
&= Mc H^{\prime}(c\varphi_1^+) F(D^2\varphi_1^+, D\varphi_1^+, \varphi_1^+, x) 
+ Mc^{2} \Lambda |D\varphi_1^+|^2 (-H^{\prime\prime}(c\varphi_1^+)) \\
&= Mc H^{\prime}(c\varphi_1^+) \mu^{+}_{1} \varphi_1^+ 
+ Mc^{2} \Lambda |D\varphi_1^+|^2 (c\varphi_1^+)^{-\alpha} H^{-\beta}(c\varphi_1^+) \\
&\geq Mc^{2} \Lambda |D\varphi_1^+|^2 (c\varphi_1^+)^{-\alpha} H^{-\beta}(c\varphi_1^+) \\
&\geq M k \, d(x)^{-\alpha} H^{-\beta}(c\varphi_1^+),
\end{aligned}
\right.
\end{equation*}
Choose $M>0$ large enough such that $Mk>1$ in $\Omega.$ This implies that $w=MH(c\varphi_1^+)$ is supersolution of \eqref{eq2}. Therefore, by comparison principle we have the second half of the inequality.\\
\end{proof}
\begin{remark}\label{brem}
Consider the following equation:
\begin{equation}\label{req2}
\left\{
\begin{aligned}
F(D^2u,Du,u,x) &= f(x) u^{-p} && \text{in } \Omega,\\
u &= 0 && \text{on } \partial\Omega,
\end{aligned}
\right.
\end{equation}
where $c_1 \delta(x)^{-q} \leq f(x) \leq c_2 \delta(x)^{-q}$ for some positive constants $c_1$ and $c_2$.  
Since the proof of Proposition \ref{prop4} relies on the comparison principle and the construction of sub- and supersolutions, 
the conclusions $(i)$–$(iii)$ of Proposition \ref{prop4} also hold for any solution of \eqref{req2}, depending on the relation between the exponents $p$ and $q$.
\end{remark}
\begin{theorem}\label{th9}
Let $0<a<1,$ $A>diam(\Omega),~p\geq0$ and $q>0$ be such that $p+q=1.$ Then the problem 
\begin{equation}\label{exeq3}
\left\{
\begin{aligned}
F(D^2u(x),Du(x),u(x),x) &= \delta(x)^{-q}log^{-a}\left(\frac{A}{\delta(x)}\right)u^{-p}~~\text{in}~~\Omega,\\
u &= 0~~\text{on}~~\partial\Omega,
\end{aligned}
\right.
\end{equation}
has unique solution $u$ which satisfies 
$$C_1\delta(x)log^{\frac{1-a}{1+p}}\left(\frac{A}{\delta(x)}\right)\leq u(x)\leq C_2\delta(x)log^{\frac{1-a}{1+p}}\left(\frac{A}{\delta(x)}\right)$$
for some $C_1~,C_2>0.$
\end{theorem}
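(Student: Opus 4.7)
The plan is to mimic the proof of Proposition \ref{prop4}(ii), with the ODE \eqref{H} replaced by its logarithmically modified version. Concretely, I would consider the second order ODE
\[
\tilde H''(t) = -t^{-q}\log^{-a}(A/t)\,\tilde H^{-p}(t),\qquad 0<t\le b,
\]
with $\tilde H(0)=0$ and $\tilde H,\tilde H'>0$ on $(0,b]$. The first task is to show that any such $\tilde H$ is comparable with $t\log^{(1-a)/(1+p)}(A/t)$ near $0$; this is the analogue, and the main technical refinement, of item (3) of the observations preceding Proposition \ref{prop4} (note that here $q = 1-p$, so the relation $\beta = 1-\alpha$ needed for those observations is automatic).

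To carry out the ODE analysis, I would exploit the concavity of $\tilde H$ to get $\tilde H(t)\ge t\tilde H'(t)$ (cf.\ \eqref{con}) and use this on the right hand side to obtain
\[
-\tilde H''(t)\,[\tilde H'(t)]^{p} \le t^{-1}\log^{-a}(A/t).
\]
Integrating on $[s,b]$ and using $\int_{s}^{b}\tau^{-1}\log^{-a}(A/\tau)\,d\tau = \tfrac{1}{1-a}\log^{1-a}(A/s)+O(1)$ yields $\tilde H'(s) \le C\log^{(1-a)/(1+p)}(A/s)$, and integrating once more and applying L'H\^opital as in \eqref{m84} gives the upper bound $\tilde H(t)\le C_2\,t\log^{(1-a)/(1+p)}(A/t)$. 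Substituting this back into the defining ODE yields a matching lower bound on $-\tilde H''$, and a further double integration produces $\tilde H(t)\ge C_1\,t\log^{(1-a)/(1+p)}(A/t)$, in the spirit of \eqref{hold}. Along the way I would also record the counterpart of \eqref{hp}, namely $\tilde H'(t)\le C\,t^{-q}\log^{-a}(A/t)\,\tilde H^{-p}(t)$.

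With $\tilde H$ in hand, I would set $v:=m\tilde H(c\varphi_1^+)$ and $w:=M\tilde H(c\varphi_1^+)$ and repeat, essentially verbatim, the barrier calculations of Proposition \ref{prop4}: expand $D^2 v$, use \textup{(H4)} to split off the rank one term $c\tilde H''(c\varphi_1^+)\,D\varphi_1^+\otimes D\varphi_1^+$ through the Pucci operator, invoke the eigenfunction equation \eqref{eq:eigen} together with \eqref{rem1} and \eqref{ineigen} to control $F(D^2\varphi_1^+,D\varphi_1^+,\varphi_1^+,x)$ and to replace $|D\varphi_1^+|$ and $\varphi_1^+$ by $\delta(x)$, and finally invoke the defining ODE for $\tilde H$ to turn $\tilde H''$ into $\delta^{-q}\log^{-a}(A/\delta)\,\tilde H^{-p}$. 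The extra $\log^{-a}$ factor is simply carried along and appears cleanly on the right hand side. Choosing $m$ small enough (resp.\ $M$ large enough) yields a subsolution (resp.\ supersolution) of \eqref{exeq3}; existence of a solution between them follows from the standard Perron/monotone iteration scheme available for $F$, uniqueness is the comparison principle of Proposition \ref{prop1}, and the two sided estimate in the statement then follows from the ODE bounds on $\tilde H$ combined with \eqref{ineigen}.

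The main obstacle I foresee is the ODE analysis in the second paragraph: unlike the purely power case of Proposition \ref{prop4}(ii), one has to keep track of how the logarithmic factor $\log^{-a}(A/t)$ interacts with the implicit asymptotics of $\tilde H$ and $\tilde H'$, and the exact exponent $(1-a)/(1+p)$ only emerges after the ``upper bound, substitute back, lower bound'' loop is closed. Once the matching estimate $\tilde H(t)\asymp t\log^{(1-a)/(1+p)}(A/t)$ is established, the barrier computations and the appeal to Proposition \ref{prop1} go through with only cosmetic changes compared to Proposition \ref{prop4}.
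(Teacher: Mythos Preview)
Your proposal is correct, but the paper takes a more direct route. Instead of introducing a modified ODE for $\tilde H$ and then deducing its asymptotics, the paper simply \emph{guesses} the explicit barrier
\[
w(x)=\varphi_1^{+}(x)\,\log^{b}\!\Big(\frac{A}{\varphi_1^{+}(x)}\Big),\qquad b=\frac{1-a}{1+p},
\]
computes $Dw$ and $D^2w$ by hand, and shows directly (using \textup{(H4)}, the eigenfunction equation, and \eqref{ineigen}) that
\[
C_1(\varphi_1^{+})^{-q}\log^{-a}\!\Big(\frac{A}{\varphi_1^{+}}\Big)w^{-p}
\le F(D^2w,Dw,w,x)\le
C_2(\varphi_1^{+})^{-q}\log^{-a}\!\Big(\frac{A}{\varphi_1^{+}}\Big)w^{-p},
\]
so that $mw$ and $Mw$ are sub- and supersolutions; uniqueness is then Proposition~\ref{prop1}, exactly as you say. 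Your approach is more systematic and stays closer to the template of Proposition~\ref{prop4} (ODE first, then compose with $\varphi_1^{+}$), at the cost of the extra ``upper bound, substitute back, lower bound'' loop you correctly flag as the main work. The paper's shortcut avoids that loop entirely by writing down the answer $t\log^{b}(A/t)$ from the outset, which is shorter here but less transferable to situations where the right profile is not known in advance.
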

\begin{proof}
   Let $w(x)=\varphi_1^{+}(x)\log^{b}\left(\frac{A}{\varphi_1^{+}(x)}\right),$ 
where $\varphi_1^{+}$ is the eigenfunction from Theorem \ref{distance} 
and $b=\frac{1-a}{1+p}\in(0,1)$. 
Now let us calculate 
\begin{equation*}
\left\{
\begin{aligned}
Dw &= \Big[ - b \, \log^{b-1}\!\left(\frac{A}{\varphi_1^{+}(x)}\right) 
       + \log^{b}\!\left(\frac{A}{\varphi_1^{+}(x)}\right) \Big] 
       D\varphi_1^{+}(x),\\[2mm]
D^2 w &= \Big[ - b \, \log^{b-1}\!\left(\frac{A}{\varphi_1^{+}(x)}\right) 
              + \log^{b}\!\left(\frac{A}{\varphi_1^{+}(x)}\right) \Big] D^2 \varphi_1^{+}(x) \\ 
      &\quad + \Big[ \frac{- b(1-b)}{\varphi_1^{+}} \log^{b-2}\!\left(\frac{A}{\varphi_1^{+}(x)}\right) 
                      - \frac{b}{\varphi_1^{+}} \log^{b-1}\!\left(\frac{A}{\varphi_1^{+}(x)}\right) \Big] 
        D\varphi_1^{+} \otimes D\varphi_1^{+} \\[1mm]
&= \alpha \, D^2 \varphi_1^{+} - \beta \, D\varphi_1^{+} \otimes D\varphi_1^{+},
\end{aligned}
\right.
\end{equation*}
where
\begin{equation*}
\left\{
\begin{aligned}
\alpha &= \Big[ - b \, \log^{b-1}\!\Big(\frac{A}{\varphi_1^{+}(x)}\Big) + \log^{b}\!\Big(\frac{A}{\varphi_1^{+}(x)}\Big) \Big] \\
       &= \log^{b-1}\!\Big(\frac{A}{\varphi_1^{+}(x)}\Big) \Big[ -b + \log\Big(\frac{A}{\varphi_1^{+}(x)}\Big) \Big],\\[1mm]
\beta  &= - \Big[ \frac{-b(1-b)}{\varphi_1^{+}} \log^{b-2}\!\Big(\frac{A}{\varphi_1^{+}}\Big) 
                  - \frac{b}{\varphi_1^{+}} \log^{b-1}\!\Big(\frac{A}{\varphi_1^{+}}\Big) \Big] \\
       &= \frac{b(1-b)}{\varphi_1^{+}} \log^{b-2}\!\Big(\frac{A}{\varphi_1^{+}}\Big) 
          + \frac{b}{\varphi_1^{+}} \log^{b-1}\!\Big(\frac{A}{\varphi_1^{+}}\Big).
\end{aligned}
\right.
\end{equation*}
We can choose $A$ sufficiently large such that $\log\left(\frac{A}{\varphi_1^{+}}\right)>1.$ With this assumption, we find that
\[\alpha>0~~\text{and}~~\beta>0.\]
Now, we compute 
\begin{equation*}
\left\{
\begin{aligned}
F(D^2 w(x), D w(x), w(x), x) 
&= F\Big(\alpha D^2 \varphi_1^{+} - \beta D \varphi_1^{+} \otimes D \varphi_1^{+},\, \alpha D \varphi_1^{+},\, \varphi_1^{+}(x) \log^b \Big(\frac{A}{\varphi_1^{+}}\Big),\, x \Big) \\
&\geq F\Big(\alpha D^2 \varphi_1^{+},\, \alpha D \varphi_1^{+},\, \varphi_1^{+}(x) \log^b \Big(\frac{A}{\varphi_1^{+}}\Big),\, x \Big) 
  + \mathcal{P}_{\lambda, \Lambda}^{-}\Big(-\beta D \varphi_1^{+} \otimes D \varphi_1^{+}\Big) \\
&\geq F\Big(\alpha D^2 \varphi_1^{+},\, \alpha D \varphi_1^{+},\, \alpha \varphi_1^{+} + b \varphi_1^{+}(x) \log^{b-1} \Big(\frac{A}{\varphi_1^{+}}\Big),\, x \Big)
  + \beta \lambda |\nabla \varphi_1^{+}|^2 \\
&\geq F\Big(\alpha D^2 \varphi_1^{+},\, \alpha D \varphi_1^{+},\, \alpha \varphi_1^{+},\, x \Big) + \beta \lambda |\nabla \varphi_1^{+}|^2
  ~~~(\text{Using monotonicity in the $u$-variable}) \\
&= \alpha \mu_1^{+} \varphi_1^{+} + \beta \lambda |\nabla \varphi_1^{+}|^2.
\end{aligned}
\right.
\end{equation*}
\text{Substituting the expressions for } $\alpha \text{ and } \beta$, \text{ we obtain:}
\begin{equation*}
\left\{
\begin{aligned}
F(D^2 w(x), D w(x), w(x), x) 
&\geq 
\log^{b-1} \Big(\frac{A}{\varphi_1^{+}}\Big) \Big[ -b + \log \Big(\frac{A}{\varphi_1^{+}}\Big) \Big] \mu_1^{+} \varphi_1^{+} \\
&\quad + \Big[ \frac{b(1-b)}{\varphi_1^{+}} \log^{b-2} \Big(\frac{A}{\varphi_1^{+}}\Big) + \frac{b}{\varphi_1^{+}} \log^{b-1} \Big(\frac{A}{\varphi_1^{+}}\Big) \Big] \lambda |\nabla \varphi_1^{+}|^2 \\
&= (\varphi_1^{+})^{-1} \log^{b-1} \Big(\frac{A}{\varphi_1^{+}}\Big) 
   \Big[ \Big(-b + \log \Big(\frac{A}{\varphi_1^{+}(x)}\Big)\Big) \mu_1^{+} (\varphi_1^{+})^2 \\
&\quad + b(1-b) \log^{-1} \Big(\frac{A}{\varphi_1^{+}}\Big) \lambda |\nabla \varphi_1^{+}|^2 \Big] \\
&\geq C_1 (\varphi_1^{+})^{-1} \log^{b-1} \Big(\frac{A}{\varphi_1^{+}}\Big),
\end{aligned}
\right.
\end{equation*}
and 
\begin{equation*}
\left\{
\begin{aligned}
F(D^2 w(x), D w(x), w(x), x) 
&= F\Big(\alpha D^{2} \varphi_1^{+} - \beta D \varphi_1^{+} \otimes D \varphi_1^{+}, \alpha D \varphi_1^{+}, \varphi_1^{+}(x) \log^b \Big(\frac{A}{\varphi_1^{+}}\Big), x \Big) \\
&\leq F\Big(\alpha D^{2} \varphi_1^{+}, \alpha D \varphi_1^{+}, \varphi_1^{+}(x) \log^b \Big(\frac{A}{\varphi_1^{+}}\Big), x \Big) + \mathcal{P}_{\lambda, \Lambda}^{+}\big(-\beta D \varphi_1^{+} \otimes D \varphi_1^{+}\big) \\
&\leq F\Big(\alpha D^{2} \varphi_1^{+}, \alpha D \varphi_1^{+}, \alpha \varphi_1^{+} + b \varphi_1^{+} \log^{b-1} \Big(\frac{A}{\varphi_1^{+}}\Big), x \Big) + \beta \Lambda |\nabla \varphi_1^{+}|^2 \\
&\leq F\Big(\alpha D^{2} \varphi_1^{+}, \alpha D \varphi_1^{+}, \alpha \varphi_1^{+}, x \Big) + \mathcal{P}_{\lambda, \Lambda}^{+}(0) + \Gamma |0| \\
&\quad + \gamma \Big(-b \log^{b-1} \Big(\frac{A}{\varphi_1^{+}}\Big) \varphi_1^{+} \Big)^{-} + \beta \Lambda |\nabla \varphi_1^{+}|^2 \\
&= \alpha \mu_1^{+} \varphi_1^{+} + \gamma b \log^{b-1} \Big(\frac{A}{\varphi_1^{+}}\Big) \varphi_1^{+} + \beta \Lambda |\nabla \varphi_1^{+}|^2.
\end{aligned}
\right.
\end{equation*}
\text{Substituting } $\alpha \text{ and } \beta$ \text{ into the above expression, we get:}
\begin{equation*}
\left\{
\begin{aligned}
F(D^2 w(x), D w(x), w(x), x) 
&\leq \log^{b-1}\Big(\frac{A}{\varphi_1^{+}}\Big) 
\Big[-b + \log\Big(\frac{A}{\varphi_1^{+}}\Big)\Big] \mu_1^{+} \varphi_1^{+} \\
&\quad + \gamma b \, \log^{b-1}\Big(\frac{A}{\varphi_1^{+}}\Big) \varphi_1^{+} \\
&\quad + \Big[ \frac{b(1-b)}{\varphi_1^{+}} \log^{b-2}\Big(\frac{A}{\varphi_1^{+}}\Big) 
+ \frac{b}{\varphi_1^{+}} \log^{b-1}\Big(\frac{A}{\varphi_1^{+}}\Big) \Big] |\nabla \varphi_1^{+}|^2 \Lambda \\
&= (\varphi_1^{+})^{-1} \log^{b-1}\Big(\frac{A}{\varphi_1^{+}}\Big)
\Big[ \big\{-b + \log\Big(\frac{A}{\varphi_1^{+}}\Big) \big\} \mu_1^{+} (\varphi_1^{+})^2 \\
&\quad + \gamma b (\varphi_1^{+})^2 + b(1-b) \log^{-1} \Big(\frac{A}{\varphi_1^{+}}\Big) |\nabla \varphi_1^{+}|^2 \Lambda \Big] \\
&\leq C_2 (\varphi_1^{+})^{-1} \log^{b-1}\Big(\frac{A}{\varphi_1^{+}}\Big),
\end{aligned}
\right.
\end{equation*}
Thus, there exist constants $C_1, C_2 > 0$ such that
\begin{equation*}
C_1 (\varphi_1^{+})^{-1} \log^{b-1}\Big(\frac{A}{\varphi_1^{+}}\Big)
\leq F(D^2 w(x), D w(x), w(x), x)
\leq C_2 (\varphi_1^{+})^{-1} \log^{b-1}\Big(\frac{A}{\varphi_1^{+}}\Big),
\end{equation*}
and equivalently,
\begin{equation*}
C_1 (\varphi_1^{+})^{-q} \log^{-a}\Big(\frac{A}{\varphi_1^{+}}\Big) w^{-p} 
\leq F(D^2 w(x), D w(x), w(x), x)
\leq C_2 (\varphi_1^{+})^{-q} \log^{-a}\Big(\frac{A}{\varphi_1^{+}}\Big) w^{-p}.
\end{equation*}
Thus, $u_1 = m w$ and $u_2 = M w$ are subsolution and supersolution of \eqref{exeq3} for suitable constants $0 < m < 1 < M$. Hence, Problem \eqref{exeq3} admits a solution $u$ satisfying
\[
m \, \varphi_1^{+} \, \log^{\frac{1-a}{1+p}}\!\Big(\frac{A}{\varphi_1^{+}}\Big)
\leq u \leq 
M \, \varphi_1^{+} \, \log^{\frac{1-a}{1+p}}\!\Big(\frac{A}{\varphi_1^{+}}\Big)
\quad \text{in } \Omega.
\]
\textbf{Uniqueness:} 
Observe that for each fixed $x \in \Omega$, the function
\[
t \in (0, \infty) \mapsto \delta(x)^{-q} \log^{-a} \Big( \frac{A}{\delta(x)} \Big) t^{-p}
\]
is decreasing in $t$. Hence, the operator considered here is proper, and uniqueness follows.  

More precisely, let $u$ and $v$ be two distinct solutions of \eqref{exeq3}. Without loss of generality, assume that 
\[
\omega := \{ x \in \Omega \mid v(x) < u(x) \}
\]
is a non-empty open set and satisfies $\omega \subset\subset \Omega$.
Thus, in $\omega$ we have
\begin{equation*}
\left\{
\begin{aligned}
 F(D^2 u(x), Du(x), u(x), x) 
&= \delta(x)^{-q} \log^{-a} \left( \frac{A}{\delta(x)} \right) u^{-p} \\
&\leq \delta(x)^{-q} \log^{-a} \left( \frac{A}{\delta(x)} \right) v^{-p}\\
&= F(D^2 v(x), Dv(x), v(x), x).
\end{aligned}
\right.
\end{equation*}
and $v=u$ on $\partial\omega.$ Thus $u\leq v$ in $\omega,$ which contradicts the definition of $\omega.$ Thus $u\equiv v.$
\end{proof}
\begin{corollary}\label{coro2}
Let $C>0$ and let $a, A, p, q$ be as in Theorem \ref{th9}. Then there exists a constant $\underline{c}>0$ such that any solution $u$ of 
\begin{equation*}
\left\{
\begin{aligned}
F(D^2 u, Du, u, x) &\geq C \, \delta(x)^{-q} \log^{-a}\left(\frac{A}{\delta(x)}\right) u^{-p} && \text{in } \Omega,\\
u &> 0 && \text{in } \Omega,\\
u &= 0 && \text{on } \partial\Omega,
\end{aligned}
\right.
\end{equation*}
satisfies
\[
u(x) \geq \underline{c} \, \delta(x) \, \log^{\frac{1-a}{1+p}}\left(\frac{A}{\delta(x)}\right) \quad \text{in } \Omega.
\]
\end{corollary}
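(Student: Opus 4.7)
The plan is to build a scaled subsolution of the equation associated with the hypothesis inequality, and then invoke comparison. Recall the barrier $w(x) := \varphi_1^+(x)\log^{b}(A/\varphi_1^+(x))$ with $b = (1-a)/(1+p) \in (0,1)$, which is used in the proof of Theorem \ref{th9}. The computation carried out there produces, after possibly enlarging $A$ so that $\log(A/\varphi_1^+) > 1$ on $\Omega$, the one-sided bound
\[
F(D^2w, Dw, w, x) \leq C_{2}\,(\varphi_1^+)^{-q}\log^{-a}(A/\varphi_1^+)\, w^{-p}\quad\text{in }\Omega,
\]
which is exactly the upper estimate proved in the middle of the proof of Theorem \ref{th9}.

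The eigenfunction estimate \eqref{ineigen} gives $\varphi_1^+ \asymp \delta$ on $\Omega$, and (after possibly enlarging $A$ once more) this implies $\log(A/\varphi_1^+) \asymp \log(A/\delta)$ on $\Omega$. Hence there exist constants $K_1, K_2 > 0$ such that
\[
(\varphi_1^+)^{-q}\log^{-a}(A/\varphi_1^+) \leq K_1\,\delta^{-q}\log^{-a}(A/\delta)
\quad\text{and}\quad
w \geq K_2\,\delta\,\log^{b}(A/\delta)\quad\text{in }\Omega.
\]
For a parameter $m \in (0,1)$ set $\tilde w := m w$. The homogeneity \textup{(H1)} of $F$ yields
\[
F(D^2\tilde w, D\tilde w, \tilde w, x) = m\,F(D^2w, Dw, w, x)
\leq m^{1+p}\,C_{2}K_1\,\delta^{-q}\log^{-a}(A/\delta)\, \tilde w^{-p}\quad\text{in }\Omega.
\]
Choosing $m$ small enough that $m^{1+p}C_{2}K_1 \leq C$ makes $\tilde w$ a subsolution of
\[
F(D^2v, Dv, v, x) = C\,\delta^{-q}\log^{-a}(A/\delta)\,v^{-p}\ \text{in }\Omega,\quad v = 0\ \text{on }\partial\Omega.
\]

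By hypothesis $u$ is a viscosity supersolution of this same equation with $u=0$ on $\partial\Omega$, and the weight $\phi(x) := C\,\delta(x)^{-q}\log^{-a}(A/\delta(x))$ is continuous and strictly positive on $\Omega$. Therefore Proposition \ref{prop1} applies and yields $\tilde w \leq u$ in $\Omega$. Combining this with $\tilde w \geq m K_2\,\delta\,\log^{b}(A/\delta)$ gives the claim with $\underline c := m K_2$.

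The only non-routine point I expect is the comparability $\log(A/\varphi_1^+) \asymp \log(A/\delta)$ used twice above. Writing $\log(A/\varphi_1^+) = \log(A/\delta) + \log(\delta/\varphi_1^+)$, the correction is uniformly bounded by $|\log C|$ from \eqref{ineigen}; near $\partial\Omega$ the dominant $-\log\delta$ term forces the ratio to $1$, and on any set $\{\delta \geq \delta_0\}$ both logarithms are bounded between positive constants, so the comparability holds globally on $\Omega$ after a mild enlargement of $A$ if needed.
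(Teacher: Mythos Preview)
Your proposal is correct and follows essentially the same route implicit in the paper: the corollary is stated without proof, as it is meant to be an immediate consequence of the subsolution $u_1 = m w$ built in Theorem~\ref{th9} together with the comparison principle (Proposition~\ref{prop1}). Your explicit use of the homogeneity (H1) to rescale, the translation from $\varphi_1^+$ to $\delta$ via \eqref{ineigen}, and the justification of $\log(A/\varphi_1^+) \asymp \log(A/\delta)$ are exactly the details the paper leaves to the reader.
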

\begin{proposition}\label{prop11}
Let $A > 6 \, \mathrm{diam}(\Omega)$ and $C > 0$. Then there exists a constant $k > 0$ such that any solution 
$u $ of 
\begin{equation}\label{exeq4}
\left\{
\begin{aligned}
F(D^2 u, Du, u, x) &\geq C \, \delta(x)^{-1} \log^{-1}\left(\frac{A}{\delta(x)}\right) && \text{in } \Omega,\\
u &> 0 && \text{in } \Omega,\\
u &= 0 && \text{on } \partial \Omega,
\end{aligned}
\right.
\end{equation}
satisfies
\[
u(x) \geq k \, \delta(x) \, \log \left[ \log \left(\frac{A}{\delta(x)}\right) \right] \quad \text{in } \Omega.
\]
\end{proposition}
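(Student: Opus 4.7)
The plan is to build an explicit sub-barrier
\[
v := k\,\varphi_1^+\,\log\log\!\Big(\frac{A}{\varphi_1^+}\Big),
\]
show that it is a viscosity subsolution of $F(\cdot)=C\,\delta^{-1}\log^{-1}(A/\delta)$ for $k$ small enough, and then deduce $u\ge v$ in $\Omega$ from the comparison principle applied to the proper operator $F$ against this right-hand side. The hypothesis $A>6\,\operatorname{diam}(\Omega)$, together with \eqref{ineigen}, ensures that $\log(A/\varphi_1^+)>1$ throughout $\Omega$, so that the double logarithm is positive and $v$ vanishes on $\partial\Omega$.

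The computation mirrors Theorem~\ref{th9} with the profile $\log^{b}$ replaced by $\log\log$. Writing $L:=\log(A/\varphi_1^+)$ and $\ell:=\log L$, one finds
\[
Dv=k\,(\ell-L^{-1})\,D\varphi_1^+,\qquad D^2 v=k\,\alpha\,D^2\varphi_1^+-k\,\beta\,D\varphi_1^+\otimes D\varphi_1^+,
\]
with $\alpha=\ell-L^{-1}$ and $\beta=(\varphi_1^+)^{-1}(L^{-1}+L^{-2})$. Since $v=k(\alpha\,\varphi_1^++\varphi_1^+/L)$ is not a scalar multiple of $\varphi_1^+$, one applies (H4) in two stages: first to drop the rank-one perturbation $-k\beta\,D\varphi_1^+\otimes D\varphi_1^+$, which costs $\mathcal{P}_{\lambda,\Lambda}^{+}(-k\beta\,D\varphi_1^+\otimes D\varphi_1^+)=k\Lambda\beta|D\varphi_1^+|^2$; and then to shift the zero-order slot from $v$ to $k\alpha\varphi_1^+$, which costs $k\gamma\,\varphi_1^+/L$. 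Combined with (H1) and the eigenvalue equation \eqref{eq:eigen}, this yields
\[
F(D^2v,Dv,v,x)\le k\,\bigl[\,\alpha\,\mu_1^+\,\varphi_1^++\gamma\,\varphi_1^+/L+\Lambda\,\beta\,|D\varphi_1^+|^{2}\,\bigr].
\]
Using $|D\varphi_1^+|\ge c$ near $\partial\Omega$ from \eqref{rem1} and $\varphi_1^+\asymp\delta$ from \eqref{ineigen}, the $\beta$-term dominates near the boundary and is bounded by a multiple of $k/(\delta L)$, while the first two are of lower order there; on the interior set $\{\delta\ge\delta_0\}$ everything is bounded and the target $\delta^{-1}L^{-1}$ is bounded below by a positive constant. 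A global bound $F(D^2v,Dv,v,x)\le k K_0\,\delta^{-1}\log^{-1}(A/\delta)$ then follows, and choosing $k\le C/K_0$ makes $v$ a subsolution.

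Comparison of $v$ and $u$ in the spirit of Proposition~\ref{prop1} yields $u\ge v=k\,\varphi_1^+\log\log(A/\varphi_1^+)$ in $\Omega$. To pass to the claimed bound, one invokes \eqref{ineigen} once more: $\varphi_1^+\ge C\delta$, and for $\delta$ small the estimate $\varphi_1^+\le\delta/C$ gives $\log(A/\varphi_1^+)\ge\tfrac12\log(A/\delta)$, hence $\log\log(A/\varphi_1^+)\ge\tfrac12\log\log(A/\delta)$; on the compact interior one trades this against the lower bound $u\ge c\,\delta$ obtained as in Proposition~\ref{prop3} applied to the single inequality. The main obstacle I anticipate is precisely this $u$-slot mismatch produced by the $\log\log$ factor: because $v$ is not a scalar multiple of $\varphi_1^+$, homogeneity (H1) cannot be used directly, and the extra $\gamma\,\varphi_1^+/L$ contribution produced by the $\gamma$-slot of (H4) must be shown to remain subleading in the final estimate, which works since it is of order $\delta/L$ while the dominant term is at least of order $1/(\delta L)$ near the boundary.
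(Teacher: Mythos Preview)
Your proposal is correct and follows essentially the same route as the paper: both construct the barrier $w=\varphi_1^{+}\log\log(A/\varphi_1^{+})$, compute its gradient and Hessian in the form $\alpha D^{2}\varphi_1^{+}-\beta D\varphi_1^{+}\otimes D\varphi_1^{+}$, apply (H4) and (H1) to bound $F(D^2w,Dw,w,x)$ by a multiple of $[\varphi_1^{+}\log(A/\varphi_1^{+})]^{-1}\le m[\delta\log(A/\delta)]^{-1}$, scale by a small constant to obtain a subsolution of $F=C\delta^{-1}\log^{-1}(A/\delta)$, and conclude via comparison. Your handling of the zero-order slot is in fact tidier than the paper's: since $Dv=k\alpha D\varphi_1^{+}$ exactly, no gradient shift is needed, and only the single correction $k\gamma\varphi_1^{+}/L$ appears, whereas the paper (working from a miswritten $Dw$) carries an additional $\Gamma$-term that is harmless but unnecessary. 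The appeal to \eqref{rem1} is not actually required for the \emph{upper} bound you need (boundedness of $|D\varphi_1^{+}|$ suffices), but this does no harm.
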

\begin{proof}
Let 
\[
w(x) = \varphi_1^{+}(x) \, \log\Bigg[\log\Big(\frac{A}{\varphi_1^{+}(x)}\Big)\Bigg],
\] 
where $\varphi_1^{+}$ is the positive eigenfunction from Theorem \ref{distance}.  
Then, proceeding as in Proposition \ref{th9}, we compute
\begin{equation*}
\left\{
\begin{aligned}
Dw &= - \Big[\log\Big(\frac{A}{\varphi_1^{+}}\Big)\Big]^{-1} D\varphi_1^{+},\\[2mm]
D^2 w &= \Bigg[ \log\Big(\log\Big(\frac{A}{\varphi_1^{+}}\Big)\Big) - \log^{-1}\Big(\frac{A}{\varphi_1^{+}}\Big) \Bigg] D^2 \varphi_1^{+} \\
&\quad + \Bigg[ - \frac{1}{\varphi_1^{+}} \log^{-1}\Big(\frac{A}{\varphi_1^{+}}\Big) - \frac{1}{\varphi_1^{+}} \log^{-2}\Big(\frac{A}{\varphi_1^{+}}\Big) \Bigg] D\varphi_1^{+} \otimes D\varphi_1^{+} \\[1mm]
&= \alpha D^2 \varphi_1^{+} - \beta D\varphi_1^{+} \otimes D\varphi_1^{+},
\end{aligned}
\right.
\end{equation*}
where
\begin{equation*}
\left\{
\begin{aligned}
\alpha &= \log\Big(\log\big(\frac{A}{\varphi_1^{+}}\big)\Big) - \log^{-1}\Big(\frac{A}{\varphi_1^{+}}\Big),\\[1mm]
\beta &= - \Bigg[ - \frac{1}{\varphi_1^{+}} \log^{-1}\Big(\frac{A}{\varphi_1^{+}}\Big) - \frac{1}{\varphi_1^{+}} \log^{-2}\Big(\frac{A}{\varphi_1^{+}}\Big) \Bigg].
\end{aligned}
\right.
\end{equation*}
With a sufficiently large choice of $A$, it is easy to see that
\[
\alpha > 0 \quad \text{and} \quad \beta > 0.
\]
\begin{equation*}
\begin{aligned}
F(D^2w, Dw, w, x) &= F\Big(\alpha D^2 \varphi_1^+ - \beta D \varphi_1^+ \otimes D \varphi_1^+, 
                        -\log^{-1}\Big(\frac{A}{\varphi_1^+}\Big) D \varphi_1^+, 
                        \varphi_1^+ \log\Big[\log\Big(\frac{A}{\varphi_1^+}\Big)\Big], x \Big) \\
&\leq F\Big(\alpha D^2 \varphi_1^+, -\log^{-1}\Big(\frac{A}{\varphi_1^+}\Big) D \varphi_1^+, 
          \varphi_1^+ \log\Big[\log\Big(\frac{A}{\varphi_1^+}\Big)\Big], x\Big) 
   + \mathcal{P}_{\lambda,\Lambda}^+\Big(-\beta D\varphi_1^+ \otimes D\varphi_1^+\Big) \\
&= F\Big(\alpha D^2 \varphi_1^+, \alpha D\varphi_1^+ - \log\Big[\log\Big(\frac{A}{\varphi_1^+}\Big)\Big] D\varphi_1^+, 
          \alpha \varphi_1^+ + \log^{-1}\Big(\frac{A}{\varphi_1^+}\Big)\varphi_1^+, x\Big) 
   + \beta \Lambda |\nabla \varphi_1^+|^2 \\
&\leq F\Big(\alpha D^2 \varphi_1^+, \alpha D\varphi_1^+, \alpha \varphi_1^+, x \Big) 
   + \Gamma \Big|\!-\log\big(\log(\frac{A}{\varphi_1^+})\big)\Big| 
   + \gamma \Big(\log^{-1}\Big(\frac{A}{\varphi_1^+}\Big) \varphi_1^+\Big)^{-} 
   + \beta \Lambda |\nabla \varphi_1^+|^2 \\
&= \alpha F(D^2 \varphi_1^+, D\varphi_1^+, \varphi_1^+, x) 
   + \Gamma \log\Big(\log\Big(\frac{A}{\varphi_1^+}\Big)\Big) 
   + \gamma \log^{-1}\Big(\frac{A}{\varphi_1^+}\Big) \varphi_1^+ 
   + \beta \Lambda |\nabla \varphi_1^+|^2 \\
&= \alpha \mu_1^+ \varphi_1^+ 
   + \Gamma \log\Big(\log\Big(\frac{A}{\varphi_1^+}\Big)\Big) 
   + \gamma \log^{-1}\Big(\frac{A}{\varphi_1^+}\Big) \varphi_1^+ 
   + \beta \Lambda |\nabla \varphi_1^+|^2 \\
&= \Big[\log\Big(\log\Big(\frac{A}{\varphi_1^+}\Big)\Big) - \log^{-1}\Big(\frac{A}{\varphi_1^+}\Big)\Big] \mu_1^+ \varphi_1^+ 
   + \Gamma \log\Big(\log\Big(\frac{A}{\varphi_1^+}\Big)\Big) 
   + \gamma \log^{-1}\Big(\frac{A}{\varphi_1^+}\Big) \varphi_1^+ \\
&\quad + \Big[\frac{1}{\varphi_1^+} \log^{-1}\Big(\frac{A}{\varphi_1^+}\Big) 
           + \frac{1}{\varphi_1^+} \log^{-2}\Big(\frac{A}{\varphi_1^+}\Big)\Big] |\nabla \varphi_1^+|^2 \Lambda \\
&\leq c_0 \Big[\varphi_1^+ \log\Big(\frac{A}{\varphi_1^+}\Big)\Big]^{-1},
\end{aligned}
\end{equation*}
where
\[
\begin{aligned}
c_0 = \Bigg\| 
& \Big[
   \varphi_1^{+}
   \log\!\big(\log(\tfrac{A}{\varphi_1^{+}})\big)
   \log\!\big(\tfrac{A}{\varphi_1^{+}(x)}\big)
   - \varphi_1^{+}
  \Big] 
  \mu_1^{+}\varphi_1^{+} \\[6pt]
&\quad +\,
  \Gamma\, 
  \log\!\big(\log(\tfrac{A}{\varphi_1^{+}})\big)
  \log\!\big(\tfrac{A}{\varphi_1^{+}(x)}\big)\,
  \varphi_1^{+} \\[6pt]
&\quad +\,
  \gamma\,(\varphi_1^{+})^2
  + 
  \Big[1+\log^{-1}\!\big(\tfrac{A}{\varphi_1^{+}(x)}\big)\Big]
  |\nabla \varphi_1^{+}|^2\,\Lambda
 \;\Bigg\|_{L^{\infty}(\Omega)} .
\end{aligned}
\]
Moreover, in view of \eqref{ineigen}, we can find $m>0$ such that
\begin{equation}\label{wdeltaineq}
\Big[\varphi_1^+ \log\Big(\frac{A}{\varphi_1^+}\Big)\Big]^{-1} 
\leq m \Big[\delta(x) \log\Big(\frac{A}{\delta(x)}\Big)\Big]^{-1}.
\end{equation}
Thus, 
\[
F(D^2w,Dw,w,x) \leq c_{0} m \Big[\delta(x) \log\Big(\frac{A}{\delta(x)}\Big)\Big]^{-1}.
\]

Set $\underline{k} = C (c_{0} m)^{-1}$, then $\underline{w} = \underline{k} w$ satisfies
\begin{equation*}
\left\{
\begin{aligned}
F(D^2 \underline{w}, D \underline{w}, \underline{w}, x) 
&= F(\underline{k} D^2 w, \underline{k} D w, \underline{k} w, x) \\
&= \underline{k} F(D^2 w, D w, w, x)  \\
&\leq C \Big[\delta(x) \log\Big(\frac{A}{\delta(x)}\Big)\Big]^{-1}.
\end{aligned}
\right.
\end{equation*}
Thus from the comparison principle, we find that 
\begin{equation}
u\geq \underline{w}\geq k \delta(x)\Big[\log\log\left(\frac{A}{\delta(x)}\right)\Big],
\end{equation}
for some $k>0.$ In the last line we have used \eqref{ineigen}.
\end{proof}
\begin{theorem}\label{th4}
Let $p \geq 0$, $A > \operatorname{diam}(\Omega)$, and $a \in \mathbb{R}$. Then the problem
\begin{equation}\label{exeq6}
\left\{
\begin{aligned}
F(D^2u,Du,u,x) &= \delta(x)^{-2}\log^{-a}\!\left(\frac{A}{\delta(x)}\right)u^{-p}
    && \text{in } \Omega,\\
u &> 0 && \text{in } \Omega,\\
u &= 0 && \text{on } \partial\Omega,
\end{aligned}
\right.
\end{equation}
admits a solution if and only if $a>1$.  Moreover, if $a>1$, then problem \eqref{exeq6} has a unique solution $u$, and there exist constants
$c_1, c_2 > 0$ such that
\[
c_1 \log^{\frac{1-a}{1+p}}\!\left(\frac{A}{\delta(x)}\right)
\leq u(x) \leq
c_2 \log^{\frac{1-a}{1+p}}\!\left(\frac{A}{\delta(x)}\right)
\qquad \text{in } \Omega.
\]
\end{theorem}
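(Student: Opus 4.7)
The plan is to dispose of (i) non-existence when $a\le 1$ and (ii) existence, uniqueness and the two-sided bound when $a>1$ separately, combining Theorem~\ref{th1} with a barrier construction patterned on Proposition~\ref{prop11}.

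For (i), I would apply Theorem~\ref{th1} to $k(t):=t^{-2}\log^{-a}(A/t)$. A quick look at $(\log k)'(t)=-2/t+a/[t\log(A/t)]$ shows that $k$ is decreasing on some $(0,t_0)\subset(0,A)$. Capping $k$ by the constant $k(t_0)$ on $[t_0,A)$ yields $\tilde k\le k$ that is decreasing on all of $(0,A)$, and since a solution of \eqref{exeq6} also satisfies $F(D^2u,Du,u,x)\ge\tilde k(\delta(x))u^{-p}$, it suffices to check that $\int_0^A t\,\tilde k(t)\,dt=\infty$. The substitution $s=\log(A/t)$ converts the integral near $0$ into $\int_{\log(A/t_0)}^{\infty} s^{-a}\,ds$, which diverges precisely when $a\le 1$; Theorem~\ref{th1} then rules out solutions in this range.

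For (ii), with $b:=(1-a)/(1+p)<0$, I take as ansatz
\[
w(x) := \log^{b}\!\bigl(A/\varphi_1^{+}(x)\bigr),
\]
which vanishes on $\partial\Omega$ with the profile claimed in the statement. Differentiating in the manner of Proposition~\ref{prop11} gives $Dw=\alpha\,D\varphi_1^+$ and $D^2w=\alpha\,D^2\varphi_1^+-\beta\,D\varphi_1^+\otimes D\varphi_1^+$ with scalars $\alpha,\beta>0$ whenever $A$ is chosen large enough that $\log(A/\varphi_1^+)>|b|$ in $\Omega$. Inserting this into $F$ and using (H4) to peel off the rank-one piece through $\mathcal{P}^{\pm}_{\lambda,\Lambda}$, the eigenvalue equation $F(D^2\varphi_1^+,D\varphi_1^+,\varphi_1^+,x)=\mu_1^+\varphi_1^+$, the Hopf lower bound \eqref{rem1} and the equivalence \eqref{ineigen}, one concludes
\[
k_1\,\delta(x)^{-2}\log^{b-1}\!\bigl(A/\delta(x)\bigr)\;\le\;F(D^2w,Dw,w,x)\;\le\;k_2\,\delta(x)^{-2}\log^{b-1}\!\bigl(A/\delta(x)\bigr)
\]
in $\Omega$ for constants $0<k_1\le k_2$. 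The crucial algebraic identity
\[
b-1 \;=\; -\tfrac{a+p}{1+p} \;=\; -a-p\,b
\]
is exactly what is needed so that $\log^{b-1}(A/\delta)=\log^{-a}(A/\delta)\cdot w^{-p}$ up to a harmless constant; invoking the positive homogeneity (H1), $m w$ and $M w$ then become a subsolution and a supersolution of \eqref{exeq6} for $m>0$ small and $M>0$ large. The sub/supersolution method used in Theorem~\ref{th9} yields a solution $u$ with $mw\le u\le Mw$, and \eqref{ineigen} converts this sandwich into the claimed two-sided estimate. Uniqueness is immediate from Proposition~\ref{prop1} applied with $\phi(x):=\delta(x)^{-2}\log^{-a}(A/\delta(x))$.

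The main obstacle I anticipate is the bookkeeping in the computation of $F(D^2w,Dw,w,x)$: one has to verify that all lower-order by-products produced by (H4)—namely $\alpha\mu_1^+\varphi_1^+$, the $\Gamma$-term on $Dw$, and the $\gamma$-term from mismatching the zero-order argument $w$ against $\alpha\varphi_1^+$—are genuinely dominated near $\partial\Omega$ by $\Lambda\beta|D\varphi_1^+|^2$, so that both inequalities above carry the same $\delta^{-2}\log^{b-1}$ profile uniformly in $\Omega$. The bounds $\alpha\sim|b|\varphi_1^{+,-1}\log^{b-1}(A/\varphi_1^+)$ and $\beta\sim|b|\varphi_1^{+,-2}\log^{b-1}(A/\varphi_1^+)$ together with $|D\varphi_1^+|\ge c>0$ on a neighborhood of $\partial\Omega$ are exactly what makes this domination quantitative.
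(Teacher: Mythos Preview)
Your proposal is correct and follows essentially the same route as the paper: the same barrier $w=\log^{b}(B/\varphi_1^{+})$ with $b=(1-a)/(1+p)$, the same use of (H4) to split off the rank-one piece, and the same comparison-based uniqueness. The only cosmetic differences are that the paper replaces $A$ by a larger auxiliary constant $B$ (both to make $k$ decreasing for the non-existence step and to force $\beta>0$ and absorb the interior $\gamma$-term in the lower bound), whereas you cap $k$ and speak of ``$A$ large enough''; since $\log(B/\delta)$ and $\log(A/\delta)$ are comparable, this changes nothing of substance.
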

\begin{proof}
Fix $B>A$ and define a decreasing function $k:(0,B)\to\mathbb{R}$ by
\[
k(t)=t^{-2}\log^{-a}\!\left(\frac{B}{t}\right).
\]
With this notation, equation \eqref{exeq6} can be rewritten as
\[
\left\{
\begin{aligned}
F(D^2u,Du,u,x) &\geq c\,k(\delta(x))\,u^{-p}
    && \text{in } \Omega,\\
u &> 0 && \text{in } \Omega,\\
u &= 0 && \text{on } \partial\Omega.
\end{aligned}
\right.
\]
where $c>0$. By Theorem~\ref{th1}, the condition
\[
\int_{0}^{A} t\,k(t)\,dt < \infty
\]
holds if and only if $a>1$.

Now assume that $a>1$ and define
\[
w(x)=\log^{b}\!\left(\frac{B}{\varphi_1^{+}(x)}\right),
\]
where
\[
b=\frac{1-a}{1+p}\in(-\infty,0).
\]
Then
\begin{equation*}
\left\{
\begin{aligned}
Dw
&=\left[-\frac{b}{\varphi_1^{+}(x)}
\log^{\,b-1}\!\left(\frac{B}{\varphi_1^{+}(x)}\right)\right]
D\varphi_1^{+}(x)\\
&= \alpha\, D\varphi_1^{+}(x),\\[6pt]
D^2w
&=\left[-\frac{b}{\varphi_1^{+}(x)}
\log^{\,b-1}\!\left(\frac{B}{\varphi_1^{+}(x)}\right)\right]
D^2\varphi_1^{+}(x)\\
&\quad+\left[
\frac{b(b-1)}{(\varphi_1^{+}(x))^{2}}
\log^{\,b-2}\!\left(\frac{B}{\varphi_1^{+}(x)}\right)
+\frac{b}{(\varphi_1^{+}(x))^{2}}
\log^{\,b-1}\!\left(\frac{B}{\varphi_1^{+}(x)}\right)
\right]
D\varphi_1^{+}\otimes D\varphi_1^{+}\\
&= \alpha\, D^2\varphi_1^{+}
-\beta\, D\varphi_1^{+}\otimes D\varphi_1^{+}.
\end{aligned}
\right.
\end{equation*}
where
\begin{equation*}
\left\{
\begin{aligned}
\alpha
&=-\frac{b}{\varphi_1^{+}(x)}
\log^{\,b-1}\!\left(\frac{B}{\varphi_1^{+}(x)}\right)
>0,\\[6pt]
\beta
&=-\frac{b}{\big(\varphi_1^{+}(x)\big)^2}
\log^{\,b-1}\!\left(\frac{B}{\varphi_1^{+}(x)}\right)
\left[
\frac{(b-1)+\log\!\left(\frac{B}{\varphi_1^{+}(x)}\right)}
{\log\!\left(\frac{B}{\varphi_1^{+}(x)}\right)}
\right].
\end{aligned}
\right.
\end{equation*}
Now choose $B>0$ sufficiently large such that
\[
\log\!\left(\frac{B}{\varphi_1^{+}(x)}\right)\geq 2(1-b)
\qquad \text{in } \Omega.
\]
Then $\beta>0$. Moreover,
\begin{equation*}
\left\{
\begin{aligned}
F(D^2w,Dw,w,x)
&=F\!\left(
\alpha D^{2}\varphi^{+}_{1}
-\beta D\varphi^{+}_{1}\otimes D\varphi^{+}_{1},
\alpha D\varphi_1^{+},
\log^b\!\left(\frac{B}{\varphi_1^{+}}\right),
x
\right)\\
&\leq
F\!\left(
\alpha D^{2}\varphi^{+}_{1},
\alpha D\varphi_1^{+},
\log^b\!\left(\frac{B}{\varphi_1^{+}}\right),
x
\right)
+\mathcal{P}_{\lambda,\Lambda}^{+}
\!\left(-\beta D\varphi^{+}_{1}\otimes D\varphi^{+}_{1}\right)\\
&=
F\!\left(
\alpha D^{2}\varphi^{+}_{1},
\alpha D\varphi_1^{+},
\alpha\varphi_1^{+},
x
\right)
+\Big[
F\!\left(
\alpha D^{2}\varphi^{+}_{1},
\alpha D\varphi_1^{+},
\log^b\!\left(\frac{B}{\varphi_1^{+}}\right),
x
\right)\\
&\hspace{4em}
- F\!\left(
\alpha D^{2}\varphi^{+}_{1},
\alpha D\varphi_1^{+},
\alpha\varphi_1^{+},
x
\right)
\Big]
+\beta\Lambda|\nabla\varphi_1^{+}|^2\\
&\leq
F\!\left(
\alpha D^{2}\varphi^{+}_{1},
\alpha D\varphi_1^{+},
\alpha\varphi_1^{+},
x
\right)\\
&\quad
+\gamma\Big(
\alpha\varphi_1^{+}
-\log^b\!\left(\frac{B}{\varphi_1^{+}}\right)
\Big)^{-}
+\beta\Lambda|\nabla\varphi_1^{+}|^2\\
&=
\alpha F\!\left(
D^{2}\varphi^{+}_{1},
D\varphi_1^{+},
\varphi_1^{+},
x
\right)
+\gamma\Big(
\alpha\varphi_1^{+}
-\log^b\!\left(\frac{B}{\varphi_1^{+}}\right)
\Big)^{-}
+\beta\Lambda|\nabla\varphi_1^{+}|^2.
\end{aligned}
\right.
\end{equation*}
Since
\[
\alpha\varphi_1^{+}-\log^b\!\left( \frac{B}{\varphi_1^{+}}\right)
=-b\,\log^{\,b-1}\!\left(\frac{B}{\varphi_1^+}\right)-w
\geq -w,
\]
it follows that
\[
\left(\alpha\varphi_1^{+}-\log^b\!\left( \frac{B}{\varphi_1^{+}}\right)\right)^{-}
\leq w.
\]

Therefore,
\begin{equation*}
\left\{
\begin{aligned}
F(D^2w,Dw,w,x)
&\leq \alpha\mu_1^{+}\varphi_1^{+}
+\gamma w
+\beta\Lambda|\nabla\varphi_1^{+}|^2\\
&= \alpha\mu_1^{+}\varphi_1^{+}
+\gamma\log^b\!\left(\frac{B}{\varphi_1^{+}}\right)
+\beta\Lambda|\nabla\varphi_1^{+}|^2.
\end{aligned}
\right.
\end{equation*}
Substituting the expressions of $\alpha$ and $\beta$ into the previous inequality, we obtain
\begin{equation*}
\left\{
\begin{aligned}
F(D^2w,Dw,w,x)
&\leq 
\left[-\frac{b}{\varphi_1^{+}(x)}\log^{b-1}\!\left(\frac{B}{\varphi_1^{+}(x)}\right)\right]
\mu_1^{+}\varphi_1^{+}(x)\\
&\quad +\gamma \log^{b}\!\left(\frac{B}{\varphi_1^{+}(x)}\right)\\
&\quad +\left[-\frac{b}{(\varphi_1^{+}(x))^{2}}
\log^{b-1}\!\left(\frac{B}{\varphi_1^{+}(x)}\right)
\frac{(b-1)+\log\!\left(\frac{B}{\varphi_1^{+}(x)}\right)}
{\log\!\left(\frac{B}{\varphi_1^{+}(x)}\right)}\right]
\Lambda|\nabla\varphi_1^{+}(x)|^{2}.
\end{aligned}
\right.
\end{equation*}
Factoring out $(\varphi_1^{+})^{-2}
\log^{\,b-1}\!\left(\frac{B}{\varphi_1^{+}}\right),$  we obtain
 \begin{equation*}
F(D^2w,Dw,w,x)
\leq
C_2\,(\varphi_1^{+})^{-2}
\log^{\,b-1}\!\left(\frac{B}{\varphi_1^{+}}\right)
\qquad \text{in } \Omega,
\end{equation*}
where
\[
C_2
=
\Big\|
-b\mu_1^{+}(\varphi_1^{+})^{2}
+\gamma (\varphi_1^{+})^{2}\log\!\left(\frac{B}{\varphi_1^{+}}\right)
-b\,\frac{(b-1)+\log\!\left(\frac{B}{\varphi_1^{+}}\right)}
{\log\!\left(\frac{B}{\varphi_1^{+}}\right)}
|\nabla\varphi_1^{+}|^{2}
\Big\|_{L^{\infty}(\Omega)}.
\]
\begin{equation*}
\left\{
\begin{aligned}
F(D^2w,Dw,w,x)
&=F\!\left(
\alpha D^{2}\varphi^{+}_{1}-\beta D\varphi^{+}_{1}\otimes D\varphi^{+}_{1},
\alpha D\varphi_1^{+},
\log^{b}\!\left(\frac{B}{\varphi_1^{+}}\right),
x
\right)\\
&\geq
F\!\left(
\alpha D^{2}\varphi^{+}_{1},
\alpha D\varphi_1^{+},
\log^{b}\!\left(\frac{B}{\varphi_1^{+}}\right),
x
\right)
+\mathcal{P}^{-}_{\lambda,\Lambda}
\!\left(-\beta D\varphi^{+}_{1}\otimes D\varphi^{+}_{1}\right)\\
&=
F\!\left(
\alpha D^{2}\varphi^{+}_{1},
\alpha D\varphi_1^{+},
\alpha\varphi_1^{+},
x
\right)
+
\Big[
F\!\left(
\alpha D^{2}\varphi^{+}_{1},
\alpha D\varphi_1^{+},
\log^{b}\!\left(\frac{B}{\varphi_1^{+}}\right),
x
\right)\\
&\quad-
F\!\left(
\alpha D^{2}\varphi^{+}_{1},
\alpha D\varphi_1^{+},
\alpha\varphi_1^{+},
x
\right)
\Big]
+\beta\lambda |\nabla\varphi_1^{+}|^{2}\\
&\geq
F\!\left(
\alpha D^{2}\varphi^{+}_{1},
\alpha D\varphi_1^{+},
\alpha\varphi_1^{+},
x
\right)
-\gamma\Big(
\log^{b}\!\left(\frac{B}{\varphi_1^{+}}\right)
-\alpha\varphi_1^{+}
\Big)^{+}
+\beta\lambda |\nabla\varphi_1^{+}|^{2}\\
&=
\alpha\mu_1^{+}\varphi_1^{+}
-\gamma\Big(
\log^{b}\!\left(\frac{B}{\varphi_1^{+}}\right)
-\alpha\varphi_1^{+}
\Big)^{+}
+\beta\lambda |\nabla\varphi_1^{+}|^{2}\\
&=
\alpha\mu_1^{+}\varphi_1^{+}
-\gamma\Big(
\log^{b}\!\left(\frac{B}{\varphi_1^{+}}\right)
-\alpha\varphi_1^{+}
\Big)
+\beta\lambda |\nabla\varphi_1^{+}|^{2}
\end{aligned}
\right.
\end{equation*}
substituting the expressions of $\alpha$ and $\beta$ in the above inequality,
\begin{equation*}
\left\{
\begin{aligned}
F(D^2w,Dw,w,x)
&\ge
\left[-\frac{b}{\varphi_1^{+}}
\log^{b-1}\!\left(\frac{B}{\varphi_1^{+}}\right)\right]
\mu_1^{+}\varphi_1^{+}
-\gamma\!\left[
\log^{b}\!\left(\frac{B}{\varphi_1^{+}}\right)
+b\log^{b-1}\!\left(\frac{B}{\varphi_1^{+}}\right)
\right]\\
&\quad
+\left[
-\frac{b}{(\varphi_1^{+})^{2}}
\log^{b-1}\!\left(\frac{B}{\varphi_1^{+}}\right)
\left\{
\frac{(b-1)+\log\!\left(\frac{B}{\varphi_1^{+}}\right)}
{\log\!\left(\frac{B}{\varphi_1^{+}}\right)}
\right\}
\right]\lambda |\nabla\varphi_1^{+}|^2\\[1mm]
&=
\frac{1}{(\varphi_1^{+})^{2}}
\log^{b-1}\!\left(\frac{B}{\varphi_1^{+}}\right)
\Bigg[
-b\mu_1^{+}(\varphi_1^{+})^{2}
-\gamma(\varphi_1^{+})^{2}
\Big(
\log\!\left(\frac{B}{\varphi_1^{+}}\right)+b
\Big)\\
&\qquad
-b\left\{
\frac{(b-1)+\log\!\left(\frac{B}{\varphi_1^{+}}\right)}
{\log\!\left(\frac{B}{\varphi_1^{+}}\right)}
\right\}
\lambda |\nabla\varphi_1^{+}|^2
\Bigg].
\end{aligned}
\right.
\end{equation*}
In the neighbourhood $N$ of  $\partial\Omega$, the quantity
\[
\begin{aligned}
&\Bigg[
-b\mu_1^+(\varphi_1^+)^2
-\gamma(\varphi_1^+)^2\left\{
\log\!\left( \frac{B}{\varphi_1^{+}}\right)+b
\right\}
-b\left\{
\frac{(b-1)+\log\!\left(\frac{B}{\varphi_1^{+}}\right)}
{\log\!\left(\frac{B}{\varphi_1^{+}}\right)}
\right\}
\lambda|\nabla\varphi_1^+|^2
\Bigg]
\end{aligned}
\]
is positive. Therefore, there exists a constant $C_1>0$ such that
\[
F(D^2w,Dw,w,x)\geq 
C_1\,\frac{1}{(\varphi_1^+)^2}
\log^{\,b-1}\!\left( \frac{B}{\varphi_1^{+}}\right)
\qquad \text{in } \Omega\cap N,
\]
Notice that also that for any compact set $\mathcal{K}\subset\Omega$, the quantity
\[
\begin{aligned}
&\Bigg[
-b\mu_1^+(\varphi_1^+)^2
-\gamma(\varphi_1^+)^2\left\{
\log\!\left( \frac{B}{\varphi_1^{+}}\right)+b
\right\}
-b\left\{
\frac{(b-1)+\log\!\left(\frac{B}{\varphi_1^{+}}\right)}
{\log\!\left(\frac{B}{\varphi_1^{+}}\right)}
\right\}
\lambda|\nabla\varphi_1^+|^2
\Bigg]
\end{aligned}
\]
is strictly positive in $\mathcal{K}$
if
\[
\gamma<
\frac{
-b\mu_1^+(\varphi_1^+)^2
-
b\left\{
\frac{(b-1)+\log\!\left(\frac{B}{\varphi_1^{+}(x)}\right)}
{\log\!\left(\frac{B}{\varphi_1^{+}(x)}\right)}
\right\}
\lambda|\nabla\varphi_1^+|^2
}{
(\varphi_1^+)^2
\left\{
\log\!\left( \frac{B}{\varphi_1^{+}}\right)+b
\right\}
}.
\]
Observe that above condition can always be obtained by choosing $B$ large. Furthermore, there exists a constant $C_1>0$ such that
\[
F(D^2w,Dw,w,x)
\geq
C_1\,(\varphi_1^+)^{-2}
\log^{\,b-1}\!\left( \frac{B}{\varphi_1^{+}}\right),
\qquad \text{in }~\mathcal{K}.
\]
Combining this estimate with the upper bound obtained previously, we conclude that there exist
constants $C_1,C_2>0$ such that
\[
C_1\,(\varphi_1^+)^{-2}
\log^{\,b-1}\!\left( \frac{B}{\varphi_1^{+}}\right)
\leq
F(D^2w,Dw,w,x)
\leq
C_2\,(\varphi_1^+)^{-2}
\log^{\,b-1}\!\left( \frac{B}{\varphi_1^{+}}\right),
\qquad \text{in } \Omega.
\]
Equivalently, recalling that $w=\log^{\,b}\!\left(\dfrac{B}{\varphi_1^{+}}\right)$, we may write
\[
C_1\,(\varphi_1^+)^{-2}
\log^{-a}\!\left( \frac{B}{\varphi_1^{+}}\right)
w^{-p}
\leq
F(D^2w,Dw,w,x)
\leq
C_2\,(\varphi_1^+)^{-2}
\log^{-a}\!\left( \frac{B}{\varphi_1^{+}}\right)
w^{-p},
\qquad \text{in } \Omega.
\]

From \eqref{ineigen}, it follows that the functions
\[
\underline{u}=m w
\quad\text{and}\quad
\overline{u}=M w
\]
are, respectively, a subsolution and a supersolution of \eqref{exeq6}, provided that
$m>0$ is sufficiently small and $M>1$ is sufficiently large.\\
\textbf{Uniqueness:} Observe that for each fixed $x\in\Omega$ the function 
\begin{equation}
t\in(0,\infty)\rightarrow \delta(x)^{-2}\log^{-a}\left(\frac{A}{\delta(x)}\right)t^{-p}
\end{equation}
is decreasing in $t.$ Thus, the operator considered here is proper and uniqueness follows. More precisely, 
let $u$ and $v$ be two distinct solutions of \ref{exeq6}. So without loss of generality we can assume that $\omega=\{x\in \Omega~|~v(x)<u(x)\}$ is non-empty open set and also $\omega\subset\subset \Omega.$ Thus in $\omega,$
\begin{equation*}
\left\{
\begin{aligned}
F(D^2u(x),Du(x),u(x),x)&= \delta(x)^{-2}\log^{-a}\left(\frac{A}{\delta(x)}\right)u^{-p}\\
&\leq\delta(x)^{-2}\log^{-a}\left(\frac{A}{\delta(x)}\right)v^{-p} \\
&=F(D^2v(x),Dv(x),v(x),x),
\end{aligned}
\right.
\end{equation*}   
and $v=u$ on $\partial\omega.$ Thus $u\leq v$ in $\omega,$ which contradicts the definition of $\omega.$ Thus $u\equiv v.$
\end{proof}
\begin{corollary}\label{coro3}
Let $C>0$, $p\geq 0$, $A>\operatorname{diam}(\Omega)$, and $a>1$.
Then there exists a constant $c>0$ such that any solution
$u$ of
\begin{equation}\label{ineq-coro}
\left\{
\begin{aligned}
F(D^2u,Du,u,x)
&\geq
\delta(x)^{-2}
\log^{-a}\!\left(\frac{A}{\delta(x)}\right)
u^{-p}
\quad &&\text{in } \Omega,\\
u&>0
\quad &&\text{in } \Omega,\\
u&=0
\quad &&\text{on } \partial\Omega,
\end{aligned}
\right.
\end{equation}
satisfies the boundary lower estimate
\[
u(x)\geq
c\,\log^{\frac{1-a}{1+p}}\!\left(\frac{A}{\delta(x)}\right),
\qquad \text{for all } x\in\Omega.
\]
\end{corollary}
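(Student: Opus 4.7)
The plan is to recycle the barrier constructed in the proof of Theorem~\ref{th4} and convert it, via the homogeneity condition (H1) and the comparison principle, into a subsolution whose scale matches the inequality in \eqref{ineq-coro}. Since $a>1$, the exponent $b=\tfrac{1-a}{1+p}$ is negative, so $w(x):=\log^{b}\!\bigl(B/\varphi_1^{+}(x)\bigr)$ is well defined, bounded, and vanishes on $\partial\Omega$ provided $B>A$ is chosen large enough (this is exactly the threshold used in Theorem~\ref{th4}).

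First I would quote from the proof of Theorem~\ref{th4} the upper estimate
\[
F(D^2w,Dw,w,x)\le C_2\,(\varphi_1^{+})^{-2}\log^{-a}\!\left(\frac{B}{\varphi_1^{+}}\right)w^{-p}\qquad\text{in }\Omega,
\]
which already encapsulates all the Pucci calculations. Using \eqref{ineigen} together with the elementary comparability, near $\partial\Omega$, of $\log(B/\varphi_1^{+})$ and $\log(A/\delta)$ up to multiplicative constants (and using $b<0$ to track the direction), this bound can be rewritten as
\[
F(D^2w,Dw,w,x)\le C_3\,\delta(x)^{-2}\log^{-a}\!\left(\frac{A}{\delta(x)}\right)w^{-p}\qquad\text{in }\Omega,
\]
for some constant $C_3>0$ depending on $\Omega,F,a,p$ and on the ratio $A/B$.

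Next I would invoke (H1) to rescale: for any $m>0$ satisfying $C_3\,m^{1+p}\le C$, the function $\underline{u}:=mw$ is a viscosity subsolution of
\[
\left\{
\begin{aligned}
F(D^2\underline{u},D\underline{u},\underline{u},x)
&=C\,\delta(x)^{-2}\log^{-a}\!\left(\frac{A}{\delta(x)}\right)\underline{u}^{-p}
&& \text{in }\Omega,\\
\underline{u}&=0 && \text{on }\partial\Omega,
\end{aligned}
\right.
\]
while $u$ is a supersolution of the same equation by hypothesis \eqref{ineq-coro}. Applying Proposition~\ref{prop1} with $\phi(x)=C\,\delta(x)^{-2}\log^{-a}(A/\delta(x))$ then yields $u\ge mw$ in $\Omega$, and a final appeal to \eqref{ineigen} (again using $b<0$) converts the right-hand side into $c\,\log^{(1-a)/(1+p)}(A/\delta(x))$.

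I do not anticipate a substantive obstacle, since the hard analytical work — the one-sided Pucci estimate for $w$ and the existence and boundary behaviour of the eigenfunction $\varphi_1^{+}$ — is already contained in Theorem~\ref{th4} and Theorem~\ref{distance}. The only subtlety is the bookkeeping that relates $\log(B/\varphi_1^{+})$ to $\log(A/\delta)$: because $b<0$, one must ensure that after absorbing the ambient constants into $C_3$ and $c$, the direction of the inequality in \eqref{ineq-coro} is preserved; but this is a routine consequence of \eqref{ineigen} and can always be arranged by enlarging $B$.
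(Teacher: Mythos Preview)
Your proposal is correct and follows precisely the route the paper intends: the corollary is stated in the paper without proof, immediately after Theorem~\ref{th4}, because it is meant to be read as the direct consequence you describe --- take the barrier $w=\log^{b}(B/\varphi_1^{+})$ whose upper Pucci estimate was derived there, rescale via (H1), and compare using Proposition~\ref{prop1}. The bookkeeping you flag (direction of inequalities under $b<0$ when passing between $\log(B/\varphi_1^{+})$ and $\log(A/\delta)$) is exactly the only point requiring care, and you handle it correctly.
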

\section{Main Results}

This section is devoted to the study of the existence and non-existence of positive solutions to problem \eqref{main}. 
The analysis is divided into several cases, depending on the relationships among the exponents $p$, $q$, $r$, and $s$.

Throughout this section, we repeatedly use the qualitative properties of solutions to \eqref{main} established in \eqref{prop3}. 
In addition, depending on the interplay between the exponents, Proposition~\ref{prop4} will play a crucial role in the construction of suitable sub- and supersolutions.

We begin by establishing non-existence results for \eqref{main}. 
These results rely on a comparison argument and on the non-existence criterion provided in Proposition~\ref{th1}.
\begin{theorem}[Non-existence]\label{nonexis1}
Let $p,s\geq 0$ and $q,r>0$ be such that at least one of the following conditions holds:
\begin{enumerate}
    \item $r\,\displaystyle \min\left\{1,\frac{2-q}{1+p}\right\}\geq 2$;
    
    \item $q\,\displaystyle \min\left\{1,\frac{2-r}{1+s}\right\}\geq 2$;
    
    \item $p>\max\{1,r-1\}$, \quad $2r>(1-s)(1+p)$, \quad and \quad $q(1+p-r)>(1+p)(1+s)$;
    
    \item $s>\max\{1,q-1\}$, \quad $2q>(1-p)(1+s)$, \quad and \quad $r(1+s-q)>(1+p)(1+s)$.
\end{enumerate}
Then the system \eqref{main} admits no positive solutions.
\end{theorem}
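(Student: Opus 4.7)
The plan is to assume, for contradiction, that $(u,v)$ is a positive solution of \eqref{main} and to refute each of the four hypotheses separately. The common starting point is the lower bound $u,v\ge c\,\delta(x)$ supplied by Proposition~\ref{prop3}, and the engine driving the proof is an iterative bootstrap of boundary estimates: a lower bound on one component is fed into the coefficient of the equation for the other, and the comparison principle (Proposition~\ref{prop1}) together with the barrier analysis of Proposition~\ref{prop4} (in its coefficient variant Remark~\ref{brem}) upgrades this into a sharp pointwise estimate. Whenever the singular weight in the resulting inequality reaches the threshold $\delta^{-2}$, Corollary~\ref{coro} supplies the contradiction.

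For hypothesis (1), I use $v\ge c\delta$ in the first equation to obtain
\[
F(D^2u,Du,u,x)=u^{-p}v^{-q}\le C\,\delta(x)^{-q}u^{-p}\quad\text{in }\Omega,
\]
so $u$ is a subsolution of $F(D^2z,\ldots)=C\delta^{-q}z^{-p}$. When $0<q<2$, Proposition~\ref{prop4} (through Remark~\ref{brem}) provides a supersolution of the same equation and Proposition~\ref{prop1} yields $u(x)\le C\,\delta(x)^{\alpha}$ with $\alpha=\min\{1,(2-q)/(1+p)\}$, the threshold case $p+q=1$ being absorbed by the logarithmic refinement of Theorem~\ref{th9}. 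Plugging the upper bound into the second equation yields
\[
F(D^2v,Dv,v,x)=u^{-r}v^{-s}\ge c\,\delta(x)^{-r\alpha}v^{-s}\quad\text{in }\Omega,
\]
so $v$ is a supersolution of an equation whose singular weight has exponent $r\alpha\ge 2$ by hypothesis; Corollary~\ref{coro} then contradicts the existence of $v$. Hypothesis (2) follows by the symmetric exchange $(u,p,q)\leftrightarrow(v,s,r)$.

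Hypotheses (3) and (4) require iterating the bootstrap a second time. For (3): starting from $u\ge c\delta$, the second equation gives only the trivial upper bound $v\le\|v\|_{L^\infty}$ (equivalently $\gamma_0=0$) when $r\ge 2$, but feeding this into the first equation still produces $F(D^2u,\ldots)\ge c\,u^{-p}$, and a scaling analog of Proposition~\ref{prop4} with vanishing $\delta$-weight yields $u\ge c\delta^{\alpha_1}$ with $\alpha_1=2/(1+p)$, which is smaller than $1$ because hypothesis (3a) gives $p>1$. The condition $r<1+p$ from (3a) together with $2r>(1-s)(1+p)$ from (3b) then guarantees $r\alpha_1<2$ so Proposition~\ref{prop4} applies to the second equation and produces the genuine upper bound
\[
v(x)\le C\,\delta(x)^{\gamma_1},\qquad \gamma_1=\frac{2-r\alpha_1}{1+s}=\frac{2(1+p-r)}{(1+p)(1+s)}.
\]
Substituting this back into the first equation gives $F(D^2u,\ldots)\ge c\,\delta(x)^{-q\gamma_1}u^{-p}$, and a direct computation shows that condition (3c), $q(1+p-r)>(1+p)(1+s)$, is exactly $q\gamma_1>2$; Corollary~\ref{coro} then contradicts the existence of $u$. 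When $r<2$ one may start the iteration with a nontrivial $\gamma_0=(2-r)/(1+s)$, but the same algebra shows that after finitely many steps the iterate $\gamma_n$ exceeds $2/q$. Hypothesis (4) is proved symmetrically.

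The main obstacle I expect is the bookkeeping of boundary exponents along the iteration: one must verify at every intermediate step that the singular exponent remains strictly below $2$ so that Proposition~\ref{prop4} (rather than its non-existence counterpart Corollary~\ref{coro}) is what is applied, and that the various $\min$'s in the definitions of $\alpha_n$ and $\gamma_n$ collapse into precisely the algebraic conditions (3) and (4). Threshold subcases in which an exponent meets $1$ or $2$ need to be handled using the logarithmic refinements of Theorem~\ref{th9}, Proposition~\ref{prop11}, Theorem~\ref{th4}, and Corollary~\ref{coro3}; once these borderline situations are absorbed, every case funnels into a single application of Corollary~\ref{coro} to the appropriate equation of the system.
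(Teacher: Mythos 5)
Your bootstrap strategy — feeding one component's boundary estimate into the other equation and terminating with Corollary~\ref{coro} — is indeed the paper's strategy, and your treatment of the non-threshold subcases of hypotheses (1)/(2) and the first pass of hypothesis (3) matches what the paper does. However there are two concrete gaps.

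First, in hypothesis (1) the threshold subcase $p+q=1$ is not actually handled. The bound $u(x)\le C\,\delta(x)$ that your scheme would produce with $\alpha=\min\{1,(2-q)/(1+p)\}=1$ is \emph{false} there: Proposition~\ref{prop4}(ii) only gives $u\le c\,\delta\,\log^{1/(1+p)}(A/\delta)$, and the logarithmic factor is unbounded as $\delta\to0$. Feeding the correct bound into the second equation yields $F(D^2v,\ldots)\ge c\,\delta^{-r}\log^{-r/(1+p)}(A/\delta)\,v^{-s}$ with $r\ge2$; when $r=2$ (which is compatible with the integrability criterion of Theorem~\ref{th1} because $p<1$) there is no immediate contradiction. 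The paper closes this case with a genuine further chain: the integrability constraint forces $r=2$, Corollary~\ref{coro3} (not Theorem~\ref{th9}, which you cite) gives a \emph{logarithmic lower} bound on $v$, this is fed back into the first equation, the logarithmic weight there is dominated by $\delta^{-a}$ for any $0<a<1-p$, Proposition~\ref{prop4}(i) then gives $u\le c\,\delta$ after all, and only then does Corollary~\ref{coro} with $r=2$ produce the contradiction. Your one-line remark that the threshold is "absorbed by the logarithmic refinement" does not supply this chain of implications, and as written your estimate $u\le C\delta^\alpha$ at $p+q=1$ is simply incorrect.

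Second, your treatment of hypothesis (3) introduces an unnecessary and incompletely justified branch. The paper starts the bootstrap in all cases from the trivial bound $v\le M:=\max_{\overline\Omega}v$ (so effectively $\gamma_0=0$), which gives $F(D^2u,\ldots)\ge M^{-q}u^{-p}$ and, via the $q=0$ variant of Proposition~\ref{prop4}(iii) (using $p>1$ from (3a)), the lower bound $u\ge c\,\delta^{2/(1+p)}$. From there, a single additional cycle — exactly the one you describe for $r\ge2$ — gives $\gamma_1=2(1+p-r)/\bigl((1+p)(1+s)\bigr)$ and the contradiction via (3c). There is no need to distinguish $r\ge2$ from $r<2$. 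Your claim that for $r<2$ one should instead start from $\gamma_0=(2-r)/(1+s)$ and that "after finitely many steps the iterate $\gamma_n$ exceeds $2/q$" is an unproved assertion: you would need to show the iteration $\alpha_{n+1}=(2-q\gamma_n)/(1+p)$, $\gamma_{n+1}=(2-r\alpha_{n+1})/(1+s)$ is well-defined at each step (exponents strictly between the required thresholds) and eventually escapes, none of which you verify. The paper sidesteps this entirely; you should do the same.
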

\begin{proof}
Since the system \eqref{main} is invariant under the transformation
\[
(u,v,p,q,r,s)\longmapsto (v,u,s,r,q,p),
\]
it suffices to prove cases \textbf{(1)} and \textbf{(3)}.

\medskip
\noindent\textbf{Case (1).}
Observe that the hypothesis in this case implies $0<q<2$. We further divide the proof into the following three subcases:
\[
p+q<1,\qquad p+q=1,\qquad p+q>1.
\]

Assume that $(u,v)$ is a solution of the system \eqref{main}. By Proposition \eqref{prop3}, there exists $c>0$ such that the estimate \eqref{exeq} holds.

\medskip
\noindent\textbf{Subcase $p+q<1$.}
From hypothesis \textbf{(1)} we deduce that $r\geq 2$. Using the estimate \eqref{exeq} in the first equation of the system \eqref{main}, we obtain
\begin{equation}\label{neq22}
\left\{
\begin{aligned}
F(D^2u,Du,u,x) &\leq c_{1}\,\delta(x)^{-q}u^{-p}
    && \text{in }\Omega,\\
u &>0 && \text{in }\Omega,\\
u &=0 && \text{on }\partial\Omega.
\end{aligned}
\right.
\end{equation}
for some $c_{1}>0$. By Proposition \eqref{prop4}(i), we deduce that
\[
u(x)\leq c_{2}\,\delta(x)\quad \text{in }\Omega,
\]
for some constant $c_{2}>0$. Using this estimate in the second equation of \eqref{main}, we obtain
\begin{equation}
\left\{
\begin{aligned}
F(D^2v,Dv,v,x) &\geq c_{3}\,\delta(x)^{-r}v^{-s}
    && \text{in }\Omega,\\
v &>0 && \text{in }\Omega,\\
v &=0 && \text{on }\partial\Omega,
\end{aligned}
\right.
\end{equation}
where $c_{3}>0$. By Corollary \eqref{coro}, this is impossible since $r\geq 2$.

\medskip
\noindent\textbf{Subcase $p+q=1$.}
From condition \textbf{(1)}, it follows that $r\geq 2$. As in the previous cases, we observe that $u$ satisfies \eqref{neq22} for some constant $c_{1}>0$. By Proposition \eqref{prop4}(ii), there exists a constant $c_{6}>0$ such that
\[
u(x)\leq c_{6}\,\delta(x)\,
\log^{\frac{1}{1+p}}\!\left(\frac{A}{\delta(x)}\right)
\quad \text{in }\Omega,
\]
for some $A>3\,\mathrm{diam}(\Omega)$. Substituting this estimate into the second equation of \eqref{main}, we obtain
\begin{equation}\label{neq24}
\left\{
\begin{aligned}
F(D^2v,Dv,v,x) 
&\geq c_{7}\,\delta(x)^{-r}
\log^{-\frac{r}{1+p}}\!\left(\frac{A}{\delta(x)}\right)v^{-s}
\quad \text{in }\Omega,\\
v&>0 \quad \text{in }\Omega,\\
v&=0 \quad \text{on }\partial\Omega,
\end{aligned}
\right.
\end{equation}
where $c_{7}$ is a positive constant. From Theorem \eqref{th1} it follows that
\[\int_{0}^{1} t^{1-r} \log ^{-\frac{r}{1+p}}\left(\frac{A}{t}\right) d t<\infty
\]
Since $r\geq 2$, the above integrability condition forces $r=2$. 
Now, applying \eqref{neq24} with $r=2$ and Corollary \eqref{coro3}, 
there exists a constant $c_{8}>0$ such that
\begin{equation}\label{neq25}
v(x)\geq c_{8}\,
\log^{\frac{p-1}{(1+p)(1+s)}}\!\left(\frac{A}{\delta(x)}\right)
\quad \text{in }\Omega .
\end{equation}

Substituting the estimate \eqref{neq25} into the first equation of the system \eqref{main},
we obtain
\begin{equation}\label{neq26}
\left\{
\begin{aligned}
F(D^2u,Du,u,x) 
&\leq c_{9}\,
\log^{\frac{q(1-p)}{(1+p)(1+s)}}\!\left(\frac{A}{\delta(x)}\right)
u^{-p}
\quad \text{in }\Omega,\\
u&>0 \quad \text{in }\Omega,\\
u&=0 \quad \text{on }\partial\Omega,
\end{aligned}
\right.
\end{equation}
for some constant $c_{9}>0$. Fix $0<a<1-p$. Then, by \eqref{neq26}, 
there exists a constant $c_{10}>0$ such that $u$ satisfies
\begin{equation}
\left\{
\begin{aligned}
F(D^2u,Du,u,x) 
&\leq c_{10}\,\delta(x)^{-a}u^{-p}
\quad \text{in }\Omega,\\
u&>0 \quad \text{in }\Omega,\\
u&=0 \quad \text{on }\partial\Omega.
\end{aligned}
\right.
\end{equation}
By Proposition \eqref{prop4}(i) (since $a+p<1$), we obtain
\[
u(x)\leq c_{11}\,\delta(x)\qquad \text{in }\Omega,
\]
for some constant $c_{11}>0$. Using this estimate in the second equation of
\eqref{main}, and recalling that $r=2$, we finally obtain
\begin{equation}
\left\{
\begin{aligned}
F(D^2v,Dv,v,x)
&\geq c_{12}\,\delta(x)^{-2}v^{-s}
\quad \text{in }\Omega,\\
v&>0 \quad \text{in }\Omega,\\
v&=0 \quad \text{on }\partial\Omega,
\end{aligned}
\right.
\end{equation}
which is impossible according to Corollary \eqref{coro}. Therefore, the system \eqref{main} has no solutions.\\
\medskip
\noindent\textbf{Subcase $p+q>1$.}
From hypothesis {\rm(1)} we also have
\[
\frac{r(2-q)}{1+p}\geq 2.
\]
As in the previous cases, $u$ satisfies \eqref{neq22}. Hence, by Proposition
\eqref{prop4}(iii), there exists a constant $c_{4}>0$ such that
\[
u(x)\leq c_{4}\,\delta(x)^{\frac{2-q}{1+p}}
\qquad \text{in }\Omega.
\]

Using this estimate in the second equation of system \eqref{main}, we obtain
\begin{equation}
\left\{
\begin{aligned}
F(D^2v,Dv,v,x)
&\geq c_{5}\,\delta(x)^{-\frac{r(2-q)}{1+p}}\,v^{-s}
\quad \text{in }\Omega,\\
v&>0 \quad \text{in }\Omega,\\
v&=0 \quad \text{on }\partial\Omega,
\end{aligned}
\right.
\end{equation}
for some $c_{5}>0$, which is impossible in view of Corollary \eqref{coro},
since
\[
\frac{r(2-q)}{1+p}\geq 2.
\]
\textbf{Case(3).} Suppose that the system \eqref{main} admits a solution $(u,v)$ and set
\[
M=\max_{x\in\overline{\Omega}} v(x).
\]
From the first equation of \eqref{main}, we obtain
\begin{equation*}
\left\{
\begin{aligned}
F(D^2u,Du,u,x) &\geq c_{1}u^{-p} \quad \text{in } \Omega,\\
u &>0 \quad \text{in } \Omega,\\
u &=0 \quad \text{on } \partial\Omega,
\end{aligned}
\right.
\end{equation*}
where $c_{1}=M^{-q}>0$. By Proposition \eqref{prop4}(iii), there exists $c_{2}>0$ such that
\[
u(x)\geq c_{2}\,\delta(x)^{\frac{2}{1+p}} \quad \text{in } \Omega.
\]
Combining this estimate with the second equation of \eqref{main}, we obtain
\begin{equation*}
\left\{
\begin{aligned}
F(D^2v,Dv,v,x) &\leq c_{3}\,\delta(x)^{-\frac{2r}{1+p}}\,v^{-s} \quad \text{in } \Omega,\\
v &>0 \quad \text{in } \Omega,\\
v &=0 \quad \text{on } \partial\Omega,
\end{aligned}
\right.
\end{equation*}
for some $c_{3}>0$. Since
\[
\frac{2r}{1+p}+s>1,
\]
another application of Proposition \eqref{prop4}(iii) yields the estimate
\[
v(x)\leq c_{4}\,\delta(x)^{\frac{2(1+p-r)}{(1+p)(1+s)}} \quad \text{in } \Omega,
\]
for some $c_{4}>0$. Substituting this bound into the first equation of \eqref{main}, we find that there exists $c_{5}>0$ such that
\begin{equation*}
\left\{
\begin{aligned}
F(D^2u,Du,u,x) &\geq c_{5}\,\delta(x)^{-\frac{2q(1+p-r)}{(1+p)(1+s)}}\,u^{-p}
\quad \text{in } \Omega,\\
u &>0 \quad \text{in } \Omega,\\
u &=0 \quad \text{on } \partial\Omega.
\end{aligned}
\right.
\end{equation*}
This contradicts Corollary \eqref{coro}, since
\[
q(1+p-r)>(1+p)(1+s).
\]
Hence, the system \eqref{main} admits no solutions.
\end{proof}
In order to state our next theorem related to existence of solution to \eqref{main}, we need the following notations:
\[
\alpha := (p+q)\min\left\{1,\frac{2-r}{1+s}\right\},
\qquad
\beta := (r+s)\min\left\{1,\frac{2-q}{1+p}\right\}.
\]
As we have mentioned in the introduction, the existence of solutions will be established by employing Schauder fixed theorem in an appropriate cone. These cones have been defined with the help of the boundary behaviour of solution. These boundary behaviour are consequence of Proposition \ref{prop4} and H\"{o}pf lemma. Further more in order to apply the Schauder fixed point theorem we need to prove that the solution operator is compact which will be obtained as a consequence of results of Section 4\cite{felmer2012existence}.

\begin{theorem}[Existence]\label{th13}
Let $p,s\geq0$ and $q,r>0$ be such that $(1+p)(1+s)-qr>0$, and assume one of the following conditions holds:
\begin{enumerate}
    \item $\alpha \leq 1$ and $r < 2$;
    \item $\beta \leq 1$ and $q < 2$;
    \item $p,s \geq 1$ and $q,r < 2$.
\end{enumerate}
Then the system \eqref{main} has at least one positive solution.
\end{theorem}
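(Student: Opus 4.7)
\emph{Strategy.} The plan is to apply the Schauder fixed point theorem to a decoupling operator acting on a cone of function pairs with prescribed boundary behaviour, following the blueprint of \cite{ghergu2010lane} but using Proposition~\ref{prop4} (and its variants in Theorem~\ref{th9}, Proposition~\ref{prop11}, Theorem~\ref{th4}, and Remark~\ref{brem}) in place of the Laplacian distance-function estimates. The structural assumption $(1+p)(1+s)-qr>0$ is a nondegeneracy requirement ensuring that the self-referential boundary-exponent system for $(u,v)$ admits a consistent solution.

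\emph{Cone and operator.} For each of the three cases I would first choose exponents $\sigma_1,\sigma_2\in(0,1]$ (or the appropriate logarithmic variants in the critical subcases $p\sigma_1+q\sigma_2=1$ or $r\sigma_1+s\sigma_2=1$) describing the anticipated boundary behaviour of a solution, and then define
\[
K=\Big\{(\phi,\psi)\in C(\overline{\Omega})\times C(\overline{\Omega})\ :\ c_{1}\delta^{\sigma_1}\le\phi\le c_{2}\delta^{\sigma_1},\ c_{1}\delta^{\sigma_2}\le\psi\le c_{2}\delta^{\sigma_2}\Big\},
\]
with $0<c_1<c_2$ to be fixed later. For $(\phi,\psi)\in K$ I would set $T(\phi,\psi):=(u,v)$, where $u$ and $v$ are the unique solutions of
\[
F(D^2u,Du,u,x)=\phi^{-p}\psi^{-q},\qquad F(D^2v,Dv,v,x)=\phi^{-r}\psi^{-s},
\]
with zero boundary data. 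Existence and uniqueness for these frozen problems are granted by Remark~\ref{brem} applied to Proposition~\ref{prop4}, since the right-hand sides are comparable to $\delta(x)^{-(p\sigma_1+q\sigma_2)}$ and $\delta(x)^{-(r\sigma_1+s\sigma_2)}$. A fixed point of $T$ is, by construction, a positive viscosity solution of \eqref{main}.

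\emph{Main obstacle and closure.} The decisive step is proving the invariance $T(K)\subset K$. Applying Proposition~\ref{prop4} componentwise translates the boundary behaviour of $T(\phi,\psi)$ into a system of inequalities in $(\sigma_1,\sigma_2)$ obtained from cases (i)--(iii) of that proposition applied with effective exponents $p\sigma_1+q\sigma_2$ and $r\sigma_1+s\sigma_2$. Requiring the output exponents to coincide with $\sigma_1,\sigma_2$ leads to a linear compatibility system whose solvability hinges precisely on $(1+p)(1+s)-qr>0$ together with one of (1)--(3): for instance in Case (1) one takes $\sigma_1=1$ and $\sigma_2=\min\{1,(2-r)/(1+s)\}$, and then $\alpha\le 1$ is exactly the inequality $p\sigma_1+q\sigma_2\le 1$ needed so that Proposition~\ref{prop4}(i) returns $u\asymp\delta$; Case (2) is symmetric, and Case (3) requires the full power-type branch of Proposition~\ref{prop4}(iii) for both components with $(1+p)(1+s)-qr>0$ guaranteeing positivity of the output exponents. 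The constants $c_1,c_2$ are then selected by tracking the multiplicative constants in Proposition~\ref{prop4} (and using Proposition~\ref{prop1} to compare with the barriers). Once invariance is in place, continuity of $T$ follows from the comparison principle of Proposition~\ref{prop1} and the stability of viscosity solutions, while compactness follows from interior $C^{1,\alpha}$ regularity and uniform boundary control from membership in $K$ (in the spirit of \cite{felmer2012existence,mallick2025regularity}). Schauder's theorem then delivers the required fixed point.
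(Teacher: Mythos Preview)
Your overall architecture (Schauder fixed point in a cone defined by boundary profiles, with Proposition~\ref{prop4} supplying the barriers and \cite{felmer2012existence} supplying compactness) matches the paper's. The genuine problem is your choice of decoupling. You freeze \emph{both} unknowns on the right-hand side, setting $T(\phi,\psi)=(u,v)$ with
\[
F(D^2u,Du,u,x)=\phi^{-p}\psi^{-q},\qquad F(D^2v,Dv,v,x)=\phi^{-r}\psi^{-s}.
\]
With this operator the invariance $T(K)\subset K$ cannot be closed in general under the stated hypotheses. Indeed, if $(\phi,\psi)\in K$ with $m_1\delta^{\sigma_1}\le\phi\le M_1\delta^{\sigma_1}$ and $m_2\delta^{\sigma_2}\le\psi\le M_2\delta^{\sigma_2}$, then by positive homogeneity the output $u$ lies between $M_1^{-p}M_2^{-q}c_1\delta^{\sigma_1}$ and $m_1^{-p}m_2^{-q}c_2\delta^{\sigma_1}$, and analogously for $v$. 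Forcing $(u,v)\in K$ yields the four inequalities
\[
m_1M_1^{p}M_2^{q}\le c_1,\quad M_1m_1^{p}m_2^{q}\ge c_2,\quad m_2M_1^{r}M_2^{s}\le k_1,\quad M_2m_1^{r}m_2^{s}\ge k_2,
\]
whose solvability (after taking logarithms) requires a positive vector in the cone $\{(I-A^2)\mathbf{x}\ge0\}$ with $A=\begin{pmatrix}p&q\\r&s\end{pmatrix}$. This fails as soon as $p\ge1$ or $s\ge1$; e.g.\ for $(p,q,r,s)=(0,1,1,2)$ one has $\alpha=1/3<1$, $r<2$, $(1+p)(1+s)-qr=2>0$ (so Case~(1) applies), yet $I-A^2=\begin{pmatrix}0&-2\\-2&-4\end{pmatrix}$ admits no positive solution. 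Your exponent matching works, but the multiplicative constants cannot be made self-consistent.

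The paper avoids this by using the \emph{semi}-decoupled operator
\[
F(D^2(Tu),D(Tu),Tu,x)=v^{-q}(Tu)^{-p},\qquad F(D^2(Tv),D(Tv),Tv,x)=u^{-r}(Tv)^{-s},
\]
so that each frozen problem is still of the form in Proposition~\ref{prop4} with the genuine exponents $p$ and $s$. The self-singular factors $(Tu)^{-p}$, $(Tv)^{-s}$ are then absorbed into the barrier construction rather than into the constant bookkeeping: after rescaling, $M_2^{q/(1+p)}Tu$ is a supersolution of the unit-coefficient problem, so the closure conditions read $M_1^{r/(1+s)}m_2\le c_1$, $c_2\le M_1m_2^{q/(1+p)}$, etc., and these are solvable precisely when $\dfrac{qr}{(1+p)(1+s)}<1$, i.e.\ under the hypothesis $(1+p)(1+s)-qr>0$. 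This is the step where that hypothesis actually enters, and it is why the partial (not full) freezing is essential.
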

\begin{proof}
As in the proof of non-existence, cases $1$ and $2$ are symmetric. Hence, we will provide the proof for case $1$ and case $3$.  

The proof of case $1$ is further divided into six subcases, depending on the inequalities satisfied by the exponents $p, q, r$ and $s$. More precisely, we consider:  
\begin{itemize}
    \item[(I)] $r+s>1$ and $\alpha = p + \frac{q(2-r)}{1+s} < 1$,
    \item[(II)] $r+s=1$ and $\alpha = p + q < 1$,
    \item[(III)] $r+s<1$ and $\alpha = p + q < 1$,
    \item[(IV)] $r+s<1$ and $\alpha = p + q = 1$,
    \item[(V)] $r+s>1$ and $\alpha = p + q \frac{2-r}{1+s} = 1$,
    \item[(VI)] $r+s=1$ and $\alpha = p + q = 1$.
\end{itemize}
\textbf{Subcase (I):} Consider the case 
\[
r+s>1 \quad \text{and} \quad \alpha = p + \frac{q(2-r)}{1+s} < 1.
\]

Since $\alpha < 1$, we can apply Proposition \ref{prop4}(i), which guarantees the existence of constants $0 < c_1 < 1 < c_2$ such that any subsolution $\underline{u}$ (resp. supersolution $\overline{u}$) of the problem 
\begin{equation}\label{eq8}
\left\{
\begin{aligned}
F(D^2u,Du,u,x) &= \delta(x)^{-\frac{q(2-r)}{1+s}} u^{-p} &&\text{in } \Omega,\\
u &= 0 &&\text{on } \partial\Omega,
\end{aligned}
\right.
\end{equation}
satisfies 
\begin{equation}\label{eq9}
\underline{u}(x) \leq c_2 \delta(x) \quad (\text{resp. } c_1 \delta(x) \leq \overline{u}(x)) \quad \text{for all } x \in \Omega.
\end{equation}
Since $r+s>1$, Proposition \ref{prop4}(iii) implies that any subsolution $\underline{v}$ (resp. supersolution $\overline{v}$) of 
\begin{equation}\label{eq10}
\left\{
\begin{aligned}
F(D^2v,Dv,v,x) &= \delta(x)^{-r} v^{-s} && \text{in } \Omega,\\
v &= 0 && \text{on } \partial\Omega,
\end{aligned}
\right.
\end{equation}
satisfies
\begin{equation}\label{eq11}
\underline{v}(x) \leq c_2 \delta(x)^{\frac{2-r}{1+s}} \quad (\text{resp. } \overline{v}(x) \geq c_1 \delta(x)^{\frac{2-r}{1+s}}) \quad \text{for all } x \in \Omega.
\end{equation}
Taking into account that $(1+p)(1+s)-qr>0$, we can choose constants $0 < m_i < 1 < M_i$ for $i=1,2$ such that
\begin{equation}\label{eq81}
\left\{
\begin{aligned}
&(i)\quad M_1^{\frac{r}{1+s}} m_2 \leq c_1 < c_2 \leq M_1 m_2^{\frac{q}{1+p}},\\[2mm]
&(ii)\quad M_2^{\frac{q}{1+p}} m_1 \leq c_1 < c_2 \leq M_2 m_1^{\frac{r}{1+s}}.
\end{aligned}
\right.
\end{equation}
Set
\[
\mathcal{A} = \Bigl\{ (u,v) \in C(\overline{\Omega}) \times C(\overline{\Omega}) :
m_1 \delta(x) \leq u(x) \leq M_1 \delta(x), \;\;
m_2 \delta(x)^{\frac{2-r}{1+s}} \leq v(x) \leq M_2 \delta(x)^{\frac{2-r}{1+s}} \;\;
\text{in } \Omega \Bigr\}.
\]
\textbf{Definition of $T$:} For any $(u,v)\in\mathcal{A}$ we define $(Tu,Tv)$ is the unique viscosity solution of the "de-coupled" system: 
    \begin{equation}\label{eq14}
\left\{
\begin{aligned}
F(D^2(Tu),D(Tu),Tu,x) &= v^{-q}(Tu)^{-p} &&\text{in } \Omega,\\
F(D^2(Tv),D(Tv),Tv,x) &= u^{-r}(Tv)^{-s} &&\text{in } \Omega,\\
Tu = Tv &= 0 &&\text{on } \partial\Omega,
\end{aligned}
\right.
\end{equation}
Before, we proceed, we prove the existence and uniqueness of solution of \eqref{eq14}. As the system of equations is decoupled, it is sufficient to prove the existence and uniqueness solution of each equation separately. We consider the second equation, the result concerning for the first equation follows in the same way.\\
\textbf{Existence of Solution: } Consider second equation from \eqref{eq14} and denote $w=Tv;$
\begin{equation}\label{Tv}
\left\{
\begin{aligned}
F(D^2w,Dw,w,x) &= u^{-r}w^{-s} &&\text{in } \Omega,\\
w &= 0 &&\text{on } \partial\Omega,
\end{aligned}
\right.
\end{equation}
As $(u,v)\in\mathcal{A}$ so the from definition of $\mathcal{A},$ we have:
\[m_1\delta(x)\leq u(x)\leq M_1\delta(x).\]
Using this inequality in \eqref{Tv}, we get
\begin{equation}\label{auxi}
M_1^{-r}\delta(x)^{-r}w^{-s}\leq F(D^2w,Dw,w,x)\leq m_1^{-r}\delta(x)^{-r}w^{-s}.
\end{equation}
Furthermore, by noting that $r+s>1,$ we can construct sub and supersolution of \eqref{Tv} as in the case Proposition \ref{prop4}(iii). Thus the existence of solution of solution followed by applying Theorem 4.1\cite{crandall1992user}.
Noting that the uniqueness of the solution follows from Proposition~\ref{prop1}.\\
Thus the map $\mathcal{F}:\mathcal{A}\longrightarrow C(\Bar{\Omega})\times C(\Bar{\Omega})$: 
\[\mathcal{F}(u,v)=(Tu,Tv)~~~\text{for any}~~~ (u,v)\in\mathcal{A},\]
is well defined.
Our aim is to show that $\mathcal{F}$ has a fixed point in $\mathcal{A}$. This will be achieved by employing Schauder's Fixed Point Theorem once we establish the following:  
\begin{enumerate}
    \item $\mathcal{F}(\mathcal{A}) \subseteq \mathcal{A}$,
    \item $\mathcal{F}$ is completely continuous; that is, $\mathcal{F}$ is both compact and continuous.
\end{enumerate}
\textbf{Proof of $\mathcal{F}(\mathcal{A})\subseteq\mathcal{A}$:} \\

Let $(u,v) \in \mathcal{A}$. Then 
\[
v(x) \leq M_2 \delta(x)^{\frac{2-r}{1+s}} \quad \text{in } \Omega.
\]

From \eqref{eq14}, $Tu$ satisfies
\begin{equation*}
\left\{
\begin{aligned}
F(D^2(Tu),D(Tu),Tu,x) &\geq M_2^{-q} \delta(x)^{\frac{-q(2-r)}{1+s}} (Tu)^{-p} && \text{in } \Omega, \\
Tu &= 0 && \text{on } \partial\Omega.
\end{aligned}
\right.
\end{equation*}

Thus, $\overline{u} := M_2^{\frac{q}{1+p}} Tu$ is a supersolution of \eqref{eq8}. By \eqref{eq81} and \eqref{eq9}, we have
\[
Tu = M_2^{\frac{-q}{1+p}} \overline{u} \geq c_1 M_2^{\frac{-q}{1+p}} \delta(x) \geq m_1 \delta(x) \quad \text{in } \Omega.
\]

Also, note that for $(u,v) \in \mathcal{A}$, 
\[
v(x) \geq m_2 \delta(x)^{\frac{2-r}{1+s}} \quad \text{in } \Omega.
\]

From \eqref{eq14}, $Tu$ satisfies
\begin{equation*}
\left\{
\begin{aligned}
F(D^2(Tu),D(Tu),Tu,x) &\leq m_2^{-q} \delta(x)^{\frac{-q(2-r)}{1+s}} (Tu)^{-p} && \text{in } \Omega, \\
Tu &= 0 && \text{on } \partial\Omega.
\end{aligned}
\right.
\end{equation*}

Thus, $\underline{u} := m_2^{\frac{q}{1+p}} Tu$ is a subsolution of \eqref{eq8}. By \eqref{eq81} and \eqref{eq9}, we obtain
\[
Tu = m_2^{\frac{-q}{1+p}} \underline{u} \leq m_2^{\frac{-q}{1+p}} c_2 \delta(x) \leq M_1 \delta(x) \quad \text{in } \Omega.
\]

Hence,
\[
m_1 \delta(x) \leq Tu \leq M_1 \delta(x) \quad \text{in } \Omega.
\]

Similarly, one can show
\[
m_2 \delta(x)^{\frac{2-r}{1+s}} \leq Tv \leq M_2 \delta(x)^{\frac{2-r}{1+s}} \quad \text{in } \Omega.
\]

Therefore,
\[
\mathcal{F}(\mathcal{A}) \subseteq \mathcal{A}.
\]
\textbf{Proof that $\mathcal{F}$ is completely continuous:} \\

\textbf{Compactness:} From the above observation, $\mathcal{F}(u,v) \in \mathcal{A}$ for any $(u,v) \in \mathcal{A}$. Thus, we have 
\[
\|Tu\|_{L^{\infty}(\Omega)}, \, \|Tv\|_{L^{\infty}(\Omega)} \leq C.
\]

Furthermore, in view of the definition of $\mathcal{A}$, we can find $a>0$ such that
\begin{equation*}
\left\{
\begin{aligned}
0 \leq F(D^2(Tu), D(Tu), Tu, x) &\leq c \, \delta(x)^{-a} \quad \text{in } \Omega, \\
0 \leq F(D^2(Tv), D(Tv), Tv, x) &\leq c \, \delta(x)^{-a} \quad \text{in } \Omega.
\end{aligned}
\right.
\end{equation*}

By using Theorem 9 in \cite{felmer2012existence}, it follows that
\[
Tu, Tv \in {C}^{0,\gamma}(\overline{\Omega}), \quad 0<\gamma<1.
\]

Since ${C}^{0,\gamma}(\overline{\Omega})$ is compactly embedded in ${C}(\overline{\Omega})$, we conclude that $\mathcal{F}$ is compact. This argument will be used repeatedly in the subsequent steps. \\

\textbf{Continuity:} Let $(u_n, v_n) \subset \mathcal{A}$ be such that 
\[
u_n \longrightarrow u \quad \text{and} \quad v_n \longrightarrow v \quad \text{in } {C}(\overline{\Omega}) \quad \text{as } n \longrightarrow \infty.
\]

Using the fact that $\mathcal{F}$ is compact, there exists $(U,V) \in \mathcal{A}$ such that, up to a subsequence,
\[
Tu_n \longrightarrow U, \quad Tv_n \longrightarrow V \quad \text{in } {C}(\overline{\Omega}) \quad \text{as } n \longrightarrow \infty.
\]
By using the stability result for viscosity solutions [Section 6\cite{crandall1992user}], we find that 
\begin{equation*}
\left\{
\begin{aligned}
F(D^2U, DU, U, x) &= v^{-q} U^{-p} &&\text{in } \Omega, \\
F(D^2V, DV, V, x) &= u^{-r} V^{-s} &&\text{in } \Omega, \\
U = V &= 0 &&\text{on } \partial\Omega.
\end{aligned}
\right.
\end{equation*}

By uniqueness of \eqref{eq14}, it follows that $Tu = U$ and $Tv = V$. Hence,
\[
Tu_n \longrightarrow Tu, \quad Tv_n \longrightarrow Tv \quad \text{in } {C}(\overline{\Omega}) \quad \text{as } n \longrightarrow \infty.
\]

Therefore, $\mathcal{F}$ is continuous. By Schauder's Fixed Point Theorem, there exists $(u,v) \in \mathcal{A}$ such that 
\[
\mathcal{F}(u,v) = (u,v).
\]

Consequently, $(u,v)$ is a solution of system \ref{main}.\\
\textbf{Subcase (II):} $r+s=1$ and $\alpha=p+q<1.$

Using Proposition \ref{prop4}(i)-(ii), there exist constants $0 < c_1 < 1 < c_2$ and $0 < a < 1$ such that the following hold:

Any subsolution $\underline{u}$ of the problem
\begin{equation}\label{exeq50}
\left\{
\begin{aligned}
F(D^2 u, Du, u, x) &= \delta(x)^{-q} u^{-p} && \text{in } \Omega,\\
u &= 0 && \text{on } \partial \Omega,
\end{aligned}
\right.
\end{equation}
satisfies
\begin{equation}\label{exeq51}
\underline{u}(x) \leq c_2 \, \delta(x) \quad \text{for } x \in \Omega,
\end{equation}

and any supersolution $\overline{u}$ of the problem
\begin{equation}\label{exeq52}
\left\{
\begin{aligned}
F(D^2 u, Du, u, x) &= \delta(x)^{-q(1-a)} u^{-p} && \text{in } \Omega,\\
u &= 0 && \text{on } \partial \Omega,
\end{aligned}
\right.
\end{equation}
satisfies
\begin{equation}\label{exeq53}
\overline{u}(x) \geq c_1 \, \delta(x) \quad \text{for } x \in \Omega.
\end{equation}

Similarly, any subsolution $\underline{v}$ (resp. supersolution $\overline{v}$) of the problem
\begin{equation}\label{exeq54}
\left\{
\begin{aligned}
F(D^2 v, Dv, v, x) &= \delta(x)^{-r} v^{-s} && \text{in } \Omega,\\
v &= 0 && \text{on } \partial \Omega,
\end{aligned}
\right.
\end{equation}
satisfies
\begin{equation}\label{exeq55}
\underline{v}(x) \leq c_2 \, \delta(x) \log^{\frac{1}{1+s}}\left(\frac{A}{\delta(x)}\right), \quad
\Big(\text{resp. } c_1 \, \delta(x) \log^{\frac{1}{1+s}}\left(\frac{A}{\delta(x)}\right) \leq \overline{v}(x)\Big), \quad x \in \Omega.
\end{equation}
We can take $A$ large enough such that 
\[
\log^{\frac{1}{1+s}}\left(\frac{A}{\delta(x)}\right) \geq 1,
\] 
and consequently, we have
\begin{equation}\label{exeq56}
\overline{v}(x) \geq c_1 \, \delta(x).
\end{equation}

If we show that 
\[
\log^{\frac{1}{1+s}}\left(\frac{A}{\delta(x)}\right) \leq \delta(x)^{-a} \quad \text{for some } a,
\] 
then it follows that
\begin{equation}\label{exeq57}
\underline{v}(x) \leq c_2 \, \delta(x)^{1-a}.
\end{equation}

To see this, consider the function 
\[
f(x) = x^a - \log(x).
\] 
By elementary calculus, $f$ attains its minimum at 
\[
x_0 = \left(\frac{1}{a}\right)^{\frac{1}{a}},
\] 
with 
\[
f(x_0) = \frac{1}{a} (1 + \log(a)) \geq 0 \quad \text{for } a > \frac{1}{e}.
\] 

Therefore, there exists $a \in \left(\frac{1}{e}, 1\right)$ such that 
\[
\log^{\frac{1}{1+s}}\left(\frac{A}{\delta(x)}\right) \leq \delta(x)^{-a},
\] 
and hence,
\[
\underline{v}(x) \leq c_2 \, \delta(x)^{1-a}.
\]
 We now define
 $$\mathcal{A}=\left\{(u,v)\in C(\overline{\Omega})\times C(\overline{\Omega}): m_1\delta(x)\leq u(x)\leq M_1\delta(x)~~\text{and}~~m_2\delta(x)\leq v(x)\leq M_2\delta(x)^{1-a}~~\text{in}~~\Omega\right\}$$
where $0<m_i<1<M_i~(i=1,2)$ satisfy \eqref{eq81} and 
 \begin{equation}\label{exeq35}
    m_2\big[\text{diam}(\Omega)\big]^a<M_2 
 \end{equation}
  For any $(u,v)\in\mathcal{A},$ we consider $(Tu,Tv)$ the unique solution of the decoupled system 
    \begin{equation}\label{c2eq14}
\left\{
\begin{aligned}
F(D^2(Tu),D(Tu),Tu,x) &= v^{-q}(Tu)^{-p} &&\text{in } \Omega,\\
F(D^2(Tv),D(Tv),Tv,x) &= u^{-r}(Tv)^{-s} &&\text{in } \Omega,\\
Tu = Tv &= 0 &&\text{on } \partial\Omega,
\end{aligned}
\right.
\end{equation}
Define 
\[
\mathcal{F}:\mathcal{A} \longrightarrow C(\overline{\Omega}) \times C(\overline{\Omega}) \quad \text{by} \quad \mathcal{F}(u,v) = (Tu,Tv) \quad \text{for any } (u,v) \in \mathcal{A}.
\]

Our aim is to show that $\mathcal{F}$ has a fixed point in $\mathcal{A}$. This will be achieved by employing Schauder's Fixed Point Theorem once we establish:

\begin{enumerate}
    \item $\mathcal{F}(\mathcal{A}) \subseteq \mathcal{A}$,
    \item $\mathcal{F}$ is completely continuous, that is, compact and continuous.
\end{enumerate}

\textbf{Step 1:} $\mathcal{F}(\mathcal{A}) \subseteq \mathcal{A}.$\\

Let $(u,v) \in \mathcal{A}.$ Then 
\[
v(x) \leq M_2 \, \delta(x)^{1-a} \quad \text{in } \Omega.
\]

From \eqref{c2eq14}, $Tu$ satisfies
\begin{equation*}
\left\{
\begin{aligned}
F(D^2(Tu), D(Tu), Tu, x) &\geq M_2^{-q} \, \delta(x)^{-q(1-a)} (Tu)^{-p} && \text{in } \Omega,\\
Tu &= 0 && \text{on } \partial \Omega.
\end{aligned}
\right.
\end{equation*}
Thus, 
\[
\overline{u} := M_2^{\frac{q}{1+p}} Tu
\]
is a supersolution of \eqref{exeq52}. By \eqref{exeq53} and \eqref{eq81}, we obtain
\[
Tu = M_2^{\frac{-q}{1+p}} \overline{u} \geq c_1 M_2^{\frac{-q}{1+p}} \delta(x) \geq m_1 \delta(x) \quad \text{in } \Omega.
\]

Similarly, from $v(x) \geq m_2 \delta(x) \quad \text{in } \Omega$ and the definition of $Tu$, we deduce that
\begin{equation*}
\left\{
\begin{aligned}
F(D^2(Tu), D(Tu), Tu, x) &\leq m_2^{-q} \delta(x)^{-q} (Tu)^{-p} && \text{in } \Omega,\\
Tu &= 0 && \text{on } \partial \Omega.
\end{aligned}
\right.
\end{equation*}
Thus, 
\[
\underline{u} := m_2^{\frac{q}{1+p}} Tu
\]
is a subsolution of problem \eqref{exeq50}. Hence, from \eqref{exeq51} and \eqref{eq81}, we obtain
\[
Tu = m_2^{\frac{-q}{1+p}} \underline{u} \leq c_2 m_2^{\frac{-q}{1+p}} \delta(x) \leq M_1 \delta(x) \quad \text{in } \Omega.
\]

Therefore, we have proved that $Tu$ satisfies
\[
m_1 \delta(x) \leq Tu \leq M_1 \delta(x) \quad \text{in } \Omega.
\]

In a similar manner, we can construct 
\[
\overline{v} := M_1^{\frac{r}{1+s}} Tv
\]
as a supersolution of \eqref{exeq54}. Using \eqref{exeq56} and \eqref{eq81}, we obtain
\[
Tv = M_1^{\frac{-r}{1+s}} \overline{v} \geq c_1 \delta(x) M_1^{\frac{-r}{1+s}} \geq m_2 \delta(x) \quad \text{in } \Omega.
\]
and 
\[
\underline{v} := m_1^{\frac{r}{1+s}} Tv
\]
is a subsolution of the problem \eqref{exeq54}. By using \eqref{exeq57} and \eqref{eq81}, we obtain
\[
Tv = m_1^{\frac{-r}{1+s}} \underline{v} \leq m_1^{\frac{-r}{1+s}} c_2 \delta(x)^{1-a}.
\]

Thus, we have proved that
\[
m_2 \delta(x) \leq Tv \leq M_2 \delta(x)^{1-a} \quad \text{in } \Omega.
\]

\textbf{Step 2:} $\mathcal{F}$ \emph{is compact and continuous}, can be proved as in Case (I).\\

\textbf{Subcase (III):} $r+s<1$ and $\alpha=p+q<1$:\\
By Proposition \ref{prop4} (i), there exist constants $0<c_1<1<c_2$ such that any subsolution $\underline{u}$ (resp. supersolution $\overline{u}$) of the problem
\begin{equation}
\left\{
\begin{aligned}
F(D^2 u, Du, u, x) &= \delta(x)^{-q} u^{-p} && \text{in } \Omega,\\
u &= 0 && \text{on } \partial \Omega,
\end{aligned}
\right.
\end{equation}
satisfies
\begin{equation}
\underline{u}(x) \leq c_2 \delta(x) \quad (\text{resp. } c_1 \delta(x) \leq \overline{u}(x)) \quad \text{for } x \in \Omega.
\end{equation}

Similarly, by Proposition \ref{prop4} (i), there exist constants $0<c_1<1<c_2$ such that any subsolution $\underline{v}$ (resp. supersolution $\overline{v}$) of the problem
\begin{equation}
\left\{
\begin{aligned}
F(D^2 v, Dv, v, x) &= \delta(x)^{-r} v^{-s} && \text{in } \Omega,\\
v &= 0 && \text{on } \partial \Omega,
\end{aligned}
\right.
\end{equation}
satisfies
\begin{equation}
\underline{v}(x) \leq c_2 \delta(x) \quad (\text{resp. } c_1 \delta(x) \leq \overline{v}(x)) \quad \text{for } x \in \Omega.
\end{equation}

Set
\[
\mathcal{A} = \Bigl\{ (u,v) \in C(\overline{\Omega}) \times C(\overline{\Omega}) : m_1 \delta(x) \leq u(x) \leq M_1 \delta(x) \text{ and } m_2 \delta(x) \leq v(x) \leq M_2 \delta(x) \text{ in } \Omega \Bigr\},
\]
where $0 < m_i < 1 < M_i$ $(i=1,2)$ satisfy \eqref{eq81}.
The remaining part of this subcase can be derived in a similar way as in above subcases (I) and (II).\\
\textbf{Subcase (IV):} $r+s<1$ and $\alpha=p+q=1.$ Interchange $u$ and $v$ in the initial system \eqref{main} and proceed as in subcase (II).\\
\textbf{Subcase (V):} $r+s>1$ and $\alpha = p + q\left(\frac{2-r}{1+s}\right) = 1.$  
Fix $0<a<1$ such that $ar+s>1.$  

From Proposition~\ref{prop4}\,(i), (iii), there exist constants $0<c_1<1<c_2$ such that the following hold.

Any subsolution $\underline{u}$ of the problem
\begin{equation*}
\left\{
\begin{aligned}
F(D^2u,Du,u,x) &= \delta(x)^{-\frac{q(2-ar)}{1+s}}u^{-p}, \quad u>0 && \text{in } \Omega,\\
u &= 0 && \text{on } \partial\Omega,
\end{aligned}
\right.
\end{equation*}
satisfies
\begin{equation*}
    \underline{u}(x) \leq c_2\,\delta(x)^a \quad \text{in } \Omega.
\end{equation*}

Similarly, any supersolution $\overline{u}$ of the problem
\begin{equation*}
\left\{
\begin{aligned}
F(D^2u,Du,u,x) &= \delta(x)^{-\frac{q(2-r)}{1+s}}u^{-p}, \quad u>0 && \text{in } \Omega,\\
u &= 0 && \text{on } \partial\Omega,
\end{aligned}
\right.
\end{equation*}
satisfies
\begin{equation*}
    \overline{u}(x) \geq c_1\,\delta(x) \quad \text{in } \Omega.
\end{equation*}

Next, any subsolution $\underline{v}$ of problem~\eqref{eq10} satisfies
\begin{equation*}
    \underline{v}(x) \leq c_2\,\delta(x)^{\frac{2-r}{1+s}} \quad \text{in } \Omega.
\end{equation*}

Moreover, any supersolution $\overline{v}$ of the problem
\begin{equation*}
\left\{
\begin{aligned}
F(D^2v,Dv,v,x) &= \delta(x)^{-ar}v^{-s}, \quad v>0 && \text{in } \Omega,\\
v &= 0 && \text{on } \partial\Omega,
\end{aligned}
\right.
\end{equation*}
satisfies
\begin{equation*}
    \overline{v}(x) \geq c_1\,\delta(x)^{\frac{2-ar}{1+s}} \quad \text{in } \Omega.
\end{equation*}

We now define the set
\begin{align*}
\mathcal{A} = \bigg\{(u,v)\in C(\overline{\Omega}) \times C(\overline{\Omega}) :\;&
m_1\,\delta(x) \leq u(x) \leq M_1\,\delta(x)^a,\\
& m_2\,\delta(x)^{\frac{2-ar}{1+s}} \leq v(x) \leq M_2\,\delta(x)^{\frac{2-r}{1+s}}
\quad \text{in } \Omega \bigg\},
\end{align*}
where $0<m_i<1<M_i$ $(i=1,2)$ satisfy condition~\eqref{eq81}, with constants $c_1$ and $c_2$ as defined above, and
\[
m_1\,\big(\mathrm{diam}(\Omega)\big)^{1-a} < M_1,
\qquad
m_2\,\big(\mathrm{diam}(\Omega)\big)^{\frac{r(1-a)}{1+s}} < M_2.
\]
Remaining part of this can be proved in a similar way as in above subcases (I) and (II).\\

\textbf{Subcase (VI):} $r+s=1$ and $\alpha = p+q = 1.$  
We proceed in the same manner by considering the set
\[
\mathcal{A} = \left\{(u,v)\in C(\overline{\Omega}) \times C(\overline{\Omega}) :
\begin{aligned}
& m_1\,\delta(x) \leq u(x) \leq M_1\,\delta(x)^{1-a},\\
& m_2\,\delta(x) \leq v(x) \leq M_2\,\delta(x)^{1-a}
\end{aligned}
\quad \text{in } \Omega \right\},
\]
where $0<a<1$ is a fixed constant, and the constants $m_i, M_i$ $(i=1,2)$ satisfy condition~\eqref{eq81} for suitable constants $c_1, c_2>0.$ Moreover, we assume that
\[
m_i\,\big(\mathrm{diam}(\Omega)\big)^a < M_i, \qquad i=1,2.
\]


\textbf{Case (3).} Let 
\[
a = \frac{2(1+s-q)}{(1+p)(1+s)-qr}, \qquad b = \frac{2(1+p-r)}{(1+p)(1+s)-qr}.
\] 
Then we have
\[
(1+p)a + bq = 2, \qquad ar + (1+s)b = 2.
\]

Since $p+bq>1$ and $s+ar>1$, by Proposition \ref{prop4}(iii) we can find constants $0<c_1<1<c_2$ such that:  

Any subsolution $\underline{u}$ and any supersolution $\overline{u}$ of the problem
\begin{equation}
\left\{
\begin{aligned}
F(D^2 u, Du, u, x) &= \delta(x)^{-bq} u^{-p} && \text{in } \Omega,\\
u &= 0 && \text{on } \partial \Omega,
\end{aligned}
\right.
\end{equation}
satisfy
\[
\overline{u}(x) \geq c_1 \delta(x)^a, \qquad \underline{u}(x) \leq c_2 \delta(x)^a \quad \text{in } \Omega.
\]

Any subsolution $\underline{v}$ and any supersolution $\overline{v}$ of the problem
\begin{equation}
\left\{
\begin{aligned}
F(D^2 v, Dv, v, x) &= \delta(x)^{-ar} v^{-s} && \text{in } \Omega,\\
v &= 0 && \text{on } \partial \Omega,
\end{aligned}
\right.
\end{equation}
satisfy
\[
\overline{v}(x) \geq c_1 \delta(x)^b, \qquad \underline{v}(x) \leq c_2 \delta(x)^b \quad \text{in } \Omega.
\]

We now define
\[
\mathcal{A} = \Bigl\{ (u,v) \in C(\overline{\Omega}) \times C(\overline{\Omega}) : m_1 \delta(x)^a \leq u(x) \leq M_1 \delta(x)^a,~~ m_2 \delta(x)^b \leq v(x) \leq M_2 \delta(x)^b \text{ in } \Omega \Bigr\},
\]
where $0 < m_i < 1 < M_i$ $(i=1,2)$ satisfy \eqref{eq81}.
Remaining part of this can be proved in a similar way as in above subcases (I) and (II).\\

\end{proof}
\begin{theorem}[$C^1$-regularity]
Let $p, s \geq 0$ and $q, r > 0$ satisfy $(1+p)(1+s) - qr > 0$. Introduce
\[
\alpha = p + q \, \min\left\{1, \frac{2-r}{1+s} \right\}, \qquad 
\beta = r + s \, \min\left\{1, \frac{2-q}{1+p} \right\}.
\]
Then the following statements hold:
\begin{enumerate}
    \item System \ref{main} has a solution $(u,v)$ with $u \in C^1(\overline{\Omega})$ if and only if $\alpha < 1$ and $r < 2$.
    \item System \ref{main} has a solution $(u,v)$ with $v \in C^1(\overline{\Omega})$ if and only if $\beta < 1$ and $q < 2$.
    \item System \ref{main} has a solution $(u,v)$ with $u, v \in C^1(\overline{\Omega})$ if and only if $p + q < 1$ and $r + s < 1$.
\end{enumerate}
\end{theorem}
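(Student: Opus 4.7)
The plan is to prove each equivalence by combining the two--sided boundary estimate ``$u \sim \delta$'' (which is exactly what $C^1(\overline{\Omega})$ forces for a function vanishing on $\partial\Omega$) with the consistency conditions produced by iterating Proposition~\ref{prop4} (via Remark~\ref{brem}) through the two equations of the system. Throughout, for $f \in C(\overline{\Omega})$ positive in $\Omega$, I write $f \sim \delta^\sigma$ to mean $c\,\delta^\sigma \le f \le C\,\delta^\sigma$ in $\Omega$ for some $0<c<C$.

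For the necessity in (1), assume $(u,v)$ solves \eqref{main} with $u \in C^1(\overline{\Omega})$. Since $u$ vanishes on $\partial\Omega$ and $|Du|$ is bounded, $u(x) \le C\delta(x)$, while Proposition~\ref{prop3} gives $u(x) \ge c\delta(x)$; hence $u \sim \delta$. The second equation then reads $F(D^2v,Dv,v,x) = f(x)\,v^{-s}$ with $f(x) \sim \delta(x)^{-r}$. Corollary~\ref{coro} forces $r < 2$ (otherwise no positive $v$ vanishing on $\partial\Omega$ exists). With $r<2$, Remark~\ref{brem} applied to this equation yields
\[
v(x) \sim
\begin{cases}
\delta(x), & \text{if } r+s < 1,\\[2pt]
\delta(x)\,\log^{1/(1+s)}(A/\delta(x)), & \text{if } r+s = 1,\\[2pt]
\delta(x)^{(2-r)/(1+s)}, & \text{if } r+s > 1.
\end{cases}
\]
Feeding this back into the first equation, the right--hand side behaves like $\delta^{-q'}u^{-p}$ (up to a possible log factor) with $q' = q\min\{1,(2-r)/(1+s)\}$, so Remark~\ref{brem} forces
\[
u(x) \sim
\begin{cases}
\delta(x), & \text{if } p+q' < 1,\\[2pt]
\delta(x)\,\log^{1/(1+p)}(A/\delta(x)), & \text{if } p+q' = 1,\\[2pt]
\delta(x)^{(2-q')/(1+p)}, & \text{if } p+q' > 1.
\end{cases}
\]
The log and power cases both contradict $u \le C\delta$, so $\alpha = p + q' < 1$, which is the required condition.

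For the sufficiency in (1), given $\alpha<1$ and $r<2$, Theorem~\ref{th13}(1) produces a solution $(u,v)$ lying in the cone $\mathcal{A}$ built in the existence proof; in particular $u \sim \delta$ and $v$ has the boundary behavior listed above. Consequently the right--hand side $f_1(x) := u^{-p}(x)v^{-q}(x)$ of the first equation satisfies $0 < f_1(x) \le C\,\delta(x)^{-\alpha}$ with $\alpha<1$. The $C^1$--regularity up to the boundary then follows from the boundary regularity results for viscosity solutions of $F(D^2u,Du,u,x)=f$ with $f \in L^\infty_{\mathrm{loc}}(\Omega)$ and $|f|\le C\delta^{-\alpha}$, $\alpha<1$, as developed in \cite{felmer2012existence} and \cite{mallick2025regularity}. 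Part (2) follows from part (1) by the symmetry $(u,v,p,q,r,s)\mapsto(v,u,s,r,q,p)$.

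Part (3) combines the two. For necessity, both $u,v \in C^1(\overline{\Omega})$ force $u \sim \delta$ and $v \sim \delta$; substituting $v \sim \delta$ in the first equation Remark~\ref{brem} now demands $p+q<1$ (otherwise $u$ acquires a log or fractional-power behavior incompatible with $u \le C\delta$), and symmetrically $r+s<1$. Conversely, if $p+q<1$ and $r+s<1$, then $(2-r)/(1+s)\ge 1$ and $(2-q)/(1+p)\ge 1$, so $\alpha=p+q<1$ and $\beta=r+s<1$; parts (1) and (2) apply simultaneously to give a common solution with both $u$ and $v$ in $C^1(\overline{\Omega})$. The main technical obstacle is the sufficiency direction: upgrading the pointwise two--sided estimate $c\delta \le u \le C\delta$ (coming from the cone construction in Theorem~\ref{th13}) to full $C^1(\overline{\Omega})$ regularity under a singular source $f \in O(\delta^{-\alpha})$ with $\alpha<1$; this is precisely where the boundary regularity machinery from \cite{felmer2012existence,mallick2025regularity} is invoked.
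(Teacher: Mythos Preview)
Your approach is essentially the same as the paper's: for necessity you combine $u\le C\delta$ (from $C^1$) with Proposition~\ref{prop3} to get $u\sim\delta$, then iterate Proposition~\ref{prop4}/Remark~\ref{brem} through the two equations to force $\alpha<1$; for sufficiency you invoke the cone solution of Theorem~\ref{th13} and the boundary regularity of \cite{felmer2012existence}. The paper phrases the necessity contradiction as $\partial u/\partial n=-\infty$ rather than ``$u/\delta\to\infty$ near $\partial\Omega$'', but this is equivalent.

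There is, however, a genuine gap in the borderline case $r+s=1$. Here $v\sim\delta\log^{1/(1+s)}(A/\delta)$ gives $v^{-q}\sim\delta^{-q}\log^{-q/(1+s)}(A/\delta)$, and Remark~\ref{brem} does \emph{not} apply to the first equation, since its hypothesis is $c_1\delta^{-q}\le f\le c_2\delta^{-q}$ with no logarithmic factor. When $\alpha=p+q>1$ you can absorb the log into an arbitrarily small power of $\delta$ and still invoke Proposition~\ref{prop4}(iii), but at the double borderline $\alpha=p+q=1$ this trick fails: the paper handles it via Corollary~\ref{coro2} (when $q<1+s$) and Proposition~\ref{prop11} (when $q=1$, $s=0$), neither of which you invoke. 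Your parenthetical ``(up to a possible log factor)'' hides precisely the subcase that requires these extra tools. A smaller imprecision: in (3) sufficiency, ``parts (1) and (2) apply simultaneously'' is correct only because Theorem~\ref{th13} in Subcase~III produces a \emph{single} solution with $u,v\sim\delta$, so both regularity arguments apply to that one pair; this should be made explicit rather than left as an appeal to two a~priori different existence statements.
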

\begin{proof}
\textbf{(1).} Assume first that the system \ref{main} has a solution $(u,v)$ with $u \in C^1(\overline{\Omega})$.  
Then there exists $c>0$ such that 
\[
u(x) \leq c \, \delta(x) \quad \text{in } \Omega.
\]  
Using this estimate in the second equation of system \eqref{main}, we deduce that $v$ satisfies the elliptic inequality 
\begin{equation*}
\left\{
\begin{aligned}
F(D^2v, Dv, v, x) &\geq c_3 \delta(x)^{-r} v^{-s} &&\text{in } \Omega,\\
v &> 0 &&\text{in } \Omega,\\
v &= 0 &&\text{on } \partial \Omega,
\end{aligned}
\right.
\end{equation*}
for some constant $c_3 > 0$. By Theorem \ref{th1}, this implies $r < 2$.  

Now, we prove that $\alpha < 1$. Suppose, by contradiction, that $\alpha \geq 1$. We divide the argument into three cases.

\textbf{Case (i):} $r+s > 1$. Then 
\[
\alpha = p + \frac{q(2-r)}{1+s} \geq 1.
\]  
From Proposition \ref{prop3}, we have $u(x) \geq c \, \delta(x)$ in $\Omega$ for some $c>0$.  
Hence, $v$ satisfies
\begin{equation*}
\left\{
\begin{aligned}
F(D^2v, Dv, v, x) &\leq c_1 \delta(x)^{-r} v^{-s} &&\text{in } \Omega,\\
v &> 0 &&\text{in } \Omega,\\
v &= 0 &&\text{on } \partial \Omega,
\end{aligned}
\right.
\end{equation*}
where $c_1>0$. Since $r < 2$.  
 Since $r<2$, by Proposition \ref{prop4}(iii) we obtain
\[
v(x)\leq c_2\,\delta(x)^{\frac{2-r}{1+s}} \quad \text{in } \Omega,
\]
for some $c_2>0$. Using this estimate in the first equation of system \eqref{main}, we deduce that $u$ satisfies
\begin{equation}\label{eq37}
\left\{
\begin{aligned}
F(D^2u,Du,u,x) &\geq c_3\,\delta(x)^{-\frac{q(2-r)}{1+s}}u^{-p} &&\text{in } \Omega,\\
u &>0 &&\text{in } \Omega,\\
u &=0 &&\text{on } \partial\Omega,
\end{aligned}
\right.
\end{equation}
where $c_3>0$.  

If $\frac{q(2-r)}{1+s}\geq 2$, then by Corollary \ref{coro} the above inequality admits no solution, which is a contradiction. Hence, we may assume that
\[
\frac{q(2-r)}{1+s}<2.
\]

If $\alpha>1$, then by Proposition \ref{prop4}(iii) there exists $c_4>0$ such that
\[
u(x)\geq c_4\,\delta(x)^{\tau} \quad \text{in } \Omega,
\]
where
\[
\tau=\frac{2-\frac{q(2-r)}{1+s}}{1+p}\in (0,1).
\]

Recall that there exists $\varphi_1^+$ satisfying
\[
c\,\delta(x)\leq \varphi_1^+(x)\leq \delta(x) \quad \text{in } \Omega,
\]
for some $0<c<1$. Consequently,
\begin{equation}
u(x)\geq c_4\,\delta(x)^{\tau}\geq c_4\,\big(\varphi_1^+(x)\big)^{\tau}
\quad \text{in } \Omega.
\end{equation}

Fix a point $x_0\in \partial\Omega$, and let $n$ denote the outward unit normal vector to $\partial\Omega$ at $x_0$.
\begin{equation*}
\left\{
\begin{aligned}
    \frac{\partial u}{\partial n}(x_0)=\lim_{t \nearrow 0}\frac{u(x_0+tn)-u(x_0)}{t}
    &= \lim_{t \nearrow 0}\frac{u(x_0+tn)}{t}\\
    &\leq c_4\lim_{t \nearrow 0}\frac{\varphi_1^+(x_0+tn)-\varphi_1^+(x_0)}{t}(\varphi_1^+)^{\tau-1}(x_0+tn)\\
    &=c_4\frac{\partial\varphi_1^+}{\partial n}(x_0)\lim_{t \nearrow 0}(\varphi_1^+)^{\tau-1}(x_0+tn)\\
    &=-\infty. ~~~~~~~~~~~~~~~~~~\left (\text{as}~\frac{\partial\varphi_1^+}{\partial n}(x_0)<0\right)
\end{aligned}
\right.
\end{equation*}
Hence, $u\notin C^1(\overline{\Omega})$, which contradicts the hypothesis.

If $\alpha=1$, then from \eqref{eq37} and Proposition \ref{prop4}(ii) we obtain
\[
u(x)\geq c_5\,\delta(x)\log^{\frac{1}{1+p}}\!\left(\frac{A}{\delta(x)}\right)
\quad \text{in } \Omega,
\]
for some $c_5>0$.  

Recall that
\[
c\,\delta(x)\leq \varphi_1^+(x)\leq \delta(x) \quad \text{in } \Omega,
\]
for some $0<c<1$. Therefore,
\begin{equation}
u(x)\geq c_5\,\delta(x)\log^{\frac{1}{1+p}}\!\left(\frac{A}{\delta(x)}\right)
\geq c_6\,\varphi_1^+(x)\log^{\frac{1}{1+p}}\!\left(\frac{A}{\varphi_1^+(x)}\right),
\end{equation}
for some constant $c_6>0$.
Fix $x_0 \in \partial\Omega$, and let $n$ be the outer unit normal vector to $\partial\Omega$ at $x_0$.\\
\begin{equation*}
\left\{
\begin{aligned}
    \frac{\partial u}{\partial n}(x_0)
    &= \lim_{t \nearrow 0} \frac{u(x_0 + t n) - u(x_0)}{t} \\
    &= \lim_{t \nearrow 0} \frac{u(x_0 + t n)}{t} \\
    &\le c_6 \lim_{t \nearrow 0}
    \frac{\varphi_1^+(x_0 + t n) - \varphi_1^+}{t}
    \log^{\frac{1}{1+p}}\!\left( \frac{A}{\varphi_1^+(x_0 + t n)} \right) \\
    &= c_6 \frac{\partial \varphi_1^+}{\partial n}(x_0)
    \lim_{t \nearrow 0}
    \log^{\frac{1}{1+p}}\!\left( \frac{A}{\varphi_1^+(x_0 + t n)} \right) \\
    &= -\infty
    \qquad
    \left( \text{since } \frac{\partial \varphi_1^+}{\partial n}(x_0) < 0 \right).
\end{aligned}
\right.
\end{equation*}
Hence, $u\notin C^1(\overline{\Omega}).$ which contradicts the hypothesis.\\
\textbf{Case (ii):} $r+s<1$. Then $\alpha = p+q \ge 1$.
From Proposition~\ref{prop3}, we have $u(x) \ge c\,\delta(x)$ in $\Omega$, for some $c>0$.
Thus, $v$ satisfies
\begin{equation}
\left\{
\begin{aligned}
F(D^2 v, D v, v, x) &\le c_1 \delta(x)^{-r} v^{-s} \quad \text{in } \Omega,\\
v &> 0 \quad \text{in } \Omega,\\
v &= 0 \quad \text{on } \partial\Omega.
\end{aligned}
\right.
\end{equation}
By Proposition~\ref{prop4}(i), we find that $v(x) \le c_7 \delta(x)$ in $\Omega$, for some $c_7>0$.
Thus, $u$ satisfies
\begin{equation}
\left\{
\begin{aligned}
F(D^2 u, D u, u, x) &\ge c_8 \delta(x)^{-q} u^{-p} \quad \text{in } \Omega,\\
u &> 0 \quad \text{in } \Omega,\\
u &= 0 \quad \text{on } \partial\Omega.
\end{aligned}
\right.
\end{equation}
where $c_8>0$. From Corollary~\ref{coro}, it follows that $q<2$.\\
If $\alpha>1$, then by Proposition~\ref{prop4}(iii) we have
$u(x) \ge c_9 \delta(x)^{\tau}$ in $\Omega$, where
$\tau = \frac{2-q}{1+p} \in (0,1)$ and $c_9>0$.\\
Note that
\[
c\,\delta(x) \le \varphi_1^+(x) \le \delta(x)
\quad \text{for some } c>0.
\]
Then
\begin{equation}
u(x) \ge c_9 \delta(x)^{\tau}
\ge c_9 \bigl(\varphi_1^+(x)\bigr)^{\tau}
\quad \text{in } \Omega.
\end{equation}
Fix $x_0 \in \partial\Omega$, and let $n$ be the outer unit normal vector to
$\partial\Omega$ at $x_0$.\\
\begin{equation*}
\left\{
\begin{aligned}
\frac{\partial u}{\partial n}(x_0)
&= \lim_{t \nearrow 0} \frac{u(x_0 + t n) - u(x_0)}{t} \\
&= \lim_{t \nearrow 0} \frac{u(x_0 + t n)}{t} \\
&\le c_9 \lim_{t \nearrow 0}
\frac{\varphi_1^+(x_0 + t n) - \varphi_1^+(x_0)}{t}
\bigl(\varphi_1^+\bigr)^{\tau-1}(x_0 + t n) \\
&= c_9 \frac{\partial \varphi_1^+}{\partial n}(x_0)
\lim_{t \nearrow 0}
\bigl(\varphi_1^+\bigr)^{\tau-1}(x_0 + t n) \\
&= -\infty,
\qquad
\left( \text{since } \frac{\partial \varphi_1^+}{\partial n}(x_0) < 0 \right).
\end{aligned}
\right.
\end{equation*}
Hence, $u \notin C^1(\overline{\Omega})$, which contradicts the hypothesis.\\
If $\alpha = 1$, then we can carry out a similar calculation as in Case~(i) above.\\
\textbf{Case (iii):} $r+s=1$. Then $\alpha = p+q \ge 1$.
From Proposition~\ref{prop3}, we have $u(x) \ge c\,\delta(x)$ in $\Omega$, for some $c>0$.
Therefore, $v$ satisfies
\begin{equation}
\left\{
\begin{aligned}
F(D^2 v, D v, v, x) &\le c_1 \delta(x)^{-r} v^{-s} \quad \text{in } \Omega,\\
v &> 0 \quad \text{in } \Omega,\\
v &= 0 \quad \text{on } \partial\Omega.
\end{aligned}
\right.
\end{equation}
By Proposition~\ref{prop4}(ii), we obtain
\[
v(x) \le c_{10} \delta(x)
\log^{\frac{1}{1+s}}\!\left( \frac{A}{\delta(x)} \right)
\quad \text{in } \Omega.
\]
where $c_{10}>0$. It follows that $u$ satisfies
\begin{equation}\label{eq44}
\left\{
\begin{aligned}
F(D^2 u, D u, u, x)
&\ge c^{\star}_{10} \delta(x)^{-q}
\log^{\frac{-q}{1+s}}\!\left( \frac{A}{\delta(x)} \right)
u^{-p}
\quad \text{in } \Omega,\\
u &> 0 \quad \text{in } \Omega,\\
u &= 0 \quad \text{on } \partial\Omega.
\end{aligned}
\right.
\end{equation}
where $c^{\star}_{10}>0.$ From Corollary \ref{coro}, we have $q-b<2,$  where $b=\frac{q}{1+s}$.\\
If $\alpha=p+q>1$, we fix $0<b<\text{min}\{q,p+q-1\}$ and from \eqref{eq44} we have that $u$ satisfies 
\begin{equation}
\left\{
\begin{aligned}
F(D^2u,Du,u,x)& \geq c_{11}\delta(x)^{-(q-b)}u^{-p}~\text{in}~\Omega,\\
u&>0~\text{in}~\Omega,\\
u&=0~\text{on} ~\partial\Omega,
\end{aligned}
\right.
\end{equation}
for some $c_{11}>0$. Now, since $p+q-b>1$, from Proposition~\ref{prop4}(iii) we find that
\[
u(x) \ge c_{12} \delta(x)^{\frac{2-(q-b)}{1+p}}
\quad \text{in } \Omega,
\]
where $c_{12}>0$. Since
\[
0 < \frac{2-(q-b)}{1+p} < 1,
\]
proceeding as in the previous cases, we find that
\[
\frac{\partial u}{\partial n} = -\infty,
\]
which contradicts the hypothesis.\\
If $\alpha = p+q = 1$, that is, $p+q = r+s = 1$. First, if $q < 1+s$, that is,
$q \neq 1$ and $s \neq 0$, by \eqref{eq44} and Corollary~\ref{coro2} we deduce
\[
u(x) \ge c_{13} \delta(x)
\log^{\frac{1+s-q}{(1+p)(1+s)}}\!\left( \frac{A}{\delta(x)} \right)
\quad \text{in } \Omega,
\]
for some $c_{13}>0$. Proceeding as before, we obtain
\[
\frac{\partial u}{\partial n} = -\infty
\quad \text{on } \partial\Omega,
\]
which is again impossible.\\
If $q = 1$ and $s = 0$, then we apply Proposition~\ref{prop11} to obtain
\[
u(x) \ge c_{14} \delta(x)
\log\!\left[ \log\!\left( \frac{A}{\delta(x)} \right) \right]
\quad \text{in } \Omega,
\]
where $c_{14}>0$. This also leads to the same contradiction,
\[
\frac{\partial u}{\partial n} = -\infty
\quad \text{on } \partial\Omega.
\]
Thus, we have proved that if the system~(1) has a solution $(u, v)$ with
$u \in C^{1}(\overline{\Omega})$, then $\alpha < 1$ and $r < 2$.\\
Conversely, assume now that $\alpha < 1$ and $r < 2$. By Theorem~\ref{th13}(1)
(Cases~I, II, and~III), there exists a solution $(u, v)$ of~\eqref{main} such that
\[
u(x) \ge c\,\delta(x) \quad \text{in } \Omega,
\]
and
\[
v(x) \ge c\,\delta(x)
\quad \text{in } \Omega, \quad \text{if } r+s \le 1,
\]
or
\[
v(x) \ge c\,\delta(x)^{\frac{2-r}{1+s}}
\quad \text{in } \Omega, \quad \text{if } r+s > 1,
\]
for some $c>0$. Using the above estimates, we find that
\[
F(D^2 u, D u, u, x) = u^{-p} v^{-q}
\le C\,\delta(x)^{-\alpha}
\quad \text{in } \Omega,
\]
for some $C>0$. Now we can use the interior $C^{1,\alpha}_{\mathrm{loc}}(\Omega)$
regularity to conclude that
\[
u \in C^{1,\alpha}_{\mathrm{loc}}(\Omega)
\]
for some $\alpha$. Moreover, by adapting the proofs of Propositions~2 and~3~\cite{felmer2012existence},
we obtain Hölder continuity of the gradient of the solution near the boundary.
Thus, by combining the interior and boundary regularity, we find that
\[
u \in C^{1,1-\alpha}(\overline{\Omega}).
\]
\textbf{(3).} Assume that the system~\eqref{main} has a solution $(u, v)$ with
$u, v \in C^1(\overline{\Omega})$. Then there exists $c>0$ such that
$v(x) \le c\,\delta(x)$ in $\Omega$. Using this estimate in the first equation
of~\eqref{main}, we find that
\begin{equation*}
\left\{
\begin{aligned}
F(D^2 u, D u, u, x) &\ge C\,\delta(x)^{-q} u^{-p}
\quad \text{in } \Omega,\\
u &> 0 \quad \text{in } \Omega,\\
u &= 0 \quad \text{on } \partial\Omega,
\end{aligned}
\right.
\end{equation*}
where $C$ is a positive constant. From Corollary~\eqref{coro}, we have $q<2$.\\
If $p+q>1$, then by Proposition~\eqref{prop4}(iii) we obtain
$u(x) \ge c_1 \delta(x)^{\tau}$, where
$\tau = \frac{2-q}{1+p} \in (0,1)$ and $c_1>0$.\\
Note that
\[
c\,\delta(x) \le \varphi_1^+(x) \le \delta(x)
\quad \text{for some } c>0.
\]
Then
\begin{equation}
u(x) \ge c_1 \delta(x)^{\tau}
\ge c_1 \bigl(\varphi_1^+(x)\bigr)^{\tau}
\quad \text{in } \Omega.
\end{equation}
Fix $x_0 \in \partial\Omega$, and let $n$ be the outer unit normal vector to
$\partial\Omega$ at $x_0$.\\
\begin{equation*}
\left\{
\begin{aligned}
\frac{\partial u}{\partial n}(x_0)
&= \lim_{t \nearrow 0} \frac{u(x_0 + t n) - u(x_0)}{t} \\
&= \lim_{t \nearrow 0} \frac{u(x_0 + t n)}{t} \\
&\le c_1 \lim_{t \nearrow 0}
\frac{\varphi_1^+(x_0 + t n) - \varphi_1^+(x_0)}{t}
\bigl(\varphi_1^+\bigr)^{\tau-1}(x_0 + t n) \\
&= c_1 \frac{\partial \varphi_1^+}{\partial n}(x_0)
\lim_{t \nearrow 0} \bigl(\varphi_1^+\bigr)^{\tau-1}(x_0 + t n) \\
&= -\infty,
\qquad
\left( \text{since } \frac{\partial \varphi_1^+}{\partial n}(x_0) < 0 \right).
\end{aligned}
\right.
\end{equation*}
Hence, $u \notin C^1(\overline{\Omega})$, which contradicts the hypothesis.\\
If $p+q = 1$, then by Proposition~\eqref{prop4}(ii) we have
\[
u(x) \ge c_2 \delta(x) \log^{\frac{1}{1+p}}\!\left( \frac{A}{\delta(x)} \right), \quad c_2 > 0.
\]
Note that
\[
c \,\delta(x) \le \varphi_1^+(x) \le \delta(x)
\quad \text{for some } c>0.
\]
Then
\begin{equation}
u(x) \ge c_2 \delta(x) \log^{\frac{1}{1+p}}\!\left( \frac{A}{\delta(x)} \right)
\ge c_3 \varphi_1^+(x) \log^{\frac{1}{1+p}}\!\left( \frac{A}{\varphi_1^+(x)} \right)
\quad \text{in } \Omega,
\end{equation}
where $c_3 > 0$.  

Fix $x_0 \in \partial\Omega$, and let $n$ be the outer unit normal vector to
$\partial\Omega$ at $x_0$.\\
\begin{equation*}
\left\{
\begin{aligned}
\frac{\partial u}{\partial n}(x_0)
&= \lim_{t \nearrow 0} \frac{u(x_0 + t n) - u(x_0)}{t} \\
&= \lim_{t \nearrow 0} \frac{u(x_0 + t n)}{t} \\
&\le c_3 \lim_{t \nearrow 0}
\frac{\varphi_1^+(x_0 + t n) - \varphi_1^+(x_0)}{t}
\log^{\frac{1}{1+p}}\!\left( \frac{A}{\varphi_1^+(x_0 + t n)} \right) \\
&= c_3 \frac{\partial \varphi_1^+}{\partial n}(x_0)
\lim_{t \nearrow 0} \log^{\frac{1}{1+p}}\!\left( \frac{A}{\varphi_1^+(x_0 + t n)} \right) \\
&= -\infty,
\qquad
\left( \text{since } \frac{\partial \varphi_1^+}{\partial n}(x_0) < 0 \right).
\end{aligned}
\right.
\end{equation*}
Hence, $u \notin C^1(\overline{\Omega})$, which contradicts the hypothesis.\\
Thus, $p+q < 1$, and in a similar way we can obtain $r+s < 1$.\\
Assume now that $p+q < 1$ and $r+s < 1$. Then, by the Existence Theorem~\eqref{th13}(i) (Case~III), we have that~\eqref{main} has a solution $(u,v)$ such that
\[
u(x), v(x) \ge c\,\delta(x) \quad \text{in } \Omega, \quad \text{for some } c>0.
\]
This yields
\begin{equation*}
\left\{
\begin{aligned}
F(D^2 u, D u, u, x) &\le C\,\delta(x)^{-(p+q)} \quad \text{in } \Omega,\\
F(D^2 v, D v, v, x) &\le C\,\delta(x)^{-(r+s)} \quad \text{in } \Omega,
\end{aligned}
\right.
\end{equation*}
where $C>0$. Note that $(p+q),(r+s) < 1$, so again, using the interior regularity and boundary regularity as in Propositions~2 and~3~\cite{felmer2012existence}, we conclude that
\[
u, v \in C^1(\overline{\Omega}).
\]
\end{proof}
\begin{theorem}[Uniqueness]
Let $p,s \ge 0$ and $q,r > 0$ satisfy $(1+p)(1+s) - q r > 0$, and assume one of the following conditions holds:
\begin{enumerate}
    \item $p+q < 1$ and $r < 2$;
    \item $r+s < 1$ and $q < 2$.
\end{enumerate}
Then, the system~\eqref{main} has a unique solution.
\end{theorem}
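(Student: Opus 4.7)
The plan is to reduce to case~(1) via the symmetry $(u,v,p,q,r,s)\leftrightarrow (v,u,s,r,q,p)$, invoke Theorem~\ref{th13} for existence in this regime, and prove uniqueness by a bootstrap that exploits the subhomogeneity $(1+p)(1+s)>qr$. Given two solutions $(u_1,v_1)$ and $(u_2,v_2)$, I would introduce
\[
\Lambda^{*}:=\inf\bigl\{\Lambda>0 : u_2\le \Lambda u_1 \text{ and } v_2\le \Lambda v_1 \text{ in } \Omega\bigr\},
\]
and then establish (a) $\Lambda^{*}<\infty$ and (b) $\Lambda^{*}\le 1$; swapping $(u_1,v_1)\leftrightarrow (u_2,v_2)$ in the resulting inequality $u_2\le u_1$, $v_2\le v_1$ then forces $u_1\equiv u_2$ and $v_1\equiv v_2$.

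For (a), matching two-sided boundary rates for both $u_i$ and both $v_i$ are needed. Since $p+q<1$, Proposition~\ref{prop4}(i) combined with Remark~\ref{brem} gives $c\,\delta(x)\le u_i(x)\le C\,\delta(x)$; inserting this into the second equation reduces the study of $v_i$ to Remark~\ref{brem} applied to a problem with right-hand side comparable to $\delta^{-r}v^{-s}$, and the appropriate item of Proposition~\ref{prop4} then delivers the same two-sided rate (power, power-log, or linear, according to whether $r+s$ is $>,=,$ or $<1$) for both $v_1$ and $v_2$. Together with interior positivity this makes $u_2/u_1$ and $v_2/v_1$ bounded on $\overline{\Omega}$, hence $\Lambda^{*}<\infty$.

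For (b), suppose for contradiction $\Lambda^{*}>1$. Substituting $v_2\le \Lambda^{*}v_1$ into the first equation makes $u_2$ a viscosity supersolution of the auxiliary equation $F(D^{2}u,Du,u,x)=(\Lambda^{*})^{-q}v_1^{-q}u^{-p}$, while the $1$-homogeneity (H1) identifies $(\Lambda^{*})^{-q/(1+p)}u_1$ as a subsolution of the same equation; Proposition~\ref{prop1} then yields $u_2\ge (\Lambda^{*})^{-q/(1+p)}u_1$, and the analogous step performed on the second equation gives $v_2\ge (\Lambda^{*})^{-r/(1+s)}v_1$. Feeding these improved lower bounds back into the right-hand sides and repeating the scaling–comparison pair once more produces
\[
u_2\le (\Lambda^{*})^{\theta}u_1,\qquad v_2\le (\Lambda^{*})^{\theta}v_1,\qquad \theta:=\frac{qr}{(1+p)(1+s)}<1,
\]
so $(\Lambda^{*})^{\theta}<\Lambda^{*}$, contradicting the definition of $\Lambda^{*}$. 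The main obstacle I expect is not the algebra—which closes at once from $\theta<1$—but the careful justification at each bootstrap step that Proposition~\ref{prop1} is applicable to the auxiliary scalar equation with frozen coefficient $v_1^{-q}$ (resp.\ $u_1^{-r}$): positivity and continuity in $\Omega$, together with compatible vanishing boundary data of the sub- and supersolution under comparison, must be verified, and this is exactly what the matching two-sided boundary estimates from step~(a) secure.
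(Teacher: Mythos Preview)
Your proposal is correct and follows essentially the same strategy as the paper: establish matching two-sided boundary rates so that the optimal scaling constant is finite, then run a scaling--comparison bootstrap that terminates because $\theta=\frac{qr}{(1+p)(1+s)}<1$. The only noteworthy difference is economy: the paper defines $M=\inf\{A>1:Au_1\ge u_2\}$ in terms of $u$ alone, so it only needs the two-sided estimate $c\,\delta\le u_i\le C\,\delta$ (from $p+q<1$) and avoids the case split on $r+s$ for the $v_i$ rate; the inequality $v_1\le M^{r/(1+s)}v_2$ is then obtained directly from one application of Proposition~\ref{prop1}, and a second application returns $u_2\le M^{\theta}u_1$, closing the contradiction in two comparison steps rather than four.
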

\begin{proof}
\textbf{Case (1):} $p+q < 1$ and $r < 2$.\\
Let $(u_1, v_1)$ and $(u_2, v_2)$ be two solutions of the system~\eqref{main}. Then, by Proposition~\ref{prop3}, there exists $c_1 > 0$ such that
\begin{equation}\label{eq15}
u_i, v_i \ge c_1 \delta(x) \quad \text{in } \Omega, \quad i=1,2.
\end{equation}

Hence, $u_i$ satisfies
\begin{equation*}
\left\{
\begin{aligned}
F(D^2 u_i, D u_i, u_i, x) &\le c_2 \delta(x)^{-q} u_i^{-p} \quad \text{in } \Omega,\\
u_i &> 0 \quad \text{in } \Omega,\\
u_i &= 0 \quad \text{on } \partial\Omega,
\end{aligned}
\right.
\end{equation*}
for some $c_2 > 0$. By Proposition~\ref{prop4}(i) and~\eqref{eq15}, there exists $0 < c < 1$ such that
\begin{equation}\label{eq16}
c \delta(x) \le u_i(x) \le \frac{1}{c} \delta(x) \quad \text{in } \Omega, \quad i=1,2.
\end{equation}
This implies that we can find a constant $C>1$ such that $C u_1 \ge u_2$ and $C u_2 \ge u_1$ in $\Omega$.\\

We claim that $u_1 \ge u_2$ in $\Omega$. Suppose, on the contrary, that
\[
M = \inf \{ A > 1 : A u_1 \ge u_2 \text{ in } \Omega \}.
\]
By our assumption, $M > 1$. From $M u_1 \ge u_2$ in $\Omega$, it follows that
\[
F(D^2 v_2, D v_2, v_2, x) = u_2^{-r} v_2^{-s} \ge M^{-r} u_1^{-r} v_2^{-s} \quad \text{in } \Omega.
\]
Therefore, $v_1$ is a solution and $M^{\frac{r}{1+s}} v_2$ is a supersolution of
\begin{equation*}
\left\{
\begin{aligned}
F(D^2 w, D w, w, x) &= u_1^{-r} w^{-s} \quad \text{in } \Omega,\\
w &> 0 \quad \text{in } \Omega,\\
w &= 0 \quad \text{on } \partial\Omega.
\end{aligned}
\right.
\end{equation*}
By Proposition~\ref{prop1}, we obtain
\[
v_1 \le M^{\frac{r}{1+s}} v_2 \quad \text{in } \Omega.
\]
The above estimate yields
\[
F(D^2 u_1, D u_1, u_1, x) = u_1^{-p} v_1^{-q} \ge M^{-\frac{qr}{1+s}} u_1^{-p} v_2^{-q} \quad \text{in } \Omega.
\]
It follows that $u_2$ is a solution and $M^{\frac{qr}{(1+s)(1+p)}} u_1$ is a supersolution of
\begin{equation*}
\left\{
\begin{aligned}
F(D^2 w, D w, w, x) &= v_2^{-q} w^{-p} \quad \text{in } \Omega,\\
w &> 0 \quad \text{in } \Omega,\\
w &= 0 \quad \text{on } \partial\Omega.
\end{aligned}
\right.
\end{equation*}
By Proposition~\ref{prop1}, we obtain
\[
M^{\frac{qr}{(1+s)(1+p)}} u_1 \ge u_2 \quad \text{in } \Omega.
\]
Since $M > 1$ and $\frac{qr}{(1+s)(1+p)} < 1$, the above inequality contradicts the minimality of $M$. Hence, $u_1 \ge u_2$ in $\Omega$. Similarly, we deduce $u_1 \le u_2$ in $\Omega$. Therefore, $u_1 \equiv u_2$, which also yields $v_1 \equiv v_2$ (using the comparison principle). Thus, the system has a unique solution.\\

\textbf{Case (2):} $r+s < 1$ and $q < 2$.\\
Let $(u_1, v_1)$ and $(u_2, v_2)$ be two solutions of the system~\eqref{main}. Then, by Proposition~\ref{prop3}, there exists $c_1 > 0$ such that
\begin{equation}\label{c2eq15}
u_i, v_i \ge c_1 \delta(x) \quad \text{in } \Omega, \quad i=1,2.
\end{equation}

Hence, $v_i$ satisfies
\begin{equation*}
\left\{
\begin{aligned}
F(D^2 v_i, D v_i, v_i, x) &\le c_2 \delta(x)^{-r} v_i^{-s} \quad \text{in } \Omega,\\
v_i &> 0 \quad \text{in } \Omega,\\
v_i &= 0 \quad \text{on } \partial\Omega,
\end{aligned}
\right.
\end{equation*}
for some $c_2 > 0$. By Proposition~\ref{prop4}(i) and~\eqref{c2eq15}, there exists $0 < c < 1$ such that
\begin{equation}\label{c2eq16}
c \delta(x) \le v_i(x) \le \frac{1}{c} \delta(x) \quad \text{in } \Omega, \quad i=1,2.
\end{equation}
This implies that we can find a constant $C > 1$ such that $C v_1 \ge v_2$ and $C v_2 \ge v_1$ in $\Omega$.\\

We claim that $v_1 \ge v_2$ in $\Omega$. Suppose, on the contrary, that
\[
M = \inf \{ A > 1 : A v_1 \ge v_2 \text{ in } \Omega \}.
\]
By our assumption, $M > 1$. From $M v_1 \ge v_2$ in $\Omega$, it follows that
\[
F(D^2 u_2, D u_2, u_2, x) = u_2^{-p} v_2^{-q} \ge M^{-q} u_2^{-p} v_1^{-q} \quad \text{in } \Omega.
\]
Therefore, $u_1$ is a solution and $M^{\frac{q}{1+p}} u_2$ is a supersolution of
\begin{equation*}
\left\{
\begin{aligned}
F(D^2 w, D w, w, x) &= v_1^{-q} w^{-p} \quad \text{in } \Omega,\\
w &> 0 \quad \text{in } \Omega,\\
w &= 0 \quad \text{on } \partial\Omega.
\end{aligned}
\right.
\end{equation*}
By Proposition~\ref{prop1}, we obtain
\[
u_1 \le M^{\frac{q}{1+p}} u_2 \quad \text{in } \Omega.
\]
The above estimate yields
\[
F(D^2 v_1, D v_1, v_1, x) = u_1^{-r} v_1^{-s} \ge M^{-\frac{q r}{1+p}} u_2^{-r} v_1^{-s} \quad \text{in } \Omega.
\]
It follows that $v_2$ is a solution and $M^{\frac{q r}{(1+s)(1+p)}} v_1$ is a supersolution of
\begin{equation*}
\left\{
\begin{aligned}
F(D^2 w, D w, w, x) &= u_2^{-r} w^{-s} \quad \text{in } \Omega,\\
w &> 0 \quad \text{in } \Omega,\\
w &= 0 \quad \text{on } \partial\Omega.
\end{aligned}
\right.
\end{equation*}
By Proposition~\ref{prop1}, we obtain
\[
M^{\frac{q r}{(1+s)(1+p)}} v_1 \ge v_2 \quad \text{in } \Omega.
\]
Since $M > 1$ and $\frac{q r}{(1+s)(1+p)} < 1$, the above inequality contradicts the minimality of $M$. Hence, $v_1 \ge v_2$ in $\Omega$. Similarly, we deduce $v_1 \le v_2$ in $\Omega$. Therefore, $v_1 \equiv v_2$, which also yields $u_1 \equiv u_2$ (using the comparison principle). Thus, the system has a unique solution.

\end{proof}
\section{Acknowledgement}
Mohan Mallick is happy to acknowledge the support ANRF/ARGM/2025/002309/MTR by Anusandhan National Research Foundation. Ram Baran Verma would like to thank Anusandhan National Research Foundation and National Board of Higher Mathematics for supporting him by grant ANRF/ARGM/2025/002357/MTR and 02011/36/2025/NBHM/RP/9466 respectively.

\bibliography{Reference}
\bibliographystyle{abbrv}
\end{document}